\numberwithin{equation}{section}
\theoremstyle{plain}
\newtheorem{theorem}{Theorem}[section]
\theoremstyle{plain}
\newtheorem{definition}[theorem]{Definition}
\theoremstyle{plain}
\newtheorem{proposition}[theorem]{Proposition}
\theoremstyle{plain}
\newtheorem{lemma}[theorem]{Lemma}
\DeclareMathOperator{\Pf}{Pf}
\DeclareMathOperator{\Gr}{Gr}
\newcommand{\PP}{\mathbb P}
\title{Deformations of half-canonical Gorenstein curves in codimension four}
\author{Patience Ablett \and Stephen Coughlan}
\date{}
\begin{document}

\maketitle

\begin{abstract}
    Recent work of Ablett \cite{ablett2021halfcanonical} and Kapustka, Kapustka, Ranestad, Schenck, Stillman and Yuan \cite{kapustka2021quaternary} outlines a number of constructions for singular Gorenstein codimension four varieties. Earlier work of Coughlan, Go\l{}\c{e}biowski, Kapustka and Kapustka \cite{coughlan2016arithmetically} details a series of nonsingular Gorenstein codimension four constructions with different Betti tables. In this paper we exhibit a number of flat deformations between Gorenstein codimension four varieties in the same Hilbert scheme, realising many of the singular varieties as specialisations of the earlier nonsingular varieties.
\end{abstract}

\section{Introduction}

\subsection*{Gorenstein varieties in low codimension}
The explicit construction of Gorenstein varieties is well understood in codimension three or less. Reid \cite{reid2015gorenstein} gave a general structure theorem for codimension four, but constructing Gorenstein ideals of codimension four is still not fully understood. Thus the case of codimension four is still an area of active study. By understanding concrete examples of codimension four Gorenstein varieties and the relationships between these varieties, we can hope to understand more about the general case.

\begin{definition}
For a projective variety $X \subset \mathbb{P}^n$, we say $X$ is projectively Gorenstein, or alternatively arithmetically Gorenstein, if the coordinate ring $R$ of $X$ is Cohen--Macaulay and the canonical module $\omega_R \cong R(a)$, where $a$ represents a shift in the grading.
\end{definition}

We henceforth refer to such varieties as Gorenstein varieties. The condition on the canonical module leads to interesting symmetry in the free resolution for such varieties. In codimension two it is shown by Serre \cite{serre1960modules} that a variety is Gorenstein if and only if it is a complete intersection. In codimension three, the Buchsbaum--Eisenbud structure theorem \cite{10.2307/2373926} shows that all Gorenstein ideals of height 3 are given by the $2n \times 2n$ Pfaffians of a $(2n+1) \times (2n+1)$ skew-symmetric matrix, for $n \in \mathbb{Z}_{\geq 1}$.

\subsection*{Attempts to classify codimension 4 Gorenstein varieties}

Recent work on Gorenstein varieties has focused in particular on Gorenstein Calabi--Yau threefolds, henceforth GoCY threefolds. Studying GoCY threefolds in $\mathbb{P}^7$ is an interesting subcase of the Gorenstein codimension four problem. This was first studied by Bertin \cite{bertin2009examples}, and later by Coughlan, Go\l{}\c{e}biowski, Kapustka and Kapustka, who published a list of nonsingular GoCY threefolds in $\mathbb{P}^7$ \cite{coughlan2016arithmetically} and gave evidence to suggest this list may be complete. Schenck, Stillman and Yuan made further progress, listing every possible Betti table for Artinian Gorenstein algebras of codimension and regularity four \cite{schenck2020calabiyau}. The regularity four condition is a direct result of the Calabi--Yau condition. Note that since the coordinate ring $R$ for a Gorenstein variety $X$ is Cohen--Macaulay, cutting by a regular sequence leaves an Artinian ring with the same Betti table. Thus any higher dimensional codimension four Gorenstein variety whose coordinate ring has regularity four will have a Betti table given in \cite{schenck2020calabiyau}. For each Betti table of \cite{schenck2020calabiyau}, there are explicit descriptions of one or more families of varieties with that Betti table (see \cite{coughlan2016arithmetically}, \cite{schenck2020calabiyau}, \cite{ablett2021halfcanonical}, \cite{kapustka2021quaternary}). Further, the Betti tables of \cite{schenck2020calabiyau} may be split into two parts. Those appearing in Table 1 of \cite{schenck2020calabiyau} correspond to at least one family of nonsingular threefolds in $\mathbb{P}^7$ (see \cite{coughlan2016arithmetically}), and are therefore referred to as the CGKK Betti tables. Those appearing in Table 2 of \cite{schenck2020calabiyau} do not occur for nonsingular threefolds (see \cite{schenck2020calabiyau}), and we refer to these as the SSY Betti tables.
\renewcommand{\arraystretch}{1.3}
\begin{table}
\begin{center}
\begin{tabular}[c]{|l|l|l|}
\hline
 \multicolumn{3}{| c |}{Classifying curves by genus and degree}\\
 \hline
 \rule{0pt}{35pt}\pbox{2.5cm}{Degree\\} & \pbox{2.5cm}{Genus\\} & \pbox{2.5cm}{Corresponding \\ Betti table\\}\\
 \hline
 14 & 15 & CGKK 1 \\
 15 & 16 & CGKK 2, SSY 7, SSY 8 \\
 16 & 17 & CGKK 3, SSY 3, SSY 4, SSY 6 \\
 17 & 18 & CGKK 4, CGKK 5,6, SSY 2, SSY 5 \\
 18 & 19 & CGKK 7,8, SSY 1 \\
 19 & 20 & CGKK 9,10 \\
 20 & 21 & CGKK 11 \\
 \hline
\end{tabular}
\end{center}
\caption{A stratification of the Betti tables of \cite{schenck2020calabiyau}.}
\label{tab:table12}
\end{table}

\subsection*{The results of this paper}

It was shown in \cite{schenck2020calabiyau} that the SSY Betti tables cannot correspond to smooth GoCY threefolds. Moreover, almost all the families of curves given in \cite{ablett2021halfcanonical} are families of singular stable nodal curves, in contrast to the families of nonsingular threefolds of \cite{coughlan2016arithmetically}. See table \ref{tab:table12} for a description of the families of curves in terms of their genus and degree. It thus becomes an interesting question of whether the singular varieties in \cite{ablett2021halfcanonical}, \cite{kapustka2021quaternary} can be smoothed to one of the nonsingular varieties of \cite{coughlan2016arithmetically} in the same Hilbert scheme. We summarise our results in the following theorem, which answers this question in the affirmative. 
\begin{theorem}\label{bigtheorem}
Let $\beta$ be an SSY Betti table of \cite{schenck2020calabiyau}. Then
    \begin{enumerate}[\normalfont(i)]
        \item There is at least one family of curves with Betti table $\beta$. 
        \item Each family in part (i) contains a subfamily whose general element admits a flat deformation to a curve with a CGKK Betti table.
    \end{enumerate}
\end{theorem}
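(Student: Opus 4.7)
The plan is to proceed Betti-table-by-Betti-table through the SSY entries of Table \ref{tab:table12}, since there are only eight such tables and each demands a tailored construction. Part (i) is largely bookkeeping: for each SSY table I would cite the construction from \cite{ablett2021halfcanonical} or \cite{kapustka2021quaternary} that produces a family of (mostly singular, stable nodal) curves with the prescribed degree, genus and Betti diagram, filling any gap by adapting the unprojection and linkage machinery of those papers to the missing case. Writing the ideals in an explicit format (Pfaffians of a skew matrix, or a Kustin--Miller unprojection on top of a codimension three Pfaffian ideal) is essential, since the same format will drive part (ii).

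For part (ii) I would exhibit an explicit one-parameter flat family $\{X_t\}_{t \in \mathbb{A}^1}$ with $X_0$ a general member of the SSY subfamily identified in (i) and $X_t$, for $t \neq 0$, having a CGKK Betti table (chosen from the entries of Table \ref{tab:table12} in the same degree--genus row). The recipe I have in mind is to perturb the structural matrix of $X_0$ — the skew matrix governing the Pfaffian ideal, or the matrix encoding the unprojection equations — by a generic linear term weighted by $t$, keeping the format (and hence the Gorenstein property) intact. Flatness is verified by checking that the Hilbert polynomial of $X_t$ is constant in $t$; equivalently one can produce a Gr\"obner degeneration whose initial ideal is independent of $t$. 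The Betti diagram of the general fibre is then computed directly from the perturbed format and compared with the CGKK tables of \cite{coughlan2016arithmetically, schenck2020calabiyau}. In rows of Table \ref{tab:table12} with several CGKK candidates (most notably genus $18$ and $19$), the ``subfamily'' of the statement is the locus inside the SSY family on which such a perturbation exists; identifying it amounts to imposing that the syzygies responsible for the extra SSY Betti numbers can be absorbed by the deformation.

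The main obstacle is that codimension four Gorenstein ideals have no Buchsbaum--Eisenbud style structure theorem, so there is no a priori reason a perturbation of generators remains Gorenstein. The whole approach therefore depends on choosing perturbations that respect an explicit Gorenstein format (Pfaffian, tom/jerry unprojection, or Kustin--Miller), and on checking case-by-case that the format persists for $t \neq 0$. A secondary, more combinatorial issue is compatibility of Betti tables: by upper semi-continuity the Betti numbers of $X_0$ dominate those of $X_t$, so one has to verify that each SSY Betti table in Table \ref{tab:table12} is termwise at least the paired CGKK table, and, where several CGKK tables are available, pin down which component the generic $X_t$ lands on by evaluating on a single representative. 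Once these checks succeed for every SSY case, parts (i) and (ii) follow, proving Theorem \ref{bigtheorem}.
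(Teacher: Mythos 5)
You take essentially the same route as the paper: case-by-case through the eight SSY Betti tables, writing each family's ideal in an explicit Gorenstein format (Pfaffian, Cramer's rule, Tom/Jerry), perturbing the structural matrix by a one-parameter $t$, and certifying flatness by constancy of the Hilbert polynomial (i.e., fixed degree and dimension across fibres). Like you, the paper first passes to an explicit specialisation of each SSY family before the perturbation closes up into a CGKK format, and it verifies the general fibre's Betti table and format directly (in part via Magma) rather than via an abstract semicontinuity argument.
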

Refined statements listing all the families and subfamilies that we consider are in Figures \ref{deg15}, \ref{deg16}, \ref{fig:deg17} and \S\ref{sec5}.

By a result of Hartshorne \cite{hartshorne1966connectedness}, the Hilbert scheme parametrising subschemes of $\PP^n_k$ with fixed Hilbert function is connected. Hence Theorem \ref{bigtheorem} is unsurprising. On the other hand, our varieties and subsequent deformations are outlined concretely in terms of ideal generators, which does not follow from Hartshorne's result.

We can often express these generators using Pfaffians of matrices and other similar techniques, giving a simple description of both the ideal and the deformation. We repeatedly utilise the ``Cramer's rule'' format originally seen in \cite{kustin1980algebra} and explained in \cite[\S 2.8]{Papadakis2000Kustin--MillerComplexes}. Consider a $3 \times 4$ matrix $M$ and column vector $v$ of length 4 with entries in some polynomial ring, and a further parameter $s$. Then the equations obtained from \[Mv=0, \quad \bigwedge^3 M=sv\] define a variety. By $\bigwedge^3 M=sv$ we mean that $sv_i = (-1)^i\det M_{\widehat{i}}$, where $M_{\widehat{i}}$ is the $3 \times 3$ matrix obtained by deleting the $i$th column of $M$. Several of our families of varieties may be described by imposing certain conditions on $M$, $v$ and $s$. For example, when $M$ and $v$ have generic linear entries, and $s$ is also general and quadratic, we obtain a family of varieties with Betti table CGKK 4.

We explain our naming convention for the families described in \cite{coughlan2016arithmetically}, \cite{schenck2020calabiyau}, \cite{ablett2021halfcanonical} and \cite{kapustka2021quaternary}. For simplicity, in this paragraph we only consider families of curves. As observed in \cite{kapustka2021quaternary}, each Betti table is uniquely identified by its first row $b_{12}b_{23}b_{34}$. The symbol [$b_{12}b_{23}b_{34}$] refers to the locus $\mathcal{F}_{[b_{12}b_{23}b_{34}]}$ of quaternary quartics whose apolar ring has that Betti table. An additional letter inside the brackets (for example [300a]), specifies an irreducible subset (of the locus $\mathcal{F}_{[300]}$) as identified in \cite{kapustka2021quaternary}. We add decorations outside the brackets to denote the families discussed in this paper. For example, consider the family of curves [441b]a. The first row of the Betti table of a general curve in this family is 441. The b inside the brackets indicates that the family corresponds to a certain stratum of quartics, $\mathcal{F}_{[441b]}$. The a outside of the brackets is our primary identifier for this family of varieties.
Different families corresponding to the same stratum of quartics are differentiated by our primary identifier. For example, the two families corresponding to $\mathcal{F}_{[562]}$ are called [562]a and [562]b. A specialisation of a family is denoted by appending roman numerals to the name e.g.~[441b]a specialises to [441b]ai. We sometimes write [562]a to refer to the family of curves [562]a. An element of (the family of curves) [562]a is called a curve in [562]a or sometimes a curve of Type [562]a.

\begin{figure}[ht]
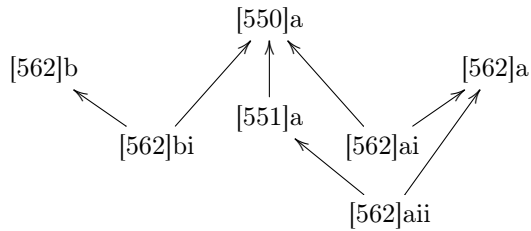

\[\xygraph{
!{<0cm,0cm>;<1cm,0cm>:<0cm,1.3cm>::}
!{(0,5)}*+{{\text{[550]a}}}="1"
!{(0,4)}*+{{\text{[551]a}}}="2"
!{(3,4.5)}*+{{\text{[562]a}}}="3"
!{(1.5,3.7)}*+{{\text{[562]ai}}}="4"
!{(1.6,3)}*+{{\text{[562]aii}}}="5"
!{(-3,4.5)}*+{{\text{[562]b}}}="6"
!{(-1.5,3.7)}*+{{\text{[562]bi}}}="7"
"2":"1"
"4":"1"
"7":"1"
"5":"2"
"4":"3"
"5":"3"
"7":"6"
}\]\caption{Degree 15 strata and incidences}
\label{deg15}
\end{figure}

Figure \ref{deg15} gives an overview of the results for degree 15. Similarly, figures \ref{deg16} and \ref{fig:deg17} describe the results for degrees 16 and 17 respectively. An arrow $A\to B$ means that we can exhibit a flat deformation whose central fibre is a general element of the stratum $A$ and whose general fibre is a general element of the stratum $B$.

It is interesting to compare and contrast our strata and their incidences with the stratification of the space of quartics in four variables in \cite{kapustka2021quaternary}. We do not claim to have a stratification of the Hilbert scheme of curves in $\PP^5$. Indeed, we know of other strata which are not included in the scope of this paper. Moreover, we have not counted parameters or moduli.

\subsection*{Lifting to higher dimensional varieties}
Using equations to describe our families of curves means that we can consider them more generally as families of codimension four varieties in a given $n$-dimensional projective space. These are referred to as liftings in \cite{kapustka2021quaternary}. In many situations, we can lift our smoothings of curves to smoothings of higher dimensional varieties too.
However, it may happen that the lifted variety does not have a smoothing, because the deformation is obstructed in some lower dimension.

We make this more precise in degree 15. Family [550]a lifts to dimension $6$, where it is the intersection of a cubic hypersurface with the $7$-dimensional projective cone over $\Gr(2,5)$. Any further lifting results in singularities where the cubic intersects the vertex of the cone.

Family [551]a lifts to at least dimension $5$ if we assume that $a_i,b_i$ are coordinates along with $x_i$ and $y$. Similarly, the special family [562]bi lifts to dimension $5$ if we assume that $L_0$, $L_1$, $L_2$, $L_3$ are coordinates. The smoothings of [551]a and [562]bi to [550]a both lift to dimension $5$ without obstruction.

On the other hand, family [562]a lifts to dimension $9$ if we assume that $a_{ij}$, $b_{ij}$ are coordinates along with $x_i,y_j$.
Since there is a deformation from [562]ai to [551]a and the maximal dimension of a lifting of [551]a is $6$, it follows that the smoothing of [562]ai must be obstructed in dimensions $\ge7$. Indeed, the deformation to [550]a collects all the terms involving $b_{ij}$ into the equation for the cubic hypersurface. The resulting $9$-dimensional variety of type [550]a is singular along the intersection of this cubic hypersurface with the $3$-dimensional vertex of the cone over $\Gr(2,5)$ with coordinates $b_{ij}$.

\subsection*{How we prove our results}

We prove Theorem \ref{bigtheorem} by constructing explicit flat families, typically over $\mathbb{A}^1$ but in one case over $\mathbb{A}^4$, whose special fibre is a singular variety with an SSY Betti table and whose general fibre is a variety in one of the families of Coughlan, Go\l{}\c{e}biowski, Kapustka and Kapustka. Explicit details of the deformations are outlined in sections \ref{sec2}, \ref{sec3}, \ref{sec4} and \ref{sec5}. Note that the deformations for the curves of Type~[551]a and [562]a are due to Jan Stevens.

To show that the deformations are indeed flat we utilise the fact that for a Noetherian integral scheme $T$ and a family $X \subset \mathbb{P}^n_T$, $X$ is flat over $T$ if the Hilbert polynomial is independent of $t \in T$ at every fibre $X_t$ (see \cite{MR0463157}, III.9, page 261). Thus in our case, where $t \in \mathbb{A}^1$ or occasionally $\mathbb{A}^4$, it is enough to check the dimension and corresponding Betti table of every fibre $X_t$, since the Hilbert polynomial can be directly calculated from this information. Table \ref{tab:table12} organises the Betti tables of \cite{schenck2020calabiyau} into strata according to the degree and genus of the stable curves in \cite{ablett2021halfcanonical}. Equidimensional varieties with Betti tables in the same strata will have the same Hilbert polynomial. We use Magma \cite{MR1484478} to check that our families have the correct dimension and Betti table.

In sections \ref{sec2}, \ref{sec3}, \ref{sec4} and \ref{sec5} we outline details of each family of varieties and its deformations. The different sections correspond to different degrees, with section \ref{sec2} describing the degree 15 varieties up to section \ref{sec5} describing the degree 18 varieties. Further each section is split into subsections, with each subsection describing a different construction in that degree and its possible deformations.

This work builds on the talk ``Gorenstein curves of codimension four'' given by Ablett at the MEGA 2022 conference, which discussed the results of \cite{ablett2021halfcanonical}. The original constructions in \cite{ablett2021halfcanonical} and the flat families in this paper make frequent use of the Magma \cite{MR1484478} computer algebra software. The code is available at
\begin{quote}\texttt{github.com/PatienceAblett/GorensteinCodim4Deformations}\end{quote}
and runs in the Magma online calculator.

Many of the varieties in the paper are defined using skew-symmetric matrices. We utilise the convention of only specifying the upper right triangular entries of the skew-symmetric matrix, since the rest of the entries are given by the skew-symmetry. We use ${\Pf_{\hat{i}}}M$ to denote the maximal Pfaffian of the submatrix of $M$ obtained by removing the $i$th row and column. When there is only one matrix used in a construction we drop the $M$ here for brevity.

\section{Deformations in degree 15}\label{sec2}
In this section we outline a series of deformations of degree 15 curves, exhibiting each of our singular curves in a flat family, whose general fibre is a nonsingular curve section of one of the constructions of Coughlan, Go\l{}\c{e}biowski, Kapustka and Kapustka. In our naming convention, family no.~2 of Coughlan, Go\l{}\c{e}biowski, Kapustka and Kapustka is called family [550]a. Note that we do not claim this is the only possible family of varieties with this Betti table. The family in \cite{ablett2021halfcanonical} with Betti table SSY 7 is called family [551]a. The two constructions with Betti table SSY 8, as seen in \cite{ablett2021halfcanonical} and \cite{kapustka2021quaternary}, are referred to as families [562]a and [562]b. Here the letter distinguishing between the two families is outside the brackets. Both families are associated to  the same irreducible locus of quaternary quartics from \cite{kapustka2021quaternary}. Note that the smoothing of a general curve in the Type~[551]a family is outlined in \cite{ablett2021halfcanonical}, as well as further details on all the families described in this section.

\subsection{Family~[550]a}
The family [550]a is originally outlined in \cite{coughlan2016arithmetically}, as $\Gr(2,5) \cap X_3 \cap \mathbb{P}^7$, where $X_3$ is a general cubic hypersurface. In other words, we take a linear $\mathbb{P}^7$ section of $\Gr(2,5)$ in its Pl\"{u}cker embedding and intersect with a cubic hypersurface. If we instead intersect $\Gr(2,5)$ with $\mathbb{P}^5$ to obtain a surface, this is the degree 5 del Pezzo surface. Subsequently intersecting with a cubic hypersurface defines a nonsingular curve with Betti table CGKK 2.
\begin{table}[h!]
    \begin{equation*}\begin{array}{c|c c c c c}
        & 0 & 1 & 2 & 3 & 4 \\ \hline
       0 & 1 & - & - & - & - \\
      1 & - & 5 & 5 & - &-\\
      2 &- & 1 & - & 1 & -\\
      3 &- & - & 5 & 5 & - \\
      4 & - & - & - & - & 1
    \end{array}
\end{equation*}
\caption*{Betti table CGKK 2~\cite{schenck2020calabiyau}}
\label{tab:table6}
\end{table}

\subsection{Family~[551]a}
We start by recasting family [551]a, which was originally constructed in  \cite{ablett2021halfcanonical}, as an example of the ``Cramer's rule'' format.

\begin{lemma}
Consider the following matrix $M$, vector $v$, and parameter $s$:
\begin{equation*}
    M = \begin{pmatrix}
    0 & a_0 & a_1 & a_2 \\
    0 & b_0 & b_1 & b_2 \\
    0 & c_0 & c_1 & c_2
    \end{pmatrix}, \quad
    v = \begin{pmatrix}
    F_2 \\
    x_0 \\
    x_1 \\
    x_2
    \end{pmatrix}, \quad 
    s=y.
\end{equation*} 
Here the $a_i$ and $b_i$ are linear forms, the $c_i$ are quadratic forms and $F_2$ is a cubic form in $x_0,x_1,x_2,x_3,x_4,y$. Let $C$ be the variety in $\PP^5$ defined by equations $Mv=0$, $\bigwedge^3 M=sv$. Then $C=C_1+C_2$ is a curve with Betti table SSY 7. We call this family [551]a.
\end{lemma}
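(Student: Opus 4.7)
The plan is to unwind the ``Cramer's rule'' format, use the vanishing first column of $M$ to separate $C$ into two components, and then verify the free resolution matches SSY~7 via an explicit computation.

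First, I would write out the seven defining equations. Because the first column of $M$ is zero, the three rows of $Mv=0$ do not involve $F_2$: they are the two quadrics $\sum_{i=0}^{2}a_ix_i$ and $\sum_{i=0}^{2}b_ix_i$, and the cubic $\sum_{i=0}^{2}c_ix_i$. For $\bigwedge^3M=sv$, each of the submatrices $M_{\widehat{2}},M_{\widehat{3}},M_{\widehat{4}}$ retains the zero column of $M$, so their determinants vanish; this collapses three of the four Cramer relations to the monomial equations $yx_0=yx_1=yx_2=0$. The remaining Cramer relation is the single quartic $yF_2=-\det M_{\widehat{1}}$, where $\det M_{\widehat{1}}$ is the determinant of the $3\times 3$ matrix whose rows are $(a_i),(b_i),(c_i)$.

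Second, the monomial equations $yx_i=0$ force the set-theoretic decomposition $C=C_1\cup C_2$ with $C_1\subset\{y=0\}\cong\mathbb{P}^4$ and $C_2\subset\{x_0=x_1=x_2=0\}\cong\mathbb{P}^2$. On $C_1$ the surviving generators are the two quadrics, the cubic, and the quartic $\det M_{\widehat{1}}$. On $C_2$ the $Mv=0$ equations vanish identically and only the restriction of $yF_2+\det M_{\widehat{1}}$ remains, giving a plane quartic. I would verify by a Bertini-type dimension count, for generic choices of $a_i,b_i,c_i,F_2$, that $C_1$ and $C_2$ are each one-dimensional with $\deg C_1+\deg C_2=15$, so that $C=C_1+C_2$ is a curve of the expected degree.

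The main step is then to confirm that the minimal free resolution of the homogeneous coordinate ring is the SSY~7 Betti table. As the authors indicate, I would do this in Magma for a generic specialisation of the coefficient data and then invoke semicontinuity of Betti numbers, which guarantees the same Betti table on an open subset of the parameter space. The genuine obstacle is not the set-theoretic decomposition (which is forced by the shape of $M$) but rather checking that the scheme defined by the seven generators above is actually arithmetically Gorenstein with the correct self-dual resolution, as opposed to a non-Cohen--Macaulay degeneration; the resolution computation settles both issues simultaneously. In principle one could avoid the computer algebra by writing down the self-dual Kustin--Miller/Papadakis complex of \cite{Papadakis2000Kustin--MillerComplexes} adapted to this zero-column format, but the direct Magma verification is much shorter.
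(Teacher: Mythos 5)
Your proposal is correct and follows essentially the same route as the paper: unwind the Cramer's-rule equations to see that the zero column of $M$ produces the monomial quadrics $x_0y,x_1y,x_2y$ which force the decomposition $C=C_1+C_2$ with $C_1\subset\{y=0\}\cong\PP^4$ (cut out by $\sum a_ix_i$, $\sum b_ix_i$, $\sum c_ix_i$ and the $3\times3$ determinant $H=\det M_{\widehat1}$, hence residual to the line $\PP^1_{\langle x_3,x_4\rangle}$ in a $(2,2,3)$ complete intersection) and $C_2\subset\{x_0=x_1=x_2=0\}\cong\PP^2$ the plane quartic $H+yF_2$, then verify the SSY~7 Betti table by Magma. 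Your added remarks about semicontinuity of Betti numbers to upgrade the single Magma check to an open locus, and the alternative of writing out the Kustin--Miller/Papadakis complex, are sensible refinements not spelled out in the paper but fully in its spirit.
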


\begin{table}[h!]
    \begin{equation*}\begin{array}{c|c c c c c}
        & 0 & 1 & 2 & 3 & 4 \\ \hline
       0 & 1 & - & - & - & - \\
      1 & - & 5 & 5 & 1 &-\\
      2 &- & 1 & 2 & 1 & -\\
      3 &- & 1 & 5 & 5 & - \\
      4 & - & - & - & - & 1
    \end{array}
\end{equation*}
\caption*{Betti table SSY 7~\cite{schenck2020calabiyau}}
\label{tab:table13}
\end{table}
We use the terminology $C_1 + C_2$ to denote the union of two curves which intersect transversely.

We describe the geometry of the Type~[551]a curve constructed in the above Lemma more precisely. Three of the five quadrics defining $C$ are 
\begin{equation*}
Q_0 = x_0y, \quad Q_1 = x_1y, \quad Q_2 = x_2y.
\end{equation*}
It follows that $C$ breaks into two pieces, $C_1 \subset \mathbb{P}^4_{\left<x_0,\dots,x_4\right>}$ and $C_2 \subset \mathbb{P}^2_{\left<x_3,x_4,y\right>}$. 

The three equations coming from the product $Mv=0$ are
\begin{equation*}
Q_3 = a_0x_0+a_1x_1+a_2x_2, \quad Q_4 = b_0x_0+b_1x_1+b_2x_2, \quad F_1 = c_0x_0+c_1x_1+c_2x_2.
\end{equation*}
Let $\Gamma$ be the $(2,2,3)$ complete intersection defined by $Q_3$, $Q_4$, $F_1$. Then $\Gamma$ contains the line $l = \mathbb{P}^1_{\left<x_3,x_4\right>}$ and 
$C_1$ is the residual curve to $l$ in $\Gamma$, which is defined by $Q_3,Q_4,F_1$ and a further quartic $H$, which is the determinant of the matrix
\begin{equation*}
    \begin{pmatrix}
    a_0 & a_1 & a_2 \\
    b_0 & b_1 & b_2 \\
    c_0 & c_1 & c_2
    \end{pmatrix}.
\end{equation*}
The plane curve $C_2$ is then defined by the more general quartic $H' = H + yF_2$, where $F_2$ is a cubic form. The ideal defining $C$ is $I_C = (Q_0,\dots,Q_4,F_1,H')$.

 Thus $C = C_1 + C_2 \subset \mathbb{P}^5$, where $C_1 \subset \mathbb{P}^4$ is residual to a line in a $(2,2,3)$ complete intersection and $C_2 \subset \mathbb{P}^2$ is a plane quartic ``ear'', meeting $C_1$ in four points on a line.
 
Jan Stevens showed that there is a flat family over $\mathbb{A}^1$ whose general fibre is a curve of Type~[550]a and whose fibre over $t=0$ is a general curve of Type~[551]a, see \cite{ablett2021halfcanonical}. Writing the equations using Cramer's rule allows us to describe this deformation to Type~[550]a with ease. Replacing $M$ with 
\begin{equation*}
    M_t = \begin{pmatrix}
    0 & a_0 & a_1 & a_2 \\
    0 & b_0 & b_1 & b_2 \\
    t & c_0 & c_1 & c_2
    \end{pmatrix},
\end{equation*}
the equations $M_tv=0$, $\bigwedge^3 M_t=sv$ now define a Type~[550]a variety if $t$ is invertible, as shown in \cite{ablett2021halfcanonical}.

\subsection{Family~[562]a}\label{2.8}
We present two different families of curves with Betti table SSY 8 and outline a deformation for each construction.
\begin{table}[h!]
    \begin{equation*}\begin{array}{c|c c c c c}
        & 0 & 1 & 2 & 3 & 4 \\ \hline
       0 & 1 & - & - & - & - \\
      1 & - & 5 & 6 & 2 &-\\
      2 &- & 2 & 4 & 2 & -\\
      3 &- & 2 & 6 & 5 & - \\
      4 & - & - & - & - & 1
    \end{array}
\end{equation*}
\caption*{Betti table SSY 8~\cite{schenck2020calabiyau}}
\label{tab:table15}
\end{table}

We construct the first family, which is called [562]a. Let $x_1,\dots,x_4,y_1,y_2$ be coordinates on $\PP^5$ and choose cubics 
\begin{align*}
    F_1 &= a_{13}x_3x_1 + a_{14}x_4x_1 + a_{23}x_3x_2 + a_{24}x_4x_2, \\
    F_2 &= b_{13}x_3x_1 + b_{14}x_4x_1 + b_{23}x_3x_2 + b_{24}x_4x_2
\end{align*}
where $a_{ij}$, $b_{ij}$ are linear forms. Now we can state:

\begin{lemma}
 Let $C$ be the curve in $\PP^5$ defined by the $4\times 4$ Pfaffians of the 
following two skew-symmetric matrices, and one additional equation:
\begin{equation}\label{[562]amats}
\renewcommand{\arraystretch}{1.3}
 M_1=\begin{pmatrix}
y_2 & 0 & \frac{\partial{F_2}}{\partial{x_1}} & \frac{\partial{F_2}}{\partial{x_2}} \\
& 0 & \frac{\partial{F_1}}{\partial{x_1}} & \frac{\partial{F_1}}{\partial{x_2}} \\
& & x_2 & -x_1 \\
&&& H_{34}
 \end{pmatrix},\
 M_2=\begin{pmatrix}
y_1 & 0 & \frac{\partial{F_2}}{\partial{x_3}} & \frac{\partial{F_2}}{\partial{x_4}} \\
& 0 & \frac{\partial{F_1}}{\partial{x_3}} & \frac{\partial{F_1}}{\partial{x_4}} \\
& & x_4 & -x_3 \\
&&& H_{12}
 \end{pmatrix},\
y_1y_2=0.
\end{equation}
Here $H_{12}$ and $H_{34}$ are general cubics.
Then $C$ has Betti table SSY 8. We call this family of curves [562]a.
\end{lemma}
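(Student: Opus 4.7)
The plan is to verify three things in turn: the ideal has the expected 5 quadric, 2 cubic, and 2 quartic generators (matching the first row of SSY 8); the scheme has dimension 1 and degree 15; and the full Betti table agrees with SSY 8. For the generator count, I would expand the Pfaffians of $M_1$ and $M_2$ using the zero entries. The Pfaffians ${\Pf_{\hat{4}}}M_1$ and ${\Pf_{\hat{5}}}M_1$ collapse to $-y_2 x_1$ and $y_2 x_2$, while ${\Pf_{\hat{1}}}M_1$ and ${\Pf_{\hat{2}}}M_1$ equal the cubics $F_1$ and $F_2$ by the Euler-type identity $x_1 \partial F_i/\partial x_1 + x_2 \partial F_i/\partial x_2 = F_i$, valid because $F_1, F_2$ are bilinear in $(x_1,x_2)$ and $(x_3,x_4)$. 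The remaining Pfaffian ${\Pf_{\hat{3}}}M_1$ is a quartic whose $y_2$-free part is built from partial derivatives and whose $y_2$-term is $y_2 H_{34}$. Repeating the expansion for $M_2$ produces the quadrics $y_1 x_3$, $y_1 x_4$, the same cubics, and a second quartic containing $y_1 H_{12}$. Together with $y_1 y_2 = 0$, this gives precisely the predicted $5+2+2$ generators.

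Next, I would use the five quadrics to decompose the support. The vanishing of $y_1 y_2$ together with the $y_i x_j$ forces each point of $C$ into $P_1 \cup P_2 \cup P_3$, where $P_1 = \{y_1 = y_2 = 0\} \cong \PP^3$, $P_2 = \{y_1 = x_1 = x_2 = 0\} \cong \PP^2$ and $P_3 = \{y_2 = x_3 = x_4 = 0\} \cong \PP^2$. On $P_1$, the cubics $F_1, F_2$ lie in the product ideal $(x_1,x_2)(x_3,x_4)$, so the complete intersection $V(F_1,F_2)$ of degree $9$ contains the two lines $V(x_1,x_2)$ and $V(x_3,x_4)$; the restricted quartics cut out the residual curve $C_1$ of degree $7$. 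On $P_2$ the $M_1$-quartic restricts to a plane quartic $C_2$, and symmetrically a plane quartic $C_3$ sits on $P_3$. Since the three components meet only in finite subsets of the pairwise intersection lines $P_i \cap P_j$, we get $\deg C = 7 + 4 + 4 = 15$, matching the degree in Table \ref{tab:table12}.

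Finally, I would verify all rows of the Betti table by computing the minimal free resolution in Magma for a generic choice of $F_1, F_2, H_{12}, H_{34}$. This follows the strategy outlined in the introduction: once dimension and degree are confirmed, the full Hilbert polynomial is determined and only the higher syzygy numbers need checking. The main obstacle will be this syzygy verification, since the features distinguishing SSY 8 from SSY 7 (notably the doubled entries in the middle row) arise deep in the resolution from interactions between the Koszul syzygies of the five quadrics and the Pfaffian syzygies of $M_1, M_2$, and cannot realistically be read off the equations by hand.
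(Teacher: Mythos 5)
The proposal is correct and follows essentially the same route as the paper's own treatment: expanding the ten Pfaffians (using the zero entries and the bilinear-Euler identity $x_1 \partial F_i/\partial x_1 + x_2 \partial F_i/\partial x_2 = F_i$) to identify the nine generators, decomposing $C$ into the residual curve $C_1 \subset \PP^3$ plus the two plane quartic ears $C_2$, $C_3$ (the ``Big Ears'' picture), computing $\deg C = 7+4+4 = 15$, and finishing the Betti table verification in Magma. This is precisely the strategy the paper outlines in the introduction and carries out in \S\ref{2.8}.
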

Note that two of the Pfaffians of $M_1$ are also repeated as Pfaffians of $M_2$. These are
the cubics $F_1=\Pf_{\hat 1} M_1=\Pf_{\hat 1} M_2$, and 
$F_2=\Pf_{\hat 2} M_1=\Pf_{\hat 2} M_2$. Thus there are nine ideal generators.

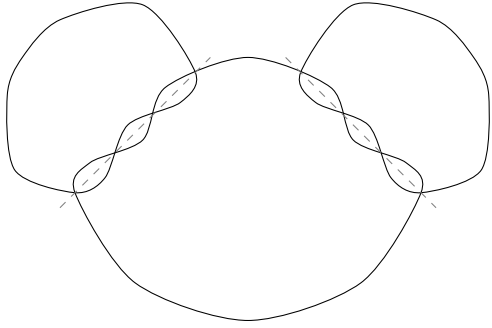
\begin{figure}[ht]
\begin{center}
\begin{tikzpicture}
\draw plot [smooth cycle] coordinates { 
(0.2,0.2) (0.5+0.1,0.5-0.1) (1-0.1,1+0.1) (1.5+0.1,1.5-0.1) (1.8,1.8)
(1.1,2.7) (0,2.5) (-0.5,2) (-0.7,1.5) (-0.6,0.5)};
\draw plot [smooth cycle] coordinates { 
(0.2,0.2) (0.5-0.1,0.5+0.1) (1+0.1,1-0.1) (1.5-0.1,1.5+0.1) 
(2.5,2)
(3.5+0.1,1.5+0.1) (4-0.1,1-0.1) (4.5+0.1,0.5+0.1) (4.8,0.2)
(4,-1) (2.5,-1.5) (1,-1)};
\draw plot [smooth cycle] coordinates { 
(4.8,0.2) (4.5-0.1,0.5-0.1) (4+0.1,1+0.1) (3.5-0.1,1.5-0.1) (3.2,1.8)
(3.9,2.7) (5,2.5) (5.5,2) (5.7,1.5) (5.6,0.5)};
\draw [gray, dashed] (0,0)--(2,2);
\draw [gray, dashed] (3,2)--(5,0);
\end{tikzpicture}
\end{center}
\caption{Curve [562]a -- Big Ears}
\label{bigears}
\end{figure}

Writing out the quadric equations from the Lemma, we get
\begin{equation*}
    Q_1=x_3y_1, \quad Q_2=x_4y_1, \quad Q_3=x_1y_2, \quad Q_4=x_2y_2, \quad Q_5=y_1y_2.
\end{equation*}
Hence $C=C_1+C_2+C_3$ where $C_1\subset\PP^3_{\left<x_1,\dots,x_4 \right>}$, $C_2 \subset \PP^2_{\left<x_3,x_4,y_2\right>}$, and $C_3 \subset \PP^2_{\left<y_1,x_1,x_2\right>}$.

First consider $C_1$. The two repeated Pfaffians $F_1$ and $F_2$ define a $(3,3)$ complete intersection which is residual to the lines $l_{34} = \PP^1_{\left<x_3,x_4\right>}$ and $l_{12} = \PP^1_{\left<x_1,x_2\right>}$.
The residual curve is $C_1$, with ideal $(F_1,F_2,q_{12},q_{34})$, where the quartics $q_{12}$ and $q_{34}$ are given by the determinants of Jacobian matrices as follows:
\begin{equation*}
q_{34}=\frac{\partial(F_1,F_2)}{\partial(x_1,x_2)} = 
    \begin{pmatrix}
    \frac{\partial F_1}{\partial x_1} & \frac{\partial F_1}{\partial x_2} \\
    \frac{\partial F_2}{\partial x_1} & \frac{\partial F_2}{\partial x_2}
    \end{pmatrix}, \quad q_{12} = \frac{\partial(F_1,F_2)}{\partial(x_3,x_4)}=
    \begin{pmatrix}
    \frac{\partial F_1}{\partial x_3} & \frac{\partial F_1}{\partial x_4} \\
    \frac{\partial F_2}{\partial x_3} & \frac{\partial F_2}{\partial x_4}
    \end{pmatrix}.
\end{equation*}
 The quartic $G_{34}=q_{34}+y_2H_{34}$ defines $C_2 \subset \PP^2_{\left<x_3,x_4,y_2\right>}$, and $G_{12}=q_{12}+y_1H_{12}$ defines $C_3 \subset \PP^2_{\left<y_1,x_1,x_2\right>}$. Here $H_{12}$ and $H_{34}$ are general cubics.
 
 Thus $C \subset \PP^5$ is a union of three curves, $C_1 \subset \PP^3$ residual to a line pair in a $(3,3)$ complete intersection, and two plane quartic ``ears'' $C_2$ and $C_3$. Each quartic ear meets $C_1$ in four nodes lying on a residual line. We therefore refer to a curve in family [562]a colloquially as ``Big Ears'' (see Figure \ref{bigears} for a schematic diagram).

\begin{proposition}
There is a specialisation [562]ai of the family [562]a such that a general curve in [562]ai admits a deformation to a curve in family [550]a. We construct a flat family over $\mathbb{A}^1$ whose special fibre is a curve in family [562]ai and whose general fibre is a curve in family [550]a.
\end{proposition}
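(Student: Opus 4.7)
The plan is to mimic the smoothing of family [551]a to [550]a, namely to introduce a single parameter $t$ into the defining matrices so that at $t=0$ we recover a [562]a curve (with suitably specialised data) and for $t$ invertible we obtain a curve in [550]a. The first thing to pin down is the specialisation [562]ai itself: a general [562]a curve has 9 generators (5 quadrics, 2 cubics, 2 quartics) whereas a [550]a curve has only 6 (5 quadrics and 1 cubic), so three of the [562]a generators must become consequences of the others at general $t$. This forces compatibility conditions on the cubics $H_{12}$, $H_{34}$ relative to $F_1$, $F_2$. I expect [562]ai to be the locus in [562]a where $H_{12}$ and $H_{34}$ can be expressed in terms of the same linear forms $a_{ij}$, $b_{ij}$ appearing in $F_1$ and $F_2$, so that the separate matrices $M_1$ and $M_2$ glue into (or become sub-structures of) a single $5\times 5$ skew-symmetric matrix describing $\Gr(2,5)\cap \PP^5$ on the deformed side.

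Concretely, I would replace one of the zero entries in $M_1$ and $M_2$ (for instance the $(1,3)$ or $(2,3)$ position, which are zero in both matrices) with $t$ or a $t$-multiple of a linear form, and simultaneously perturb the scalar equation $y_1y_2=0$ to $y_1y_2=tG$ for an appropriate cubic $G$ dictated by the other data. This yields an ideal $I_t\subset k[x_1,\dots,x_4,y_1,y_2][t]$. The task is then to exhibit the five deformed Pfaffians as the Pfaffians of a single $5\times 5$ skew-symmetric matrix in linear forms on $\PP^5$ once $t$ is invertible, with $G$ playing the role of the cubic hypersurface factor in the [550]a description $\Gr(2,5)\cap \PP^5 \cap X_3$.

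Flatness is then verified by the criterion recalled in the introduction: over a Noetherian integral base $\mathbb{A}^1$ it suffices to show that every fibre has the same Hilbert polynomial. Since Table~\ref{tab:table12} places SSY 8 and CGKK 2 in the same degree-genus class, this reduces to checking that each fibre $X_t$ is one-dimensional and that the Betti table is SSY 8 at $t=0$ and CGKK 2 for general $t$. As elsewhere in the paper, these computations are carried out in Magma using the accompanying repository.

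The main obstacle is pinpointing the correct specialisation [562]ai, together with the matching perturbation of the matrices. A naive attempt starting from a generic [562]a curve is doomed, because the plane quartic ears $C_2$ and $C_3$ (governed by the free cubics $H_{12}, H_{34}$) will not fit into the Pfaffian format of [550]a. Identifying precisely which constraints on $H_{12}, H_{34}$ (and possibly on $F_1,F_2$) permit a consistent perturbation requires a careful analysis of the syzygies between the Pfaffians of $M_1$, $M_2$ and the coupling equation $y_1y_2=0$. Once this specialisation and its perturbation are written down, the Hilbert-polynomial-on-fibres check is routine.
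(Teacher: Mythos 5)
Your strategic outline matches the paper's approach: perturb $M_1$, $M_2$ by a parameter $t$ in a zero entry (the paper uses entry $(1,3)$), show that three generators become redundant when $t$ is invertible, display the survivors as a single $5\times5$ Pfaffian matrix plus a cubic, and deduce flatness from constancy of the Hilbert polynomial. But there is a genuine gap at the step you yourself flag as the main obstacle: you have not identified the correct specialisation, and the one you propose is both stronger than and qualitatively different from the one that works. The paper's [562]ai is simply the locus where $H_{12}=H_{34}$ (call the common cubic $H$); no expression of $H$ in terms of the $a_{ij}$, $b_{ij}$ is required, and $H$ stays completely general. The point of this condition is not to make the ears ``fit into the Pfaffian format'' directly, but to preserve the coincidences $\Pf_{\hat 1} M_1' = \Pf_{\hat 1} M_2'$ and $\Pf_{\hat 2} M_1' = \Pf_{\hat 2} M_2'$ after the $t$-perturbation, which is what guarantees the ideal continues to have at most $9$ generators and lets you check which three drop out. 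Without seeing this coincidence-preservation mechanism, one cannot write down the perturbation that actually works.

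Two further details you would need to correct. First, your proposed perturbation $y_1y_2 = tG$ with $G$ a cubic is dimensionally inconsistent ($y_1y_2$ is a quadric); more to the point, the actual perturbation is second order, $Q_5' = y_1y_2 - t^2(a_{14}a_{23}-a_{13}a_{24})$, with a quadric built only from the $a_{ij}$, not from $H$. This $t^2$ is forced by the structure of the perturbed Pfaffians. Second, the identity exhibiting the general fibre as [550]a is concrete: the five quadric Pfaffians $Q_1',\dots,Q_4',Q_5'$ are the $4\times4$ Pfaffians of
\[
\begin{pmatrix}
& y_1 & x_2 & ta_{14} & ta_{13} \\
& & -x_1 & ta_{24} & ta_{23} \\
& & & -x_3 & x_4 \\
& & & & y_2
\end{pmatrix},
\]
and the sixth generator is $F_2+tH$. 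Without producing this (or something like it), the argument that the general fibre lies in [550]a is incomplete. So your proposal captures the high-level shape of the proof but is missing the specific and nontrivial choices (the constraint $H_{12}=H_{34}$, the $t^2$ coefficient, the explicit $5\times5$ matrix) that make it go through.
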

\begin{proof}
The following deformation was first explained to us by Jan Stevens. Let [562]ai denote the special subfamily of family [562]a where $H=H_{12}=H_{23}$. Let $t \in \mathbb{A}^1$ be a degree 0 deformation parameter. We perturb the matrices \eqref{[562]amats} to the following:
\begin{equation*}\label{[562]amatsnew}
\renewcommand{\arraystretch}{1.3}
 M_1'=\begin{pmatrix}
y_2 & t & \frac{\partial{F_2}}{\partial{x_1}} & \frac{\partial{F_2}}{\partial{x_2}} \\
& 0 & \frac{\partial{F_1}}{\partial{x_1}} & \frac{\partial{F_1}}{\partial{x_2}} \\
& & x_2 & -x_1 \\
&&& H
 \end{pmatrix},\
 M_2'=\begin{pmatrix}
y_1 & t & \frac{\partial{F_2}}{\partial{x_3}} & \frac{\partial{F_2}}{\partial{x_4}} \\
& 0 & \frac{\partial{F_1}}{\partial{x_3}} & \frac{\partial{F_1}}{\partial{x_4}} \\
& & x_4 & -x_3 \\
&&& H
 \end{pmatrix}.
\end{equation*}
The quadric $Q_5$ also becomes $Q_5' = y_1y_2 - t^2(a_{14}a_{23}-a_{13}a_{24})$. Since we assumed that $H_{12}=H_{34}$, we maintain the coincidences $\Pf_{\hat i} M'_1=\Pf_{\hat i} M'_2$ for $i=1,2$.
We write out the perturbed Pfaffians in full. The Pfaffian quadrics deform to
\begin{align*}
 Q'_3&=\Pf_{\hat 4} M'_1=x_1y_2+t\tfrac{\partial{F_1}}{\partial{x_2}},&
 Q'_4&=\Pf_{\hat 5} M'_1=x_2y_2-t\tfrac{\partial{F_1}}{\partial{x_1}},\\
 Q'_1&=\Pf_{\hat 4} M'_2=x_3y_1+t\tfrac{\partial{F_1}}{\partial{x_4}},&
 Q'_2&=\Pf_{\hat 5} M'_2=x_4y_1-t\tfrac{\partial{F_1}}{\partial{x_3}},
\end{align*}
one of the repeated cubic Pfaffians deforms to
\[
 F_2'=\Pf_{\hat 2}M_1'=\Pf_{\hat 2}M_2'=F_2+tH,
\]
and the three remaining Pfaffians (one repeated) are unchanged:
\begin{align*}
 F_1'&=\Pf_{\hat 1} M'_1=\Pf_{\hat 1} M'_2=F_1,\\
 G_{12}'&=\Pf_{\hat 3} M'_1=y_1H+\det\tfrac{\partial(F_1,F_2)}{\partial(x_3,x_4)}=G_{12}, \\
 G_{34}'&=\Pf_{\hat 3} M'_2=y_2H+\det\tfrac{\partial(F_1,F_2)}{\partial(x_1,x_2)}=G_{34} .
\end{align*}
If $t$ is invertible, then these three unchanged Pfaffians are redundant. That is, they are not needed as ideal generators. Indeed, using $Q_i'$ to eliminate the partial derivatives of $F_1$, we have
\[
tF_1=t\tfrac{\partial{F_1}}{\partial{x_1}}x_1+t\tfrac{\partial{F_1}}{\partial{x_2}}x_2
=(x_2y_2)x_1+(-x_1y_2)x_2=0,
\]
and 
\begin{align*}
tG_{12}&=ty_1H+t\det\begin{pmatrix}
\tfrac{\partial{F_1}}{\partial{x_3}} &
\tfrac{\partial{F_1}}{\partial{x_4}} \\
\tfrac{\partial{F_2}}{\partial{x_3}} &
\tfrac{\partial{F_2}}{\partial{x_4}}
\end{pmatrix}=
ty_1H+\det\begin{pmatrix}
x_4y_1 & -x_3y_1 \\
\tfrac{\partial{F_2}}{\partial{x_3}} &
\tfrac{\partial{F_2}}{\partial{x_4}}
\end{pmatrix},\\
&=ty_1H+y_1(\tfrac{\partial{F_2}}{\partial{x_3}}x_3+\tfrac{\partial{F_2}}{\partial{x_4}}x_4)=y_1F'_2.
\end{align*}
Similarly, $G_{34}'$ reduces to $y_2F_2'$ and thus, $F_1'$, $G_{12}'$ and $G_{34}'$ are redundant when $t$ is invertible. The remaining Pfaffian equations $Q_1'$, $Q_2'$, $Q_3'$, $Q_4'$, $F_2'$ and the extra equation  $y_1y_2 - t^2(a_{14}a_{23}-a_{13}a_{24})$ define a curve in family [550]a as follows:
\[
\Pf\begin{pmatrix}
& y_1 & x_2 & ta_{14} & ta_{13} \\
& & -x_1 & ta_{24} & ta_{23} \\
& & & -x_3 & x_4 \\
& & & & y_2
\end{pmatrix}, \ F_2+tH.
\]
Consider the ideal $I_t=(Q_1',\dots,Q_4',Q_5,F_1,F_2',G_{12},G_{34})$. By the above, when $t$ is invertible $I_t$ defines a curve of Type~[550]a. Clearly $I_0$ defines a curve of Type~[562]ai. Every fibre has the same dimension and degree, and consequently the same Hilbert polynomial. Since the Hilbert polynomial at each fibre is independent of $t$ this is indeed a flat family over $\mathbb{A}^1$.
\end{proof}

There is a second, symmetric deformation to [550]a if we put $t$ in entry $23$ of the perturbed matrices $M'_i$, instead of entry $13$.

\begin{proposition}
Let [562]aii be the specialisation of the family of curves [562]a obtained by requiring $H_{34}=0$. Then a general curve in [562]aii can be deformed to a curve in [551]a. We exhibit a flat family over $\mathbb{A}^1$ whose fibre at $t=0$ is a general curve in [562]aii, and whose general fibre is a curve in [551]a.
\end{proposition}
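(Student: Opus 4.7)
The plan is to construct an explicit flat family over $\mathbb{A}^1$ by an asymmetric perturbation of the matrices in \eqref{[562]amats}, breaking the symmetry between $M_1$ and $M_2$ that was used in the previous proposition to deform to [550]a. Geometrically, a curve in [551]a is $C_1+C_2$ with $C_1\subset\PP^4$ residual to a line in a $(2,2,3)$ complete intersection and $C_2$ a single plane-quartic ear, so the deformation must absorb exactly one of the two ears of a [562]aii curve while preserving the other. The condition $H_{34}=0$ defining [562]aii already signals which ear will disappear: the quartic $G_{34}=q_{34}+y_2H_{34}$ collapses to $q_{34}$, which is independent of $y_2$, so the ear $C_2\subset\PP^2_{\langle x_3,x_4,y_2\rangle}$ is the degenerate one that will be swept into the $\PP^4$ piece, while the other ear $C_3\subset\PP^2_{\langle y_1,x_1,x_2\rangle}$ survives.

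First I would perturb only one of the matrices, say $M_1$, by inserting the deformation parameter $t$ in a suitable entry (the $(1,2)$ position is the natural candidate in light of the previous proof), leaving $M_2$ unchanged, and simultaneously deforming the quadric $Q_5=y_1y_2$ to some $Q_5'$ involving the linear coefficient forms of $F_1$ and $F_2$. The asymmetric perturbation breaks the coincidences $\Pf_{\hat i}M_1=\Pf_{\hat i}M_2$ for $i=1,2$, so the ideal $I_t$ acquires extra generators whose differences from the old ones are multiples of $t$. I would then compute the new Pfaffians of $M_1'$ explicitly and carry out algebraic eliminations of the same style as the previous proof, substituting the deformed quadrics into the cubic and quartic Pfaffians to express several of them as multiples of $t$ and hence as redundant ideal generators when $t$ is invertible. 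The goal is a reduction, for invertible $t$, to an ideal with precisely the right number of generators in each degree, which can be rewritten in the Cramer's rule form $Mv=0,\ \bigwedge^3 M=sv$ defining a curve in [551]a.

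Finally, flatness follows from the Hilbert polynomial criterion cited in the introduction: both [562]aii and [551]a have degree $15$ and genus $16$, so their Hilbert polynomials agree, and it suffices to verify in Magma that every fibre has the expected dimension and Betti table, namely SSY 8 at $t=0$ and SSY 7 for generic $t\ne 0$. The main obstacle is pinpointing the correct asymmetric perturbation: the entry in which $t$ is inserted and the companion deformation of $Q_5$ must leave exactly one fewer redundant relation than the symmetric perturbation of the previous proposition, so that one ear survives and the surviving equations assemble into the [551]a Cramer's rule format rather than into [550]a or into a non-reduced or wrong-dimensional scheme. Once this is identified, the verification of the Betti table at $t=0$ and at a generic $t$ is a direct Magma computation.
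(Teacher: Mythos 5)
Your overall strategy is the paper's: perturb only $M_1$ by inserting $t$ into the $(1,3)$ entry (the zero next to $y_2$), eliminate generators that become redundant when $t$ is invertible, and recast the survivors in the Cramer's rule format $Mv=0$, $\bigwedge^3 M=sv$, with flatness certified by constancy of the Hilbert polynomial. However, two of your technical claims are wrong in a way that matters to the mechanism. First, the asymmetric perturbation does \emph{not} break the coincidences $\Pf_{\hat 1}M_1'=\Pf_{\hat 1}M_2$ and $\Pf_{\hat 2}M_1'=\Pf_{\hat 2}M_2$. The Pfaffian $\Pf_{\hat 1}$ never sees the $(1,3)$ entry at all, and in $\Pf_{\hat 2}M_1'=m_{13}m_{45}-m_{14}m_{35}+m_{15}m_{34}$ the new $t$-term enters only through $m_{13}m_{45}=tH_{34}$, which vanishes precisely because [562]aii imposes $H_{34}=0$. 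You have the role of that hypothesis backwards: $H_{34}=0$ is needed to \emph{preserve} the repeated-Pfaffian coincidences under an asymmetric perturbation, not because the perturbation breaks them. Without it, $\Pf_{\hat 2}M_1'$ and $\Pf_{\hat 2}M_2$ would separate, producing an extra cubic generator and the wrong Betti table.

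Second, no companion deformation of $Q_5=y_1y_2$ is used; the paper leaves $Q_5$ fixed. Only $\Pf_{\hat 4}M_1'$ and $\Pf_{\hat 5}M_1'$ are perturbed (to $Q_3'=x_1y_2+t\tfrac{\partial F_1}{\partial x_2}$ and $Q_4'=x_2y_2-t\tfrac{\partial F_1}{\partial x_1}$), and the eliminations $tF_1=0$ and $tG_{34}=y_2F_2$ follow from these two alone. The seven remaining generators $(Q_1,Q_2,Q_3',Q_4',Q_5,F_2,G_{12})$ then fit into the Cramer's rule format with
\[\renewcommand{\arraystretch}{1.3}
M=\begin{pmatrix}
0&a_{23}&a_{24}&x_1\\
0&a_{13}&a_{14}&-x_2\\
0&\frac{\partial{F_2}}{\partial{x_3}} & \frac{\partial{F_2}}{\partial{x_4}}&0
\end{pmatrix},\quad
v=\begin{pmatrix}H_{12}\\tx_3\\tx_4\\y_2\end{pmatrix},\quad s=y_1.
\]
Once you correct the two points above, your outline matches the paper's proof and the verification at $t=0$ and generic $t$ is the Magma computation you describe.
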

\begin{proof}

The following deformation was first explained to us by Jan Stevens. Let [562]aii denote the special subfamily of [562]a where $H_{34}=0$. Replace $M_1$ with the matrix
\begin{equation*}
    M_1'=
    \begin{pmatrix}
        y_2 & t & \frac{\partial{F_2}}{\partial{x_1}} & \frac{\partial{F_2}}{\partial{x_2}} \\
        & 0 & \frac{\partial{F_1}}{\partial{x_1}} & \frac{\partial{F_1}}{\partial{x_2}} \\
        & & x_2 & -x_1 \\
        &&& 0
    \end{pmatrix},
\end{equation*}
and leave $M_2$ as before. Two Pfaffians are perturbed and the rest remain unchanged. Specifically, we now have 
\begin{equation*}
 Q'_3=\Pf_{\hat 4} M'_1=x_1y_2+t\tfrac{\partial{F_1}}{\partial{x_2}},\ \ \
 Q'_4=\Pf_{\hat 5} M'_1=x_2y_2-t\tfrac{\partial{F_1}}{\partial{x_1}},
\end{equation*}
and we can use these to show that $G_{01}$ and $F_1$ are not needed as ideal generators when $t$ is invertible:
\begin{align*}
tF_1&=t\tfrac{\partial{F_1}}{\partial{x_1}}x_1+t\tfrac{\partial{F_1}}{\partial{x_2}}x_2=(x_2y_2)x_1+(-x_1y_2)x_2=0, \\
tG_{34}&=t\det\tfrac{\partial(F_1,F_2)}{\partial(x_1,x_2)}=(x_2y_2)\tfrac{\partial{F_2}}{\partial{x_2}}-(-x_1y_2)\tfrac{\partial{F_2}}{\partial{x_1}}=y_2F_2.
\end{align*}

Moreover, for $t$ invertible, the remaining seven generators define a curve in [551]a as follows:
\[\renewcommand{\arraystretch}{1.3}
M=\begin{pmatrix}
0&a_{23}&a_{24}&x_1\\
0&a_{13}&a_{14}&-x_2\\
0&\frac{\partial{F_2}}{\partial{x_3}} & \frac{\partial{F_2}}{\partial{x_4}}&0   
  \end{pmatrix},\quad
v=\begin{pmatrix}
H_{12}\\tx_3\\tx_4\\y_2
\end{pmatrix},\quad
s=y_1.
\]
Consequently the ideal $I_t=(Q_1,Q_2,Q_3',Q_4',Q_5,F_1,F_2,G_{12},G_{23})$ defines a Type~[562]aii curve when $t=0$ and a Type~[551]a curve otherwise, and this is again a flat family over $\mathbb{A}^1$.
\end{proof}

The specialisation $H_{34}=0$ imposes a nodal ear on the general curve of Type~[562]aii. The deformation smooths the central curve and the nodal ear together into a curve of degree 11 which is residual to a line in a $(2,2,3)$ complete intersection, just as in Type~[551]a. The other ear is deformed trivially.

\subsection{Family~[562]b}
The second family with Betti table SSY 8 is outlined in \cite{kapustka2021quaternary}, and is similar to family [441b]a described in \ref{[441b]}. 
\begin{lemma}
 Let $x_0,x_1,y_0,y_1,z_0,z_1$ be coordinates on $\PP^5$ and let $C$ be a curve in $\PP^5$ defined by the following nine equations:
 \begin{gather*}
Q_1=x_0y_0, \quad Q_2 = x_0y_1, \quad Q_3 = x_1y_0, \quad Q_4=x_1y_1,\\
F_1 = A_0y_0-A_1y_1, \quad F_2 = B_0y_0-B_1y_1, \\
G_1 = L_0x_0-L_1x_1, \quad G_2=D_0x_0-D_1x_1,\quad H.
\end{gather*}
Here $A_0,A_1,B_0,B_1$ are quadratic forms, $L_0,L_1$ are linear forms, $D_0,D_1$ are cubic forms and $H$ is a quartic which agrees with $A_0B_1-B_0A_1$ on $\mathbb{P}^3_{\left<y_0,y_1,z_0,z_1\right>}$ and which agrees with $L_0D_1-L_1D_0$ on $\mathbb{P}^3_{\left<x_0,x_1,z_0,z_1\right>}$.
Then $C$ has Betti table SSY 8. We call this family of curves [562]b.
\end{lemma}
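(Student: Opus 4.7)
The plan is to combine a geometric decomposition of $C$ with a computational check of the Betti numbers. First, I would observe that the four quadrics $Q_1,\dots,Q_4$ generate the ideal $(x_0,x_1)\cap(y_0,y_1)$, which cuts out the union $\PP^3_{\left<y_0,y_1,z_0,z_1\right>}\cup\PP^3_{\left<x_0,x_1,z_0,z_1\right>}$ meeting along the line $\PP^1_{\left<z_0,z_1\right>}$. Hence scheme-theoretically $C=C_y\cup C_x$, with $C_y$ in the first $\PP^3$ and $C_x$ in the second, and I would analyse each component separately.

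On $C_y$ (where $x_0=x_1=0$) the forms $G_1, G_2$ vanish automatically while $F_1, F_2$ restrict to two cubics $A_0y_0-A_1y_1$ and $B_0y_0-B_1y_1$ containing the line $y_0=y_1=0$. A standard Cramer-type elimination identifies the residual curve of the $(3,3)$ complete intersection as cut out by $F_1, F_2$ together with the Jacobian-type quartic $A_0B_1-A_1B_0$, which by hypothesis equals the restriction of $H$; so $C_y$ has degree $9-1=8$. Symmetrically, on $C_x$ the forms $G_1=L_0x_0-L_1x_1$ and $G_2=D_0x_0-D_1x_1$ cut out a $(2,4)$ complete intersection of degree $8$ containing the line $x_0=x_1=0$, with residual quartic $L_0D_1-L_1D_0$ matching the restriction of $H$; hence $C_x$ has degree $7$. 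The total degree is $15$, consistent with the Hilbert polynomial of a curve with Betti table SSY 8.

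Next, the nine given equations split naturally by degree into five quadrics ($Q_1,\dots,Q_4$ together with $G_1=L_0x_0-L_1x_1$, which has degree $1+1=2$), two cubics ($F_1, F_2$), and two quartics ($G_2, H$); this matches the generator count $(5,2,2)$ given by the first column of SSY 8. To complete the proof I would verify computationally via Magma, in the style used elsewhere in the paper, that for a generic choice of the data $A_i, B_i, L_i, D_i$ (satisfying the compatibility condition ensuring existence of $H$, namely that $A_0B_1-A_1B_0$ and $L_0D_1-L_1D_0$ agree as quartics in $z_0,z_1$) the nine equations are minimal generators and the full minimal free resolution realises the numbers $(5,2,2)$, $(6,4,6)$, $(2,2,5)$, $1$ of SSY 8. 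The main obstacle is not the geometry, which is transparent, but ruling out any additional syzygies that would shorten the resolution or enlarge the generator set; this verification is a routine check using the code hosted at the paper's GitHub repository.
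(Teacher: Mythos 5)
Your proposal is correct and follows essentially the same route as the paper: decompose $C$ into $C_1\subset\mathbb{P}^3_{\left<y_0,y_1,z_0,z_1\right>}$ and $C_2\subset\mathbb{P}^3_{\left<x_0,x_1,z_0,z_1\right>}$ using the reducible quadrics $Q_1,\dots,Q_4$, identify each component as residual to the line $\mathbb{P}^1_{\left<z_0,z_1\right>}$ in a complete intersection (giving degrees $8+7=15$), and certify the Betti table by a Magma computation. One point the paper emphasises that you elide slightly: because the two $2\times2$ determinantal conditions on $H$ come from matrices with mismatched degree patterns $\binom{3\ 1}{3\ 1}$ and $\binom{2\ 2}{2\ 2}$, the existence of a suitable $H$ for generic data is not obvious, and the paper demonstrates non-emptiness of the family only via the explicit specialisation [562]bi; your parenthetical ``compatibility condition'' gestures at this but a careful proof would need to exhibit at least one solution.
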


The four quadrics $Q_1,Q_2,Q_3,Q_4$ are reducible so $C=C_1 + C_2 \subset \mathbb{P}^5$ where $C_1 \subset \mathbb{P}^3_{\left<y_0,y_1,z_0,z_1\right>}$, $C_2 \subset \mathbb{P}^3_{\left<x_0,x_1,z_0,z_1\right>}$.
Moreover, $C_1$ is residual to the line $y_0=y_1=0$ in the complete intersection defined by $(F_1,F_2)$. Similarly $C_2$ is residual to the line $x_0=x_1=0$ in the complete intersection defined by $(G_1,G_2)$. The curves $C_1$ and $C_2$ are each defined by an additional determinantal quartic, given by $A_0B_1-B_0A_1$ for $C_1$ and $L_0D_1-L_1D_0$ for $C_2$. The condition on $H$ implies that these quartics agree with the restriction of $H$ to the line $\mathbb{P}^1_{\left<z_0,z_1\right>}$. 

This determinantal condition on $H$ is tricky, because the entries of the two $2\times2$ matrices have different degrees: 
\begin{equation*}
    \begin{pmatrix}
    3 & 1 \\
    3 & 1
    \end{pmatrix} \quad \text{and} \quad
    \begin{pmatrix}
    2 & 2 \\
    2 & 2
    \end{pmatrix}.
\end{equation*}
To show that family [562]b exists, we focus on the case that the cubics $D_0$ and $D_1$ break up into a linear and quadric form or three linear forms.
Let [562]bi denote the special case
\begin{gather*}
    L_1=x_1,\quad A_0=L_3x_1,\quad B_0=L_0L_1,\quad B_1=L_0L_2,\\ D_0=L_0L_2L_3, \quad  D_1=L_1A_1
\end{gather*}
where $L_0,L_1,L_2,L_3$ are general linear forms. Then
 \begin{gather*}
        F_1=L_3x_1y_0-A_1y_1, \quad F_2=L_0L_1y_0-L_0L_2y_1, \\
        G_1 = L_0x_0-x_1^2, \quad G_2 =L_0L_2L_3x_0-L_1A_1x_1, \quad
        H=L_0L_2L_3x_1 - L_0L_1A_1.
    \end{gather*}
Together with the quadrics $Q_i$, these define a reduced curve with several irreducible components.

\begin{proposition}
The general curve in the specialised family [562]bi as described above can be deformed to a curve of Type~[550]a. We exhibit a flat family over $\mathbb{A}^1$ whose 
special fibre at $t=0$ is of Type~[562]bi and whose general fibre is of Type~[550]a.
\end{proposition}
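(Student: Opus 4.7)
My plan is to mirror the ``Cramer's rule'' deformations used for Family~[562]a. The curve [562]bi is defined by nine generators: five quadrics $Q_1,\ldots,Q_4,G_1$, two cubics $F_1,F_2$, and two quartics $G_2,H$, whereas a curve of Type~[550]a requires only six generators, namely the five $4\times 4$ Pfaffians of a $5\times 5$ skew-symmetric matrix of linear forms together with one cubic. The deformation must therefore be chosen so that at $t=0$ we recover the original nine equations, while for $t$ invertible the five quadric generators reorganise as Pfaffians of a skew matrix and three higher-degree generators become redundant.

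The natural candidate for the Pfaffian package comes from viewing the four reducible quadrics $Q_i=x_iy_j$ as Pfaffians arising from a $5\times 5$ skew matrix whose first two rows encode $(x_0,x_1)$ and whose next two encode $(y_0,y_1)$, with a fifth row designed so that the remaining Pfaffian recovers $G_1=L_0x_0-x_1^2$. I would then perturb selected entries of this matrix by $t$-multiples of the linear forms $L_2,L_3$, keeping the shape at $t=0$ unchanged. The sixth, cubic, generator of the [550]a ideal would then naturally be $F_2=L_0(x_1y_0-L_2y_1)$, possibly with a $t$-dependent correction inherited from the perturbed Pfaffians. Next I would show that $F_1$, $G_2$ and $H$ are algebraic consequences of the six chosen generators when $t$ is invertible, exactly as in the [562]a proofs. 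The factorisation $H=L_0x_1(L_2L_3-A_1)$ and the relation $x_1G_2-x_0H=x_1A_1G_1$ which already hold at $t=0$ are encouraging starting points for expressing $tH$, $tG_2$ and $tF_1$ as $S$-linear combinations of the perturbed quadric and cubic generators.

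The main obstacle is that the two ``halves'' of the curve are structurally asymmetric: one side is controlled by the quadric $G_1$ and the cubic $F_2$, while the other is controlled by the cubic $G_2$ and the quartic $H$. This asymmetry means the perturbation cannot be symmetric in the blocks $(y_0,y_1)$ and $(x_0,x_1)$, and choosing skew-matrix entries which both recover the correct quadrics at $t=0$ and produce the required syzygies for $t\neq 0$ is the delicate step; I would search for these entries by candidate-and-check, guided by the degree constraints and the specialisations $L_1=x_1$, $A_0=L_3x_1$, $B_0=L_0L_1$, $B_1=L_0L_2$. Once the correct perturbation is identified, flatness follows as in the earlier propositions: the Hilbert polynomial is determined by dimension and Betti table, both of which can be verified on each fibre by direct Magma computation (SSY~8 at $t=0$ and CGKK~2 for $t\neq 0$), and constancy of the Hilbert polynomial yields flatness of the family over $\mathbb{A}^1$.
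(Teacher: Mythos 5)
Your overall strategy matches the paper's: perturb the four reducible quadrics $Q_1,\dots,Q_4$ so that, together with $G_1$, they become the $4\times4$ Pfaffians of a $5\times5$ skew matrix (hence cut out a degree~5 del~Pezzo surface), keep one cubic, and show the remaining three higher-degree generators become redundant once $t$ is invertible; flatness then follows from constancy of the Hilbert polynomial. The paper's matrix is
\[
    M = \begin{pmatrix}
     & x_0 & x_1 & L_2 & 0 \\
     & & 0 & tL_1 & tx_1 \\
     & & & y_1 & tL_0 \\
     & & & & y_0
    \end{pmatrix},
\]
so the $t$-entries involve $L_0$, $L_1=x_1$ rather than $L_2,L_3$ as you guessed, but that is a detail you said you would find by candidate-and-check.

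The genuine gap is that you have the roles of $F_1$ and $F_2$ reversed. You propose to keep $F_2=L_0(x_1y_0-L_2y_1)$ as the extra cubic and eliminate $F_1$, $G_2$, $H$. That cannot work: the forms $L_3$ and $A_1$ occur in $F_1=L_3x_1y_0-A_1y_1$ but do not occur in any of the perturbed quadrics, so $F_1$ can never be an algebraic consequence of the Pfaffian package together with $F_2$. It is rather $F_2$ that is redundant, via $tF_2=y_1Q_3'-y_0Q_4'$ with $Q_3'=x_1y_0-tL_2L_0$ and $Q_4'=x_1y_1-tL_1L_0$, and the surviving cubic is $F_1$, which plays the role of the general cubic hypersurface cutting the del~Pezzo surface; this is consistent with $[550]$a being $\Gr(2,5)\cap X_3\cap\PP^5$, since $F_2$ is reducible while $F_1$ carries the free data $L_3,A_1$. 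The syzygies you would then need are $tG_2=A_1Q_2'+x_0F_1-L_3x_1Q_1'+tL_2L_3G_1$ and $tH=A_1Q_4'-L_3x_1Q_3'+x_1F_1$. With $F_1$ and $F_2$ swapped, your relations $H=L_0x_1(L_2L_3-A_1)$ and $x_1G_2-x_0H=x_1A_1G_1$ are true but lead in the wrong direction.
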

\begin{proof}
Let $t$ in $\mathbb{A}^1$ be a degree 0 deformation parameter and consider the deformed quadrics
\begin{align*}
        Q_1'&=x_0y_0-tL_2x_1, &
        Q_2'&=x_0y_1-tL_1x_1, \\
        Q_3'&=x_1y_0-tL_2L_0, &
        Q_4'&=x_1y_1-tL_1L_0.
\end{align*}

When $t$ is invertible the ideal of the five quadrics $(Q_1',Q_2',Q_3',Q_4',G_1)$ can be generated by the $4 \times 4$ Pfaffians of the skew-symmetric matrix
\begin{equation*}
    M = \begin{pmatrix}
     & x_0 & x_1 & L_2 & 0 \\
     & & 0 & tL_1 & tx_1 \\
     & & & y_1 & tL_0 \\
     & & & & y_0 \\
    \end{pmatrix}.
\end{equation*}
Consequently when $t$ is invertible $(Q_1',Q_2',Q_3',Q_4',G_1)$ defines a degree 5 del Pezzo surface. Note that this is untrue when $t=0$, since it is necessary to be able to cancel $t$. Furthermore, we can show that when $t$ is invertible, $F_2,G_2$ and $H$ are not needed as ideal generators. Indeed, we have
\begin{align*}
tF_2 &= y_1Q_3' - y_0Q_4', \\
tG_2 &= A_1Q_2' + x_0F_1 - L_3x_1Q_1' + tL_2L_3G_1, \\
tH &= A_1Q_4' - L_3x_1Q_3' + x_1F_1.
\end{align*}
It follows that $F_1$ defines a cubic hypersurface in the del Pezzo surface cut out by the five quadrics. The ideals $I_t = (Q_1',Q_2',Q_3',Q_4',G_1,G_2,F_1,F_2,H)$ thus define a flat family of curves $C_t$, where $C_0$ is in family [562]bi and $C_t$ is in [550]a for $t$ invertible. Since the Hilbert polynomial is independent of $t$, this is indeed a flat family.
\end{proof}

\section{Deformations in degree 16}\label{sec3}
We now turn our attention to the degree 16 varieties. There are two different irreducible subsets of the locus $\mathcal{F}_{[441]}$ corresponding to Betti table SSY 6, denoted $\mathcal{F}_{[441a]}$ and $\mathcal{F}_{[441b]}$. The $a$ and $b$ are included inside the bracket to indicate the different components of the space of quarternary quartics in \cite{kapustka2021quaternary}. The corresponding families of varieties are denoted by $[441a]a$ and $[441b]a$, with both admitting specialisations which smooth to a nonsingular variety of \cite{coughlan2016arithmetically}. There are further families with Betti tables SSY 3 and SSY 4, which are denoted by [430]a and [420]a respectively. We connect each of these families to family [400]a, which is the family of complete intersections of four quadrics. The full results for this section are seen in figure \ref{deg16}.

\begin{figure}[ht]
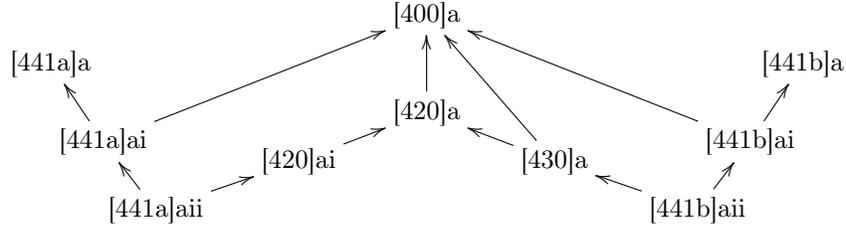

\[\xygraph{
!{<0cm,0cm>;<1cm,0cm>:<0cm,1.3cm>::}
!{(0,5)}*+{{\text{[400]a}}}="1"
!{(0,4)}*+{{\text{[420]a}}}="2"
!{(-1.7,3.5)}*+{{\text{[420]ai}}}="10"
!{(1.7,3.5)}*+{{\text{[430]a}}}="3"
!{(5,4.5)}*+{{\text{[441b]a}}}="4"
!{(4.3,3.7)}*+{{\text{[441b]ai}}}="5"
!{(3.6,3)}*+{{\text{[441b]aii}}}="6"
!{(-5,4.5)}*+{{\text{[441a]a}}}="7"
!{(-4.3,3.7)}*+{{\text{[441a]ai}}}="8"
!{(-3.6,3)}*+{{\text{[441a]aii}}}="9"
"2":"1"
"3":"1"
"8":"1"
"3":"2"
"10":"2"
"6":"3"
"5":"1"
"5":"4"
"6":"5"
"8":"7"
"9":"8"
"9":"10"
}\]\caption{Degree 16 strata and incidences}
\label{deg16}
\end{figure}

\subsection{Family~[400]a}
The family~[400]a consists of $(2,2,2,2)$ complete intersections. This family has Betti table CGKK 3. We construct a number of smoothings of varieties in the [420]a, [430]a, [441a]a and [441b]a families to varieties of this type.
\begin{table}[h!]
    \begin{equation*}\begin{array}{c|c c c c c}
        & 0 & 1 & 2 & 3 & 4 \\ \hline
       0 & 1 & - & - & - & - \\
      1 & - & 4 & - & - &-\\
      2 &- & - & 6 & - & -\\
      3 &- & - & - & 4 & - \\
      4 & - & - & - & - & 1
    \end{array}
\end{equation*}
\caption*{Betti table CGKK 3 ~\cite{schenck2020calabiyau}}
\label{tab:table4}
\end{table}

\subsection{Family~[420]a}
The original construction of the [420]a family is found in \cite{schenck2020calabiyau}. This family corresponds to Betti table SSY 4, and unlike the other ``SSY'' constructions is a family of nonsingular, irreducible curves. Note that when embedded as a threefold in $\mathbb{P}^7$, Schenck, Stillman and Yuan show that this threefold is singular.

\begin{table}[h!]
    \begin{equation*}\begin{array}{c|c c c c c}
        & 0 & 1 & 2 & 3 & 4 \\ \hline
       0 & 1 & - & - & - & - \\
      1 & - & 4 & 2 & - &-\\
      2 &- & 2 & 6 & 2 & -\\
      3 &- & - & 2 & 4 & - \\
      4 & - & - & - & - & 1
    \end{array}
\end{equation*}
\caption*{Betti table SSY 4~\cite{schenck2020calabiyau}}
\label{tab:table5}
\end{table}

Let $x_0,\dots,x_5$ be coordinates on $\mathbb{P}^5$ and consider the following matrix
\begin{equation*}
M = 
    \begin{pmatrix}
    & x_0 & x_1 & x_2 & 0 \\
    & & q_1 & q_2 & x_3 \\
    & & & q_3 & x_4 \\
    & & & & x_5 \\
    \end{pmatrix},
\end{equation*}
where the $q_i$ are general quadric forms. The $4 \times 4$ Pfaffians of this matrix define a codimension three variety, which when intersected with a fourth general quadric $Q$ is a Gorenstein codimension four curve of Type~[420]a.  The Pfaffians are given explicitly as
\begin{align*}
    {\text{Pf}_{\hat{1}}} &= x_5q_1 - x_4q_2 + x_3q_3, \\
    {\text{Pf}_{\hat{2}}} &= x_1x_5 - x_2x_4, \\
    {\text{Pf}_{\hat{3}}} &= x_0x_5 - x_2x_3, \\
    {\text{Pf}_{\hat{4}}} &= x_0x_4 - x_1x_3, \\
    {\text{Pf}_{\hat{5}}} &= x_2q_1 - x_1q_2 + x_0q_3.
\end{align*}
\begin{proposition}
A curve of Type~[420]a may be smoothed to the complete intersection of four quadrics. Explicitly, we construct a flat family of curves over $\mathbb{A}^1$ where the special fibre is a curve of the [420]a family and the general fibre is in the [400]a family.
\end{proposition}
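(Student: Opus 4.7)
The zero entry $M_{15}=0$ in the matrix $M$ is the natural place to insert a deformation parameter, by direct analogy with the earlier deformations via Cramer's rule. The plan is to replace $M$ by
\[
M_t=\begin{pmatrix}
 & x_0 & x_1 & x_2 & t \\
 &  & q_1 & q_2 & x_3 \\
 &  &  & q_3 & x_4 \\
 &  &  &  & x_5
\end{pmatrix}
\]
with $t$ a degree-zero parameter, keep the extra quadric $Q$ unchanged, and let $I_t$ be the ideal of $Q$ together with the five $4\times 4$ Pfaffians of $M_t$. At $t=0$ we recover the curve of Type~[420]a by definition, so the task reduces to showing that for $t$ invertible the ideal is a regular sequence of four quadrics and that the Hilbert polynomial is constant.

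The three quadric Pfaffians are perturbed to
\[
\Pf_{\hat 4}=x_0x_4-x_1x_3+tq_1,\quad
\Pf_{\hat 3}=x_0x_5-x_2x_3+tq_2,\quad
\Pf_{\hat 2}=x_1x_5-x_2x_4+tq_3,
\]
while the two cubic Pfaffians $\Pf_{\hat 1}=x_5q_1-x_4q_2+x_3q_3$ and $\Pf_{\hat 5}=x_2q_1-x_1q_2+x_0q_3$ are unchanged. The first key step is to solve for $q_1,q_2,q_3$ from the three quadric Pfaffians when $t$ is invertible and substitute into the cubic Pfaffians: one gets $t\Pf_{\hat 1}=0$ and $t\Pf_{\hat 5}=0$ identically (the two $3\times 3$ determinants of the $3\times 3$ alternating matrix $(x_ix_j-x_jx_i)$-type combinations vanish), so the cubic Pfaffians are redundant once $t\neq 0$. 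Hence for invertible $t$ the ideal $I_t$ is generated by the four quadrics $\Pf_{\hat 2},\Pf_{\hat 3},\Pf_{\hat 4},Q$.

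It remains to verify that these four quadrics cut out a complete intersection, i.e.\ a curve in $\PP^5$. For $t\neq 0$ and generic choices of the original $q_i$ and $Q$, this is an open condition, and one checks it at a single specialisation using Magma (for example by fixing $t=1$ and random $q_i,Q$), as indicated in the introduction. The Betti table then collapses to CGKK~3, the complete intersection of four quadrics, placing the general fibre in family [400]a. The flatness of the family $\{I_t\}_{t\in\mathbb{A}^1}$ follows by the criterion invoked throughout the paper: each fibre is a curve of the same degree $16$ and genus $17$, so the Hilbert polynomial is constant.

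The only real obstacle is verifying that the Pfaffian identities $t\Pf_{\hat 1}\equiv 0$ and $t\Pf_{\hat 5}\equiv 0$ hold modulo the quadric Pfaffians rather than only on the variety; this is a direct substitution and amounts to the vanishing of the $3\times 3$ minors of a $3\times 3$ alternating combination of the $x_i$, which is a short algebraic identity. Once this is in hand, the structural description of the general fibre as a $(2,2,2,2)$ complete intersection, and hence the flatness of the family, is immediate.
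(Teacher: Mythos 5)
Your proof is correct and follows essentially the same route as the paper: insert the deformation parameter $t$ in the zero entry, observe the three quadric Pfaffians acquire a $tq_i$ term, and show that for invertible $t$ the two cubic Pfaffians are redundant, leaving a $(2,2,2,2)$ complete intersection, with flatness by constancy of the Hilbert polynomial. The only cosmetic difference is that where you solve for $q_1,q_2,q_3$ and substitute into $\Pf_{\hat 1}$ and $\Pf_{\hat 5}$ to see them vanish modulo the quadrics, the paper writes the Pfaffian syzygies $t\Pf_{\hat 1}=x_3\Pf_{\hat 2}'-x_4\Pf_{\hat 3}'+x_5\Pf_{\hat 4}'$ and $t\Pf_{\hat 5}=x_0\Pf_{\hat 2}'-x_1\Pf_{\hat 3}'+x_2\Pf_{\hat 4}'$ directly; these are two ways of expressing the same algebraic identity.
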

\begin{proof}
The deformation is fairly simple and proceeds as follows. We introduce the deformation parameter $t \in \mathbb{A}^1$. Replacing the zero entry of the above matrix with the parameter $t$ we observe that ${\text{Pf}_{\hat{2}}},{\text{Pf}_{\hat{3}}}$ and ${\text{Pf}_{\hat{4}}}$ each gain a term, becoming
\begin{equation*}
{\text{Pf}_{\hat{2}}}' = x_1x_5 - x_2x_4 + tq_2, \quad {\text{Pf}_{\hat{3}}}' = x_0x_5 - x_2x_3 + tq_3, \quad {\text{Pf}_{\hat{4}}}' = x_0x_4 - x_1x_3 + tq_1.
\end{equation*}
From the syzygies between the three original quadric Pfaffians we obtain the following relations on the cubic Pfaffians:
\begin{align*}
    t{\text{Pf}_{\hat{1}}} &= x_0{\text{Pf}_{\hat{3}}}' - x_1{\text{Pf}_{\hat{2}}}' + x_2{\text{Pf}_{\hat{4}}}', \\
    t{\text{Pf}_{\hat{5}}} &= x_3{\text{Pf}_{\hat{3}}}' - x_4{\text{Pf}_{\hat{2}}}' + x_5{\text{Pf}_{\hat{4}}}'.
\end{align*}

It follows that when $t$ is invertible the cubic Pfaffians are not needed as ideal generators, and the ideal $({\text{Pf}_{\hat{2}}}',{\text{Pf}_{\hat{3}}}',{\text{Pf}_{\hat{4}}}',Q)$ defines a $(2,2,2,2)$ complete intersection. Again since the Hilbert polynomial is independent of $t$, this is a flat family over $\mathbb{A}^1$.
\end{proof}

\subsection{Family~[430]a}
We now describe family [430]a, again utilising the ``Cramer's rule'' format. 
\begin{table}[h!]
    \begin{equation*}\begin{array}{c|c c c c c}
        & 0 & 1 & 2 & 3 & 4 \\ \hline
       0 & 1 & - & - & - & - \\
      1 & - & 4 & 3 & - &-\\
      2 &- & 3 & 6 & 3 & -\\
      3 &- & - & 3 & 4 & - \\
      4 & - & - & - & - & 1
    \end{array}
\end{equation*}
\caption*{Betti table SSY 3~\cite{schenck2020calabiyau}}
\label{tab:table16}
\end{table}
Let $x_0,x_1,x_2,y_1,y_2,z$ be coordinates on $\mathbb{P}^5$, let $A,C$ be linear forms and $B,D,P,Q$ quadratic forms.
\begin{lemma}
    Consider the following matrix $M$, vector $v$ and parameter $s$:
    \begin{equation*}
M = 
    \begin{pmatrix}
    Q & P & A & C \\
    -x_0 & x_1 & 0 & 0 \\
    x_1 & -x_2 & 0 & 0 \\
    \end{pmatrix}, \quad
v = 
    \begin{pmatrix}
    y_2 \\
    y_1 \\
    D \\
    -B \\
    \end{pmatrix}, \quad s=z.
\end{equation*}
Then the seven equations obtained from $Mv=0, \quad \bigwedge^3 M=sv$ define a curve $C=C_1 + C_2 \subset \mathbb{P}^5$ with Betti table SSY 3. This is the [430]a family.
\end{lemma}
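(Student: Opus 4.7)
The plan is to unpack the seven ideal generators produced by $Mv=0$ and $\bigwedge^3 M=sv$, use two reducible quadrics among them to split $C=C_1+C_2$, identify each component geometrically, and confirm the Betti table by a direct Magma computation.

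First, I would write out all seven equations explicitly. Expanding $Mv=0$ yields the cubic $Qy_2+Py_1+AD-CB$ from the top row of $M$ and the two quadrics $Q_1=x_1y_1-x_0y_2$ and $Q_2=x_1y_2-x_2y_1$ from the bottom rows (whose last two entries vanish). For $\bigwedge^3 M=sv$, the minors $\det M_{\hat 1}$ and $\det M_{\hat 2}$ vanish identically because the relevant columns of $M$ each contain a single nonzero entry, so the corresponding relations become the reducible quadrics $zy_2=0$ and $zy_1=0$; expanding $\det M_{\hat 3}$ and $\det M_{\hat 4}$ along the last column then produces the two cubics $zD+C(x_0x_2-x_1^2)$ and $zB+A(x_0x_2-x_1^2)$.

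Second, the quadrics $zy_1=zy_2=0$ immediately split the scheme into $C_1\subset\{z=0\}$ and $C_2\subset\{y_1=y_2=0\}$. Restricting to $z=0$, the curve $C_1\subset\mathbb{P}^4_{\langle x_0,x_1,x_2,y_1,y_2\rangle}$ is cut out by $Q_1$, $Q_2$, the cubic $Qy_2+Py_1+AD-CB$ and the products $A(x_0x_2-x_1^2)$, $C(x_0x_2-x_1^2)$; since $Q_1$, $Q_2$ and $x_0x_2-x_1^2$ are the $2\times 2$ minors of $\bigl(\begin{smallmatrix}x_0 & x_1 & y_1\\ x_1 & x_2 & y_2\end{smallmatrix}\bigr)$, the curve $C_1$ sits on the associated cubic scroll in $\mathbb{P}^4$. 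Restricting to $y_1=y_2=0$, the curve $C_2\subset\mathbb{P}^3_{\langle x_0,x_1,x_2,z\rangle}$ is defined by $AD-CB$, $zB+A(x_0x_2-x_1^2)$ and $zD+C(x_0x_2-x_1^2)$, which are exactly the $2\times 2$ minors of $\bigl(\begin{smallmatrix}A & C & -z\\ B & D & x_0x_2-x_1^2\end{smallmatrix}\bigr)$, so $C_2$ is a determinantal curve in $\mathbb{P}^3$.

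Third, to justify the transversality implicit in the notation $C=C_1+C_2$ I would compute the scheme-theoretic intersection: $C_1\cap C_2$ lies on $\{z=y_1=y_2=0\}\cong\mathbb{P}^2_{\langle x_0,x_1,x_2\rangle}$, where the surviving relations are $AD-CB=0$ together with $A(x_0x_2-x_1^2)=C(x_0x_2-x_1^2)=0$. For generic choices of the forms the common zero locus reduces to a finite scheme concentrated on the conic $x_0x_2=x_1^2$, and one checks at each point that the two components meet transversely. Matching degrees and arithmetic genera should then reproduce the numerical invariants predicted by Betti table SSY 3.

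Finally, because the Betti numbers themselves are not forced by the geometric decomposition alone, the actual confirmation that the minimal free resolution has shape SSY 3 will be a direct Magma calculation, exactly in the style used throughout the paper and available in the accompanying repository. The main conceptual work lies in recognising the two determinantal structures in the second step; everything remaining --- the dimension count, the transversality, and the Betti-table verification --- is routine once the equations are in hand.
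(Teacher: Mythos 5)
Your proposal is correct and follows essentially the same route as the paper: write out the seven equations, use the reducible quadrics $zy_1=zy_2=0$ to split $C$ into a component in $\mathbb{P}^4_{\langle x_0,x_1,x_2,y_1,y_2\rangle}$ sitting on the cubic scroll cut out by the $2\times2$ minors of $\bigl(\begin{smallmatrix}x_0 & x_1 & y_1\\ x_1 & x_2 & y_2\end{smallmatrix}\bigr)$ and a component in $\mathbb{P}^3_{\langle x_0,x_1,x_2,z\rangle}$ residual to the conic $x_0x_2-x_1^2$ in a $(3,3)$ complete intersection (equivalently, your determinantal description by the $2\times2$ minors of $\bigl(\begin{smallmatrix}A&C&-z\\ B&D&x_0x_2-x_1^2\end{smallmatrix}\bigr)$), with the Betti table itself verified in Magma. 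The only cosmetic differences are that the paper labels the $\mathbb{P}^3$ component $C_1$ and the $\mathbb{P}^4$ component $C_2$ (opposite to yours) and phrases the $\mathbb{P}^3$ description in terms of linkage rather than a determinantal matrix; your added transversality check is a sensible supplement the paper leaves implicit.
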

We describe the geometry of $C$ in more detail. Two of the four quadric ideal generators, $y_1z$ and $y_2z$ are reducible. Hence $C=C_1+C_2$, with $C_1 \subset \mathbb{P}^3_{\left<x_0,x_1,x_2,z\right>}$ and $C_2 \subset \mathbb{P}^4_{\left<x_0,x_1,x_2,y_1,y_2\right>}$. Consider the cubic surface scroll in $\PP^4_{\left<x_0,x_1,x_2,y_1,y_2\right>}$ defined by the vanishing of the $2 \times 2$ minors of the matrix
\begin{equation*}
    \begin{pmatrix}
    x_0 & x_1 & y_1 \\
    x_1 & x_2 & y_2 \\
    \end{pmatrix}.
\end{equation*}
The first two minors of the above matrix give the remaining two quadrics in the ideal of $C$. Consider the third minor $x_0x_2-x_1^2$. Let
\begin{align*}
    F &= A(x_0x_2-x_1^2) - Bz, \\
    G &= C(x_0x_2-x_1^2) - Dz.
\end{align*}
Then $C_1$ is the curve residual to $x_0x_2-x_1^2$ in the complete intersection $\Gamma \subset \PP^3_{\left<x_0,x_1,x_2,z\right>}$ defined by the vanishing of $(F,G)$. The cubic $AD-BC+Py_1+Qy_2$, cuts out $C_2$ inside the cubic scroll in $\PP^4$.

\begin{proposition}
A general curve of Type [430]a can be deformed to a curve of Type~[420]a, and to the complete intersection of four quadrics.
\end{proposition}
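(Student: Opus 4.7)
The plan is to imitate the perturbation strategy used for the [420]a $\to$ [400]a deformation, but applied to the $3 \times 4$ Cramer matrix $M$ defining family [430]a. I would introduce a degree-$0$ deformation parameter $t \in \mathbb{A}^1$ into one of the zero entries of $M$, for instance replacing $M$ by
\[
M_t = \begin{pmatrix}
Q & P & A & C \\
-x_0 & x_1 & t & 0 \\
x_1 & -x_2 & 0 & 0
\end{pmatrix}.
\]
The four entries at positions $(2,3),(2,4),(3,3),(3,4)$ all have degree $0$ in the natural bigrading on $M$, so inserting $t$ preserves homogeneity of the seven equations $M_t v = 0$ and $\bigwedge^3 M_t = sv$. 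My first step would be to write out which generators pick up $t$-corrections: one expects two of the quadrics (coming from $\det M_{\hat 1}$ and $\det M_{\hat 2}$) to acquire terms $tCx_i$, the row-$2$ equation of $M_tv$ to acquire a term $tD$, and the cubic from $\det M_{\hat 4}$ to acquire a linear combination of $Q$ and $P$.

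The key step is to exhibit a syzygy making one of the three cubic generators redundant modulo the quadrics when $t$ is invertible. Multiplying the perturbed row-$2$ quadric by $z$ and substituting the deformed identities $zy_i + tCx_i = 0$ should produce an expression of the form $t \cdot G$, placing the cubic $G = C(x_0x_2-x_1^2) - Dz$ inside the ideal generated by the four quadrics. The remaining six generators, namely four quadrics and two cubics, then match the Betti table SSY 4 of family [420]a. Flatness over $\mathbb{A}^1$ follows as soon as the Betti tables at $t = 0$ and $t \ne 0$ are confirmed, since this forces the Hilbert polynomial to be constant along the family.

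For the further deformation to [400]a, the cleanest route is to concatenate the family above with the already-constructed [420]a $\to$ [400]a deformation, obtaining a flat family whose special fibre is of Type [430]a and whose general fibre is a $(2,2,2,2)$ complete intersection. Alternatively, a more aggressive perturbation putting nonzero entries into two positions of the zero block of $M$ simultaneously should make all three cubics redundant for $t$ invertible, yielding a direct one-parameter deformation to [400]a. The main technical obstacle throughout is confirming that the generic fibre has exactly the asserted Betti table (SSY 4 or CGKK 3) and no extraneous syzygies, and in particular, in the [400]a case, that the four perturbed quadrics really do cut out a codimension-four curve of the correct degree. This is most reliably handled by an explicit Magma computation, as in the other deformations in the paper; once the Betti tables are verified, constancy of the Hilbert polynomial gives flatness automatically.
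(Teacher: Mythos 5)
Your strategy is exactly the paper's: the paper replaces the $2\times 2$ zero block in $M$ with a matrix $N_t$ over $\mathbb{A}^4$, and the type of the fibre is governed by $\rank N_t$ (zero gives [430]a, one gives [420]a, two gives [400]a). Your insertion of $t$ at position $(2,3)$, together with the syzygy $zQ_3'=tF_2$, is precisely the paper's rank-one case, and your ``more aggressive'' simultaneous perturbation is the paper's rank-two case; the only cosmetic difference is that the paper packages everything into a single family over $\mathbb{A}^4$ rather than two separate $\mathbb{A}^1$-families, sidestepping the concatenation step you mention as the ``cleanest route''.
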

\begin{proof}
We consider the flat family $C_t$ over $\mathbb{A}^4$ with coordinates $t=(t_1,t_2,t_3,t_4)$. Denote by $N$ the $2 \times 2$ block of zeroes in $M$, which we replace with the $2 \times 2$ matrix
\begin{equation*}
N_t = 
    \begin{pmatrix}
    t_1 & t_2 \\
    t_3 & t_4 \\
    \end{pmatrix}.
\end{equation*}
Then $C_t$ is defined using the ``Cramer's rule'' format, with $N_t$ replacing $N$ in $M$. Depending on the rank of $N_t$, we observe three different possibilities for the homological behaviour of $C_t$. When $N_t$ has rank $0$, i.e. $N_t=N$, $C_t$ is a curve of type [430]a. When $N_t$ has rank 1, $C_t$ is a curve of type [420]a. When $N_t$ has rank 2, $C_t$ is a curve of type [400]a.

When $N_t$ has rank 1 we may write
\begin{equation*}
N_t = 
    \begin{pmatrix}
    t_1 & 0 \\
    0 & 0 \\
    \end{pmatrix},
\end{equation*}
where $t_1$ is invertible. Consequently the polynomials defining $C_t$ are given by
\begin{align*}
        Q_1 &= \tfrac{1}{t_1}zy_1 - Cx_1, \\ Q_2 &= \tfrac{1}{t_1}zy_2 - Cx_2, \\ Q_3 &= x_0y_2 - x_1y_1 - t_1D, \\ Q_4 &= x_1y_2 - x_2y_1, \\  F_1 &= A(x_0x_2 - x_1^2) - Bz + t_1(Qx_2 + Px_1), \\ F_2 &= C(x_0x_2 - x_1^2) - Dz, \\  F_3 &= AD - BC + Py_1 + Qy_2.
\end{align*}
Note that we can write $\frac{1}{t_1}$ since $t_1$ is invertible. We can rearrange $Q_1=0$ and $Q_2=0$ to obtain 
\begin{equation*}
Cx_1 = \tfrac{1}{t_1}zy_1, \quad Cx_2 = \tfrac{1}{t_1}zy_2.
\end{equation*}
Substituting these into $F_2$ we observe that $F_2 = \tfrac{z}{t_1}Q_3$, thus this polynomial is not required as an ideal generator. We show that the remaining polynomials define a curve of Type~[420]a. Five of the polynomials are given (modulo $Q_3$) by the $4 \times 4$ Pfaffians of the matrix
\begin{equation*}
    \begin{pmatrix}
     & z & t_1C & -t_1Q-x_0A & t_1P-x_1A \\
     & & 0 & x_1 & x_2 \\
     & & & y_1 & y_2 \\
     & & & & -B \\
    \end{pmatrix},
\end{equation*}
and the remaining polynomial $Q_3$ is the general quadric.

Finally if $N_t$ has rank $2$, then we assume the matrix is diagonal with 
\begin{equation*}
N_t = 
    \begin{pmatrix}
    t_1 & 0 \\
    0 & t_2 \\
    \end{pmatrix}.
\end{equation*}
The equations become

\begin{gather*}
Q_1 = zy_1 + Qt_1t_2 + Ax_0t_2 - Cx_1t_1, \quad
Q_2 = zy_2 - Pt_1t_2 + Ax_1t_2 - Cx_2t_1, \\
Q_3 = x_0y_2 - x_1y_1 - t_1D, \quad
Q_4 = x_1y_2 - x_2y_1 - t_2B, \\
F_1 = A(x_0x_2 - x_1^2) - Bz + t_1(Qx_2 + Px_1), \\
F_2 = C(x_0x_2 - x_1^2) - Dz + t_2(Qx_1 + Px_0), \\
F_3 = AD - BC + Py_1 + Qy_2.
\end{gather*}

We can eliminate $P$ and $Q$, since 
\begin{align*}
    Q &= \tfrac{1}{t_1t_2}(-zy_1 - Ax_0t_2 + Cx_1t_1), \\
    P &= \tfrac{1}{t_1t_2}(zy_2 + Ax_1t_2 - Cx_2t_1),
\end{align*}
and use this to show $F_1, F_2$ and $F_3$ are not necessary to generate the ideal. Thus when $N$ has rank two $C_t$ is a curve of Type~[400]a. Since the Hilbert polynomial at each fibre is independent of $t$ we have a flat family of curves over $\mathbb{A}^4$.
\end{proof}

\subsection{Family~[441a]a}
In \cite{ablett2021halfcanonical} we present two different families with Betti table SSY 6.
\begin{table}[h!]
    \begin{equation*}\begin{array}{c|c c c c c}
        & 0 & 1 & 2 & 3 & 4 \\ \hline
       0 & 1 & - & - & - & - \\
      1 & - & 4 & 4 & 1 &-\\
      2 &- & 4 & 8 & 4 & -\\
      3 &- & 1 & 4 & 4 & - \\
      4 & - & - & - & - & 1
    \end{array}
\end{equation*}
\caption*{Betti table SSY 6~\cite{schenck2020calabiyau}}
\label{tab:table7}
\end{table}

We outline a deformation for each family, starting with [441a]a.
\begin{lemma}
Let $C \subset \mathbb{P}^4_{\left<x_0,\dots,x_4,y\right>}$ be a curve defined by 
the quadrics 
\[Q_0=x_0y, \quad Q_1=x_1y, \quad Q_2=x_2y,\] the quartic 
\[q= (d_0d+d_1b+d_2c)x_3 - c^2 + b^2+Py,\] where $P$ is a cubic form in $(x_3,x_4,y)$ and  $d_0,d_1,d_2$ are linear forms, and
the five Pfaffians of
    \begin{equation}\label{mat}
    M=\begin{pmatrix}
    & b & c & d & 0 \\
    & & x_3 & x_1 & x_0 \\
    & & & x_0 & x_1 \\
    & & & & x_2 \\
    \end{pmatrix},
    \end{equation}
    where $b,c$ and $d$ are quadratic forms, with $d=d_0x_0+d_1x_1+d_2x_2$. Then $C$ has Betti table SSY 6. We denote this family of curves by [441a]a.
\end{lemma}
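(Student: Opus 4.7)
The plan is to follow the same two-pronged strategy the authors use throughout the paper: exhibit an explicit geometric decomposition of $C$ into its irreducible components to see that $C$ is genuinely a curve of the expected degree, and then invoke a Magma computation to certify that the minimal free resolution has the prescribed Betti table SSY 6. Since the generators of the ideal come in three batches (three reducible quadrics $Q_i=x_iy$, the five Pfaffians of $M$, and the quartic $q$), the count of generators is already the correct $4+4+1$ required by the first column of SSY 6 once one notices that $\Pf_{\hat 1}M = x_0^2 - x_1^2 + x_2x_3$ is quadratic while the other four Pfaffians of $M$ are cubic.

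First I would use the quadrics $x_0y$, $x_1y$, $x_2y$ to split $C=C_1\cup C_2$ along the two coordinate subspaces $\{y=0\}\cong\PP^4_{\langle x_0,\dots,x_4\rangle}$ and $\{x_0=x_1=x_2=0\}\cong\PP^2_{\langle x_3,x_4,y\rangle}$. On $C_1$ the term $Py$ in $q$ vanishes and $C_1$ becomes the intersection of the codimension $3$ Gorenstein variety defined by the Pfaffians of $M$ (by Buchsbaum--Eisenbud applied to the $5\times 5$ skew matrix $M$) with the residual quartic $(d_0d+d_1b+d_2c)x_3-c^2+b^2$. On $C_2$ the linear form $d=d_0x_0+d_1x_1+d_2x_2$ and all five Pfaffians of $M$ vanish identically, so $C_2$ is simply the plane quartic cut out by $-c^2+b^2+Py$ in $\PP^2_{\langle x_3,x_4,y\rangle}$. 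The two pieces meet along the set $\{y=x_0=x_1=x_2=0\}$ in finitely many transverse points on the line $\PP^1_{\langle x_3,x_4\rangle}$, which together with the obvious degree counts for $C_1$ and $C_2$ pins down $\dim C=1$ and the total degree.

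The remaining step is to run the minimal free resolution in Magma with generic coefficients for $b$, $c$, the $d_i$ and $P$, and to read off that the Betti table agrees with SSY 6, following the methodology described in the introduction and the repository accompanying the paper. Since the Hilbert polynomial is determined by the Betti table, this simultaneously verifies that the family has the correct Hilbert polynomial. The main obstacle I would expect is the cosmetic but delicate verification that the given form of $q$ is sufficient: one must check that the coupling between $q$ and the Pfaffians of $M$ encoded by writing $d=d_0x_0+d_1x_1+d_2x_2$ is exactly what produces the two ``extra'' syzygies of degree $3$ and $5$ visible in the SSY 6 Betti table, rather than yielding a specialisation with a larger resolution. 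With generic choices this is transparent from the Magma output, and no further parameter count is required.
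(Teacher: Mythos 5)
Your approach is essentially the paper's own: the lemma carries no formal proof in the text, and the authors certify the Betti table by a Magma computation after recording precisely the decomposition you describe, namely $C = C_1 + C_2$ with $C_1 \subset \mathbb{P}^4_{\langle x_0,\dots,x_4\rangle}$ residual to the line $l=(x_0,x_1,x_2)$ in the codimension-three Pfaffian variety $\Gamma$, and $C_2$ a plane quartic in $\mathbb{P}^2_{\langle x_3,x_4,y\rangle}$. One small inaccuracy worth fixing: restricting $q$ to $\{x_0=x_1=x_2=0\}$ kills only the term $d_0 d$ (because $d\in(x_0,x_1,x_2)$, not $d_1$ or $d_2$), so $C_2$ is cut out by $(\bar d_1\bar b + \bar d_2\bar c)x_3 - \bar c^2 + \bar b^2 + Py$ rather than by $-c^2+b^2+Py$ alone; this does not change the argument, and you have in fact the correct ambient plane where the paper's own prose mistakenly writes $\mathbb{P}^2_{\langle x_0,x_1,x_2\rangle}$.
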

Brown, Kerber and Reid \cite{brown2012fano} discuss conditions that can be imposed on the entries of a matrix so that its Pfaffians lie in a given ideal. Two of the known solutions to this problem are called Tom and Jerry. The matrix $M$ is in the Jerry format, which appears frequently in this paper. Thus the Pfaffians of $M$ lie in the ideal $(x_0,x_1,x_2)$. It is an open problem as to whether we can construct similar deformations using the Tom matrix format instead.

We discuss the geometry of $C$. Let $\Gamma \subset \mathbb{P}^4_{\left<x_0,\dots,x_4\right>}$ be the codimension three variety cut out by the Pfaffians of $M$, and let $l$ be the line defined by $(x_0,x_1,x_2)$ in this copy of $\mathbb{P}^4$. Note that the Pfaffians of $M$ and the quartic $q$ are a set of minimal ideal generators for the colon ideal $(I_{\Gamma} \colon I_l)$. By looking at the quadrics defining $C$, we observe that $C=C_1+C_2$, where $C_1 \subset \mathbb{P}^4_{\left<x_0,\dots,x_4\right>}$ is the curve residual to $l$ in $\Gamma$, defined by the five Pfaffians of $M$ and the quartic $q$. On the other hand, $C_2 \subset \mathbb{P}^2_{\left<x_0,x_1,x_2\right>}$ is the plane quartic defined by $q$.
 The Pfaffians of $M$ are
    \begin{align*}
    {\Pf_{\hat{1}}} &= x_0^2 - x_1^2 + x_2x_3,\\
    {\Pf_{\hat{2}}} &= x_2c - x_1d,\\
    {\Pf_{\hat{3}}} &= x_2b-x_0d,\\
    {\Pf_{\hat{4}}} &= x_1b-x_0c,\\
    {\Pf_{\hat{5}}} &= x_0b-x_1c+x_3d.
\end{align*}
Notice that Pfaffians ${\Pf_{\hat{2}}}$, ${\Pf_{\hat{3}}}$ and ${\Pf_{\hat{4}}}$ may be written as three of the $2 \times 2$ minors of the matrix 
\begin{equation*}
N = 
    \begin{pmatrix}
    x_0 & x_1 & x_2 & 0 \\
    b & c & d & y
    \end{pmatrix},
\end{equation*}
with the other three minors giving the quadrics $Q_0,Q_1,Q_2$. Further, the remaining Pfaffians 
\begin{align*}
    Q_3 &= x_0^2 - x_1^2 + x_2x_3, \\
    F &= bx_0 - cx_1 + dx_3,
\end{align*}
are in the ``rolling factors'' format of Duncan Dicks \cite{reid1989surfaces} with $x_0 \rightarrow b$, $x_1 \rightarrow c$ and $x_2 \rightarrow d$. Moreover, since $d$ is in the ideal $(x_0,x_1,x_2)$ we may write $d=d_0x_0+d_1x_1+d_2x_2$. The quartic $q$ may be obtained from the cubic $F = (d_0x_0+d_1x_1+d_2x_2)x_3-cx_1+bx_0$ by rolling factors and adding an additional term in $y$, which gives
\begin{equation*}
q = (d_0d+d_1b+d_2c)x_3 - c^2 + b^2+Py.
\end{equation*}
\begin{proposition}
Let [441a]ai be the specialisation of the [441a]a family, where $P=yA$ for some quadric $A \in (x_3,x_4,y)$. Then a curve in the [441a]ai family may be deformed to the complete intersection of four quadrics. We show this explicitly by constructing a flat family over $\mathbb{A}^1$.
\end{proposition}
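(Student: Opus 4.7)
The plan is to adapt the deformation technique used throughout the paper: introduce a single parameter $t \in \mathbb{A}^1$ and perturb the matrix $M$ of \eqref{mat} at its unique zero entry $M_{15}$, together with a compatible adjustment of the quartic $q$, so that when $t$ is invertible the cubic Pfaffians and the quartic become redundant as ideal generators, leaving four quadrics defining a $(2,2,2,2)$ complete intersection.

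A row/column grading computation for $M$ (with row weights $(2,1,1,1,0)$ and column weights $(0,0,1,1,1)$) shows that $M_{15}$ has natural degree $2$, so the perturbation should be $M_{15}\mapsto tA$ for some quadric $A$. The specialisation hypothesis $P = yA$ with $A \in (x_3, x_4, y)$ singles out exactly the right quadric: it is what will allow the $y^2 A$ contribution to $q$ to be rewritten using the quadrics $Q_i = x_i y$ once the matrix is perturbed. I would also modify $q$ accordingly so that the limit at $t=0$ still recovers the [441a]ai curve.

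The main computation has two halves. For the cubic Pfaffians $\Pf_{\hat i}'$ ($i = 2,3,4,5$), Cramer's-rule syzygies of the form $t\cdot\Pf_{\hat i}'\in(Q_0',Q_1',Q_2',\Pf_{\hat 1})$ should follow from the rows of the perturbed matrix $M_t$, by direct analogy with the $[420]$a and $[562]$ai proofs. For the perturbed quartic $q'$, an analogous identity $t\cdot q'\in(Q_0',Q_1',Q_2',\Pf_{\hat 1})$ must be produced by using the rolling factors origin of $q$ from $\Pf_{\hat 5}$ and trading the $y^2 A$ term against $y$-multiples of $Q_i$. The four surviving quadrics must then be checked to cut out a codimension-$4$ complete intersection, which can be done directly or with Magma.

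The expected main obstacle is the quartic redundancy: unlike the Pfaffians, $q$ is an extra generator forced by the rolling factors construction, and its elimination rests delicately on matching the perturbation of $M_{15}$ to the specialisation $P = yA$. Once both syzygies are in hand, flatness is immediate via the Hilbert polynomial criterion of \cite[III.9, p.\,261]{MR0463157} used throughout the paper: both general and special fibres are codimension $4$ and degree $16$ in $\mathbb{P}^5$, so the Hilbert polynomial is constant in $t$ and the family is flat over $\mathbb{A}^1$.
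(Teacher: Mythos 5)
There is a genuine gap in your proposal, and it lies precisely in the choice of which matrix to perturb. The three reducible quadrics $Q_0=x_0y$, $Q_1=x_1y$, $Q_2=x_2y$ are \emph{not} Pfaffians of the skew-symmetric matrix $M$ of \eqref{mat}: the variable $y$ does not appear in $M$, so perturbing $M_{15}$ from $0$ to $tA$ only modifies $\Pf_{\hat 2}$, $\Pf_{\hat 3}$, $\Pf_{\hat 4}$ and leaves $Q_0,Q_1,Q_2$ untouched. But those three quadrics cut out $\{y=0\}\cup\{x_0=x_1=x_2=0\}$, so for any value of $t$ your deformed ideal still contains a hyperplane component and can never shrink to a $(2,2,2,2)$ complete intersection. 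The smoothing must change the $Q_i$ themselves.

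The paper's device, which you need here, is to notice that $\Pf_{\hat 2},\Pf_{\hat 3},\Pf_{\hat 4}$ together with $Q_0,Q_1,Q_2$ are the six $2\times 2$ minors of the non-skew $2\times 4$ matrix
\[
N=\begin{pmatrix} x_0 & x_1 & x_2 & 0 \\ b & c & d & y \end{pmatrix},
\]
and to perturb the zero entry of $N$ (degree $0$, so $0\mapsto t$). This simultaneously replaces $Q_i$ by $Q_i' = x_iy - (\,\cdot\,)t$ and makes $\Pf_{\hat 2},\Pf_{\hat 3},\Pf_{\hat 4}$ redundant via the usual Cramer's-rule syzygies. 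The remaining two Pfaffians also need deforming, by the rolling-factors pattern: $\Pf_{\hat 1}\mapsto Q_3'=\Pf_{\hat 1}+t^2A$ and $\Pf_{\hat 5}\mapsto F'=\Pf_{\hat 5}+tyA$, after which $F'$ and $q$ both become redundant when $t$ is invertible, exactly as in your second half. Your intuition about where the hypothesis $P=yA$ is used, and the final appeal to the Hilbert-polynomial criterion for flatness, are both correct; the missing ingredient is that the perturbation lives in $N$, not in $M$, and that $\Pf_{\hat 1}$ and $\Pf_{\hat 5}$ must be perturbed along with it.
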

\begin{proof}
Let $t \in \mathbb{A}^1$ be a deformation parameter. We deform the matrix $N$ to $N_{t}$ by replacing $0$ with $t$ in the top right entry:
\begin{equation*}
N_{t}=
    \begin{pmatrix}
    x_0 & x_1 & x_2 & t \\
    b & c & d & y
    \end{pmatrix}.
\end{equation*}
Then the quadrics are now given by
\begin{align*}
    Q_0' &= x_0y - bt, \\
    Q_1' &= x_1y - ct, \\
    Q_2' &= x_2y - dt.
\end{align*}

If $t$ is invertible then the other three minors are not needed to generate the ideal, since
\begin{align*}
    t(x_2b-x_0d) &= x_0Q_2'-x_2Q_0', \\ t(x_2c-x_1d)&=x_1Q_2'-x_2Q_1', \\ t(x_0c-x_1b)&=x_1Q_0'-x_0Q_1'.
\end{align*}
We further deform the remaining Pfaffians to
\begin{align*}
    Q_3' &= x_0^2 - x_1^2 + x_2x_3 + {t}^2A, \\
    F' &= bx_0 - x_1c + x_3d + {t}yA.
\end{align*}

The quartic remains unchanged. Again if $t$ is invertible the cubic Pfaffian is now a tautology since 
\begin{equation*}
    {t}(bx_0 - x_1c + x_3d + {t}yA) = x_1Q_1' - x_0Q_0' - x_3Q_2' + yQ_3'.
\end{equation*}
Similarly, we may write the quartic $q$ as 
\begin{equation*}
    tq =(c-x_3d_1)Q_1'-(b+x_3d_0)Q_0'-x_3d_2Q_2'+yF',
\end{equation*}
so this is also not needed to generate the ideal when $t$ is invertible. Consider the family of curves over $\mathbb{A}^1$ defined by the vanishing of \[I_t=(Q_0',Q_1',Q_2',Q_3',{\Pf_{\hat{2}}}, {\Pf_{\hat{3}}},{\Pf_{\hat{4}}},F',q).\] It follows from the above that the general fibre is a $(2,2,2,2)$ complete intersection defined by the vanishing of 
\begin{align*}
    Q_0' &= x_0y - bt, \\
    Q_1' &= x_1y - ct, \\
    Q_2' &= x_2y - dt, \\
    Q_3' &= x_0^2 - x_1^2 + x_2x_3 + {t}^2A.
\end{align*}
    
On the other hand if $t=0$ the curve defined by $I_t$ is in the [441a]ai family. Since the Hilbert polynomial at each fibre is independent of $t$ this is a flat family over $\mathbb{A}^1$.
\end{proof}

We now consider a different specialisation of the [441a]a family, which admits a deformation to [420]a. We replace the matrix $M$ \eqref{mat} used in the construction of the [441a]a family with the following degenerate version, where we have assumed certain linear dependencies between some entries in the $\text{Jer}_{45}$ matrix:
\begin{equation*}
M' = 
    \begin{pmatrix}
    & b & c & d & 0 \\
    & & x_3 & 0 & x_1 \\
    & & & x_0 & 0 \\
    & & & & x_2 \\
    \end{pmatrix}.
\end{equation*}
The three quadrics $Q_0 = x_0y$, $Q_1 = x_1y$ and $Q_2 = x_2y$ are unchanged. The additional quartic is now given by $bc+x_3cd_0+Ay^2$, and we further set $A=c$, so that $q=bc+x_3cd_0+cy^2$. We refer to this specialisation as [441a]aii. This is a degeneration of [441a]ai which no longer fits into the rolling factors format.
\begin{proposition}
A general curve in the family [441a]aii admits a deformation to a curve of Type~[420]ai, which is a specialisation of [420]a with one extra zero in the Pfaffian matrix.
\end{proposition}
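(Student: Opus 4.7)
The plan is to construct a flat family over $\mathbb{A}^1$ following the same strategy used throughout the section: introduce a degree zero parameter $t\in\mathbb{A}^1$, perturb one of the zero entries of the Pfaffian matrix $M'$ (and, if needed, one of the other generators), and verify that for $t$ invertible several of the original generators become redundant, leaving a presentation of a [420]ai curve.

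The first step is to choose the right perturbation. The salient features of [441a]aii are that the quartic factors as $q=c(b+x_3d_0+y^2)$ and that two of the cubic Pfaffians of $M'$, namely $\Pf_{\hat 2}=cx_2$ and $\Pf_{\hat 4}=-cx_1$, are also multiples of $c$. A natural move is therefore to replace one of the extra zeros of $M'$ (for instance the entry $m_{15}$, which was forced to zero when passing from [441a]a to [441a]aii) by a term in $t$ of the appropriate degree, and simultaneously to add a $t$-dependent correction to $q$ so that the factorisation through $c$ is broken.

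The second step is to verify the generator reductions. Having fixed the perturbation, I would check by direct calculation (or with Magma) that for $t$ invertible there are identities expressing $tq$ and two of the perturbed cubic Pfaffians as ring-linear combinations of the remaining generators, exactly analogous to the identities used in the proofs of the [420]a and [441a]ai propositions. This reduces the ideal to four quadrics and two cubics, and the third step interprets these six generators as the five Pfaffians of a $5\times 5$ skew-symmetric matrix in the [420]a format together with an extra general quadric, with one of the quadratic entries of the middle block equal to zero; this is exactly [420]ai.

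The main obstacle is the first step. Guessing a perturbation that is simultaneously clean enough to make the redundancies transparent and that produces a [420]a-shaped matrix with precisely one additional zero is where the real work lies. One promising strategy is to mimic the [441a]ai proof and perturb not $M'$ itself but an auxiliary $2\times 4$ matrix whose minors recover the three split quadrics $x_iy$ together with one or more of the Pfaffians; deforming an entry of that auxiliary matrix often induces exactly the type of reshuffling that merges the linear entries $x_i$ with the quadratic entries $b,c,d$ into the single quadratic block required by the [420]a format. Once the perturbation is fixed, flatness follows from constancy of the Hilbert polynomial, as in the preceding propositions of this section.
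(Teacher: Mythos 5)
Your overall framework is right and matches the paper's pattern: introduce a degree-zero parameter $t$, perturb some generators, exhibit explicit $t$-divisibility identities showing certain generators become redundant, and reassemble the survivors into a $5\times5$ Pfaffian format plus one extra quadric. You also correctly flag the real difficulty — choosing the perturbation — and correctly observe that the quartic $q$ factors through $c$, as do $\Pf_{\hat 2}$ and $\Pf_{\hat 4}$. But you have not resolved the difficulty you flag, and both candidate strategies you propose would not produce the deformation the paper actually uses.

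First, a factual slip: $m_{15}$ is zero in $M$ already (it is the top-right entry in the [441a]a matrix \eqref{mat}), so it is not one of the ``extra zeros'' introduced in passing to $M'$; the entries that were degenerated are $m_{24}$ and $m_{35}$ (both from $x_1$ to $0$). More importantly, perturbing any entry of $M'$ alone cannot work, because it leaves $Q_0=x_0y$, $Q_1=x_1y$, $Q_2=x_2y$ untouched, so the curve continues to break along $y=0$ and cannot become an irreducible curve of Type~[420]ai. The quadrics must themselves be deformed. Your fallback of perturbing the auxiliary $2\times4$ matrix $N$ does at least deform the $Q_i$, but putting $t$ in the top-right slot of $N$ gives $Q_0 \to x_0y - bt$, i.e.\ the [441a]ai deformation, whereas what is needed here is $Q_0'=x_0y+tc$ — a term in $c$, not $b$, and not a minor of any natural perturbation of $N$. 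The paper's proof perturbs $Q_0$ by $tc$ and $\Pf_{\hat 5}$ to $\Pf_{\hat 5}'=x_0b+x_3d-tyc$ (with $Q_1$, $Q_2$, $\Pf_{\hat 1}$, $\Pf_{\hat 3}$ unchanged); one then checks $tq=(b+d_0x_3)Q_0'-y\Pf_{\hat 5}'+d_1x_3Q_1+d_2x_3Q_2$ and similar identities for $\Pf_{\hat 2}$, $\Pf_{\hat 4}$, and packages the remaining five generators as the $4\times4$ Pfaffians of
\[
\begin{pmatrix} & x_2 & x_1 & 0 & 0 \\ & & tc & d & x_0 \\ & & & b & -x_3 \\ & & & & -y \end{pmatrix}
\]
together with $Q_0'$. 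The extra zero in the second row (compared with the generic [420]a matrix) is exactly what makes this [420]ai. So the gap in your argument is concrete: you must deform $Q_0$ (and $\Pf_{\hat 5}$) by terms involving $c$ rather than perturbing $M'$ or the minor structure of $N$, and neither of your proposed perturbation strategies would have led you to this.
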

\begin{proof}
The Pfaffians of $M'$ are now
 \begin{gather*}
        {\text{Pf}_{\hat{1}}} = x_0x_1+x_2x_3, \quad
{\text{Pf}_{\hat{2}}} = x_2c, \quad
{\text{Pf}_{\hat{3}}} = x_2b - x_1d, \\
{\text{Pf}_{\hat{4}}} = -x_1c, \quad
{\text{Pf}_{\hat{5}}} = x_0b + x_3d.
\end{gather*}
Let $t \in \mathbb{A}^1$ be a degree 0 deformation parameter. We deform $Q_0$ and  ${\text{Pf}_{\hat{5}}}$ to 
\begin{equation*}
Q_0' = x_0y + tc, \quad {\text{Pf}_{\hat{5}}}' = x_0b + x_3d - tyc.
\end{equation*} 
When $t$ is invertible, we can eliminate ${\text{Pf}_{\hat{2}}}$, ${\text{Pf}_{\hat{4}}}$ and the additional quartic $q$. For example,
\begin{equation*}
tq = (b+d_0x_3)Q_0' -y\text{Pf}_{\hat5}'+d_1x_3Q_1+d_2x_3Q_2.\\
\end{equation*}

The remaining three Pfaffians along with $Q_1$ and $Q_2$ fit into the $4 \times 4$ Pfaffians of the $5 \times 5$ skew-symmetric matrix
\begin{equation}\label{eq:[420]i-matrix}
    \begin{pmatrix}
    & x_2 & x_1 & 0 & 0 \\
    & & tc & d & x_0 \\
    & & & b & -x_3 \\
    & & & & -y \\
    \end{pmatrix}.
\end{equation}
These Pfaffians and the last quadric $Q_0'$ define a curve in the [420]ai family. This is a degeneration of [420]a because the matrix \eqref{eq:[420]i-matrix} has an extra zero entry. Thus the ideal $I_t=(Q_0',Q_1,Q_2,{\text{Pf}_{\hat{1}}},{\text{Pf}_{\hat{2}}},{\text{Pf}_{\hat{3}}},{\text{Pf}_{\hat{4}}},{\text{Pf}_{\hat{5}}}',q)$ defines a curve of Type~[441a]aii when $t=0$, and a curve of Type~[420]ai when $t$ is invertible.
\end{proof}

\subsection{Family~[441b]a}\label{[441b]}
We now discuss the family of curves associated to the stratum $\mathcal{F}_{[441b]}$ with Betti table SSY 6.
\begin{lemma}
    Let $x_0,x_1,y_0,y_1,z_0,z_1$ be coordinates on $\mathbb{P}^5$, and further let $A,B,C,D,P,M,N,Q$ be quadratic forms. Consider the matrices 
     \begin{equation*}
N_1 = 
    \begin{pmatrix}
    x_0 & D' & B' \\
    x_1 & C' & A' \\
    \end{pmatrix}, \quad N_2 = 
    \begin{pmatrix}
    y_0 & Q' & N' \\
    y_1 & P' & M' \\
    \end{pmatrix},
\end{equation*}
where $A',B',C',D'$ are given by the restriction of $A,B,C,D$ to $y_0=y_1=0$ and $M',N',P',Q'$ are given by the restriction of $M,N,P,Q$ to $x_0=x_1=0$. Let $C_1 \subset \mathbb{P}^3_{\left<x_0,x_1,z_0,z_1\right>}$ be defined by the vanishing of the $2 \times 2$ minors of $N_1$ and $C_2 \subset \mathbb{P}^3_{\left<y_0,y_1,z_0,z_1\right>}$ be defined by the vanishing of the $2 \times 2$ minors of $N_2$. Suppose further that the quartics $A'D'-B'C'$ and $P'N'-Q'M'$ agree on $\mathbb{P}^1_{\left<z_0,z_1\right>}$. Then $C=C_1+C_2 \subset \mathbb{P}^5$ is a Gorenstein codimension four curve with Betti table SSY 6, corresponding to the stratum $\mathcal{F}_{[441b]}$. We call this family of curves $[441b]a$.
\end{lemma}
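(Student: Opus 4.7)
The proof will split into identifying the geometry of $C$, exhibiting the $9$ ideal generators explicitly, and finally confirming the Betti table. The plan is as follows.

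First, I would extract the four reducible quadric generators $Q_{ij} = x_i y_j$ for $i, j \in \{0,1\}$. Their zero locus is exactly the union of the two $\mathbb{P}^3$s meeting along the line $\ell = \mathbb{P}^1_{\left<z_0, z_1\right>}$, so any scheme containing them must split as $C = C_1 + C_2$ with $C_1 \subset \mathbb{P}^3_{\left<x_0, x_1, z_0, z_1\right>}$ and $C_2 \subset \mathbb{P}^3_{\left<y_0, y_1, z_0, z_1\right>}$ meeting transversely along $\ell$. The Hilbert--Burch theorem applied to the $2 \times 3$ matrix $N_i$ then realises each $C_i$ as an arithmetically Cohen--Macaulay curve in its ambient $\mathbb{P}^3$, cut out by the three $2 \times 2$ minors, namely two cubics (of the shapes $x_0 C' - x_1 D'$ and $x_0 A' - x_1 B'$) and one quartic ($A'D' - B'C'$). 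A direct count on each component gives combined degree $16$ and arithmetic genus $17$, matching the SSY 6 stratum in Table \ref{tab:table12}.

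Next I would lift these equations to global equations on $\mathbb{P}^5$. The two cubic minors of $N_1$ lift naturally to $F_1 = x_0 C - x_1 D$ and $F_2 = x_0 A - x_1 B$; since these have no constant term in $y_0, y_1$, they vanish on $C_2$ automatically and hence on all of $C$. Symmetric cubics $G_1, G_2$ arise from $N_2$ and vanish on $C_1$. The delicate point is the quartic: the restrictions of any global quartic $H$ on $\mathbb{P}^5$ to the two $\mathbb{P}^3$s must themselves restrict compatibly to $\ell$. A Mayer--Vietoris argument for the reducible scheme $\mathbb{P}^3 \cup \mathbb{P}^3$ identifies this compatibility as precisely the stated hypothesis that $A'D' - B'C'$ and $P'N' - Q'M'$ agree on $\ell$, which then produces a global quartic $H$. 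Altogether this yields the $4 + 4 + 1 = 9$ claimed ideal generators.

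To pin down the Betti numbers, I would compute the minimal free resolution on a generic choice of the data using the Magma script in the paper's repository, following the pattern used throughout the rest of the paper. Structurally, the Gorenstein self-duality and the shape of the resolution should be forced by splicing together the two Hilbert--Burch resolutions of the $C_i$ with the Koszul-type resolution of the reducible locus $\mathbb{P}^3 \cup \mathbb{P}^3$ via a mapping cone. The main obstacle I foresee is confirming that the listed $9$ generators are both minimal and generate the saturated ideal (and not merely a primary component), and that no extra generators or syzygies appear beyond those prescribed by SSY 6; this reduces to a Gr\"obner basis calculation on a generic example, which is what the Magma verification is designed to handle.
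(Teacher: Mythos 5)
Your proposal is correct and takes essentially the same approach as the paper, which states this lemma without a written proof beyond noting that each $C_i$ is degree $8$, genus $7$, residual to a line in a complete intersection, and delegating the Betti table to the Magma verification described in the introduction. Your structural sketch (decomposing along the four reducible quadrics $x_iy_j$, applying Hilbert--Burch to each $N_i$, and gluing the two restricted quartics into a single global $H$ via the compatibility condition on $\ell$) is consistent with the paper's explicit equations for the specialisation $[441b]$ai, where the nine minors of a $3\times3$ matrix appear exactly as $4$ quadrics, $4$ cubics and $1$ quartic, matching the first column of SSY~6.
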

The curves $C_1$ and $C_2$ are both degree 8 and genus 7, and are each residual to a line in a complete intersection. We consider the specialisation [441b]ai where $P=A$, $Q=C$, $M=B$ and $N=D$, so that \[H=AD-BC=PN-QM\] is the quartic in the ideal of $C$. The equations defining $C$ are thus given by the $2 \times 2$ minors of the matrix
\begin{equation*}
N=
    \begin{pmatrix}
    0 & x_0 & x_1 \\
    y_0 & D & C \\
    y_1 & B & A \\
    \end{pmatrix}.
\end{equation*}

\begin{proposition}
Any curve in the [441b]ai family may be deformed to the complete intersection of four quadrics. We explicitly construct a flat family over $\mathbb{A}^1$ whose general fibre is in the [400]a family and whose special fibre is in the [441b]ai family.
\end{proposition}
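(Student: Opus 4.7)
The plan is to mirror the pattern used in the deformations of [441a]ai and [562]bi earlier in the paper: introduce a degree-zero parameter $t \in \mathbb{A}^1$ into the distinguished zero entry of $N$. Specifically, I would set
\[
N_t = \begin{pmatrix} t & x_0 & x_1 \\ y_0 & D & C \\ y_1 & B & A \end{pmatrix},
\]
let $I_t$ be the ideal generated by the nine $2\times 2$ minors of $N_t$, and note at once that $I_0$ defines a general curve of Type [441b]ai. The four minors that involve the top-left entry deform to the quadrics
\[
Q_1' = tD - x_0 y_0,\quad Q_2' = tC - x_1 y_0,\quad Q_3' = tB - x_0 y_1,\quad Q_4' = tA - x_1 y_1,
\]
while the four cubic minors and the determinantal quartic $H = AD - BC$ are unchanged.

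The key step is to show that after inverting $t$, the cubic minors and the quartic lie in $(Q_1',Q_2',Q_3',Q_4')$. For each of the four cubic minors this is a direct one-line identity, such as
\[
t(x_0 C - x_1 D) = x_0 Q_2' - x_1 Q_1',
\]
together with three analogous identities obtained by interchanging the rows and columns of $N_t$. For the quartic, substituting $tA = x_1 y_1 + Q_4'$ and $tB = x_0 y_1 + Q_3'$ gives
\[
tH = D Q_4' - C Q_3' - y_1(x_0 C - x_1 D),
\]
and feeding the above cubic identity back in yields $t^2 H = t(D Q_4' - C Q_3') - y_1(x_0 Q_2' - x_1 Q_1')$, so $H$ too is redundant when $t$ is invertible.

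It remains to identify the general fibre. For generic choices of $A,B,C,D$, the four deformed quadrics $Q_1',\dots,Q_4'$ are sufficiently generic in $\PP^5$ to cut out a $(2,2,2,2)$ complete intersection of degree $16$; being a complete intersection is an open condition which can be verified on a single explicit example with Magma as elsewhere in the paper. Hence $C_t$ is a curve of Type [400]a for $t$ invertible and a curve of Type [441b]ai at $t=0$. Since both strata have the same degree, dimension, and Betti-table-determined Hilbert polynomial, flatness of $I_t$ over $\mathbb{A}^1$ follows from the Hilbert polynomial criterion used consistently throughout the paper. The main technical obstacle is the double cancellation required to eliminate the quartic, but the short calculation above shows this reduces to routine bookkeeping once the cubic identities are in hand.
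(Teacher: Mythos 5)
Your proposal is correct and follows essentially the same approach as the paper's proof: introducing $t$ into the distinguished zero entry of $N$, showing the four cubic minors and the quartic become redundant once $t$ is inverted, and deducing flatness from constancy of the Hilbert polynomial. The only cosmetic difference is that the paper expresses $tH$ directly as a combination of the deformed quadrics and one undeformed cubic (namely $tH = DQ_4' - BQ_3' + x_1G_2$) rather than your two-step computation producing $t^2 H$, but the conclusion is identical.
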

\begin{proof}
Introduce a degree 0 deformation parameter $t \in \mathbb{A}^1$ and consider the deformed matrix
\begin{equation*}
N_t=
    \begin{pmatrix}
    -t & x_0 & x_1 \\
    y_0 & D & C \\
    y_1 & B & A \\
    \end{pmatrix}.
\end{equation*}
The $2 \times 2$ Pfaffians of this matrix define a $(2,2,2,2)$ complete intersection. The quadrics become
\begin{align*}
    Q_1' &= x_0y_0 + tD, \\
    Q_2' &= x_0y_1 + tB, \\
    Q_3' &= x_1y_0 + tC, \\
    Q_4' &= x_1y_1 + tA.
\end{align*}
    
If $t$ is invertible then the four cubics and quartic are tautologies. Indeed, we have
\begin{align*}
    tF_1 &= x_0Q_4' - x_1Q_2', \\
    tF_2 &= x_0Q_3' - x_1Q_1', \\
    tG_1 &= y_0Q_4' - y_1Q_3', \\
    tG_2 &= y_0Q_2' - y_1Q_1', \\
    tH &= DQ_4' - BQ_3' + x_1G_2.
\end{align*}

We see that $(Q_1',Q_2',Q_3',Q_4')$ defines a complete intersection. We again consider the family of curves defined by ideals of the form 
\begin{equation*}
I_t=(Q_1',Q_2',Q_3',Q_4',F_1,F_2,G_1,G_2,H).
\end{equation*}
As shown above, the general fibre is the complete intersection of the four quadrics $(Q_1',Q_2',Q_3',Q_4')$, whereas the special fibre is in the [441b]ai family.
\end{proof}

We now exhibit a specialisation [441b]aii of the [441b]ai family which admits a deformation to [430]a. Let [441b]aii be the specialisation of [441b]ai with $A=x_0a+y_0d$ and $C=x_0c+y_1b$, so that the matrix $N$ becomes:
\begin{equation*}
    N=
    \begin{pmatrix}
    0 & x_0 & x_1 \\
    y_0 & D & x_0c + y_1b \\
    y_1 & B & x_0a + y_0d \\
    \end{pmatrix}.
\end{equation*}

\begin{proposition}
There is a flat family over $\mathbb{A}^1$ whose special fibre at $t=0$ is in the [441b]aii family and whose general fibre when $t$ is invertible is in the [430]a family.
\end{proposition}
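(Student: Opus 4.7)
The plan is to proceed by analogy with the deformation from [441b]ai to [400]a, perturbing the $(1,1)$ entry of the matrix $N$ from $0$ to $-t$. Because the specialisation [441b]aii replaces the generic entries $A$ and $C$ by the rolling-factors expressions $A = x_0a+y_0d$ and $C = x_0c+y_1b$, the resulting deformation should not reach a $(2,2,2,2)$ complete intersection but will instead land in the stratum [430]a.

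First I would write down the perturbed matrix
\[
N_t = \begin{pmatrix}
-t & x_0 & x_1 \\ y_0 & D & x_0c+y_1b \\ y_1 & B & x_0a+y_0d
\end{pmatrix}
\]
and list its nine $2\times 2$ minors. Two of these remain quadrics, namely $Q_1' = -tD - x_0y_0$ and $Q_3' = -tB - x_0y_1$, while the two $2\times 2$ minors that formerly gave the remaining two quadrics of [441b]aii become cubics because of the factorisation of $A$ and $C$. The four cubic minors and the quartic minor of $N$ are unchanged by the perturbation. I would then verify by direct syzygy computation, as in the earlier propositions, which of the unchanged cubics and the quartic become redundant modulo $Q_1'$, $Q_3'$ and the two new cubics when $t$ is invertible.

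Since [430]a has four quadric and three cubic generators while [441b]aii has four quadric, four cubic and one quartic generator, the deformation must produce two extra quadric relations beyond $Q_1'$ and $Q_3'$. I expect these to appear either as hidden syzygies arising from the rolling-factors form of $A$ and $C$, or as auxiliary equations added to the perturbation in the spirit of the [562]bi $\to$ [550]a proof. Once such quadrics are available, the surviving seven equations can be repackaged into the Cramer's rule format
\[
M = \begin{pmatrix} Q & P & A & C \\ -x_0 & x_1 & 0 & 0 \\ x_1 & -x_2 & 0 & 0 \end{pmatrix}, \quad v = \begin{pmatrix} y_2 \\ y_1 \\ D \\ -B \end{pmatrix}, \quad s = z,
\]
via an explicit correspondence between the [441b]aii data $(x_0,x_1,y_0,y_1,z_0,z_1,a,b,c,d,B,D,t)$ and the [430]a coordinates and parameters. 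Setting $t = 0$ recovers [441b]aii by construction; flatness over $\mathbb{A}^1$ then follows from constancy of the Hilbert polynomial across fibres, as in the previous propositions.

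The main obstacle is producing the two extra quadric relations and matching the resulting ideal precisely to the [430]a Cramer's rule format. The direct syzygies of the minors of $N_t$ do not automatically yield four quadric generators, so either hidden quadric relations must be exhibited or compatible auxiliary equations must be introduced, and then the resulting seven-generator ideal has to be identified term-by-term with the [430]a presentation, in particular reconstructing an effective third variable to play the role of $x_2$ out of the available data.
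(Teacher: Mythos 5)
Your proposal does not prove the proposition, because the deformation you choose lands in the wrong stratum. Putting $-t$ in the $(1,1)$ entry of $N$ is exactly the deformation the paper uses for \textbf{[441b]ai} $\to$ \textbf{[400]a}. Since [441b]aii is a further specialisation of [441b]ai, the same perturbation of the [441b]aii matrix (for general parameters) again produces a $(2,2,2,2)$ complete intersection, i.e.\ a curve of Type~[400]a, not [430]a. Your claim that ``the two $2\times 2$ minors that formerly gave the remaining two quadrics \dots become cubics'' is also incorrect: since $a,b,c,d$ are linear forms, $A=x_0a+y_0d$ and $C=x_0c+y_1b$ are still quadratic, so the minors $-tA - x_1y_1$ and $-tC - x_1y_0$ remain quadrics. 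There is no drop in the number of quadric generators to remedy, and no hidden quadric relations to find; the actual problem, which you do not resolve, is that perturbing the corner entry destroys the reducibility of all four quadrics.

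The structural point your plan misses is that in the [430]a presentation two of the four quadrics, $zy_1$ and $zy_2$, must remain reducible; they come from the vanishing columns in $\bigwedge^3 M = sv$. So the correct deformation must preserve the reducibility of two of the quadrics of [441b]aii. The paper achieves this by leaving $Q_1 = x_1y_1$ and $Q_2 = x_1y_0$ untouched (these become $zy_1, zy_2$ with $z = x_1$) and deforming only
\begin{align*}
Q_3' &= x_0y_0 - tby_1, & Q_4' &= x_0y_1 - tdy_0,\\
F_1' &= Bx_1 - a(x_0^2 - t^2bd), & F_2' &= Dx_1 - c(x_0^2 - t^2bd),\\
F_3' &= D(y_1+ta) - B(y_0+tc). & &
\end{align*}
Then $tF_4 = y_1Q_3' - y_0Q_4'$ and $tH = -x_0F_3' - BQ_3' + DQ_4'$, so $F_4$ and $H$ become redundant, and the surviving seven generators fit the Cramer's rule presentation of [430]a with
\[
M=\begin{pmatrix}\tfrac1t D & -\tfrac1t B & a & c\\ -tb & x_0 & 0 & 0\\ x_0 & -td & 0 & 0\end{pmatrix},\qquad
v=\begin{pmatrix}y_1\\ y_0\\ D\\ -B\end{pmatrix},\qquad s=x_1.
\]
This is not a perturbation of the matrix $N$ at all, but a hand-crafted perturbation of individual generators designed to keep two of the quadrics split. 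Your proposal would need to abandon the $(1,1)$-entry perturbation entirely and construct such a deformation from scratch.
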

\begin{proof}
The $2\times2$ minors of $N$ can be simplified using the quadrics $Q_i$ to give the following equations for the fibre over $t=0$:
\begin{gather*}
Q_1=x_1y_1,\quad Q_2=x_1y_0,\quad Q_3=x_0y_0,\quad Q_4=x_0y_1, \\
F_1=Bx_1-ax_0^2,\quad F_2=Dx_1-cx_0^2,\quad F_3=Dy_1-By_0, \quad F_4=y_0^2d-y_1^2b, \\
H=D(x_0a+y_0d)-B(x_0c+y_1b).
\end{gather*}
We deform two of the quadrics and three of the cubics to:
\begin{gather*}
  Q_3' = x_0y_0 - tby_1, \quad Q_4' = x_0y_1 - tdy_0, \quad F_1' = Bx_1 - a(x_0^2 - t^2bd), \\
     F_2' = Dx_1 - c(x_0^2 - t^2bd), \quad F_3' = D(y_1+ta) - B(y_0+tc).
\end{gather*}
Then we have 
\begin{equation*}
tF_4 = y_1Q_3' -y_0Q_4',\quad tH = -x_0F_3' -BQ_3'+DQ_4',
\end{equation*}
so that when $t$ is invertible, $F_4$ and $H$ are not needed as ideal generators.

We show that the remaining seven ideal generators fit into family [430]a. Recall the Cramer's rule format for the [430]a family, defined using a matrix $M$, vector $v$ and parameter $s$. Choosing $M,v,s$ to be 
\begin{equation*}
M = 
    \begin{pmatrix}
    \tfrac{1}{t}D & -\tfrac{1}{t}B & a & c \\
    -tb & x_0 & 0 & 0 \\
    x_0 & -td & 0 & 0
    \end{pmatrix}, \quad
v = 
    \begin{pmatrix}
    y_1 \\
    y_0 \\
    D \\
    -B
    \end{pmatrix}, \quad
    s = x_1.
\end{equation*}
expresses the seven remaining ideal generators as an element of [430]a
Note that the quadratic forms $D,B$ appear both in $M$ and $v$. 

In conclusion the ideal $I_t=(Q_1,Q_2,Q_3',Q_4',F_1',F_2',F_3',F_4,H)$ defines a family of curves $C_t$, where the special fibre at $t=0$ is in the [441b]aii family, and is in the [430]a family when $t$ is invertible.
\end{proof}

The degeneration from [441b]ai to [441b]aii breaks the two degree 8 components $C_i$ of a curve in the [441b]ai family into $C_i'+\PP^1_{\left<z_0,z_1\right>}$. Thus a curve of Type~[441b]aii contains the residual line $\PP^1_{\left<z_0,z_1\right>}$ as a nonreduced component. The above deformation to [430]a smooths this double line into the residual conic in the construction of the general curve of Type~[430]a curve. We do not know if there is a reduced curve of Type~[441b]ai which deforms to [430]a.

\section{Deformations in degree 17}\label{sec4}
We describe three families in degree 17, along with their specialisations. The results for this section can be seen in figure \ref{fig:deg17}.

\begin{figure}[ht]
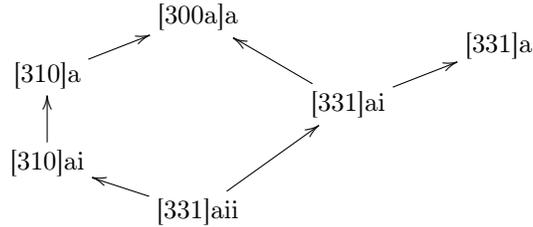

\[\xygraph{
!{<0cm,0cm>;<1cm,0cm>:<0cm,1.3cm>::}
!{(0,5)}*+{{\text{[300a]a}}}="1"
!{(-2,4.4)}*+{{\text{[310]a}}}="2"
!{(4,4.7)}*+{{\text{[331]a}}}="3"
!{(2,4.1)}*+{{\text{[331]ai}}}="4"
!{(0,3)}*+{{\text{[331]aii}}}="5"
!{(-2,3.5)}*+{{\text{[310]ai}}}="6"
"2":"1"
"4":"1"
"5":"6"
"4":"3"
"5":"4"
"6":"2"
}\]\caption{Degree 17 strata and incidences}\label{fig:deg17}
\end{figure}


\subsection{Family~[300a]a}
Family~[300a]a is one of the nonsingular families of \cite{coughlan2016arithmetically} (see also \cite[\S 2.8]{Papadakis2000Kustin--MillerComplexes}).
\begin{lemma}
    Let $M$ be a $3 \times 4$ matrix and $v$ a $4 \times 1$ column vector with linear entries, with a further degree 2 parameter $s$. Then the variety defined by the equations 
    \[Mv=0, \quad \bigwedge^3 M=sv\]
    has Betti table CGKK 4. This is family [300a]a, and it is a lift of the stratum $\mathcal{F}_{[300a]}$ in the space of quartics in four variables.
\end{lemma}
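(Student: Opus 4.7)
The plan is to recognize this variety as a standard instance of the Kustin--Miller unprojection setup and then verify its dimension, Gorenstein property, and Betti table directly. First I count generators: $Mv=0$ supplies three quadrics (one per row of $M$), while $\bigwedge^3 M = sv$ supplies four cubics (one per entry of $v$, since $\deg(sv_i)=2+1=3$ matches the degree of each $3\times 3$ minor of $M$). The total of seven generators is compatible with the first row $b_{12}b_{23}b_{34} = 300$ of Betti table CGKK 4.

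Next, I would invoke the general ``Cramer's rule'' construction recalled in the introduction and explained in \cite[\S 2.8]{Papadakis2000Kustin--MillerComplexes}: with $M$ a $3\times 4$ matrix of linear forms, $v$ a $4 \times 1$ vector of linear forms, and $s$ a general quadric in six variables, the equations $Mv=0$ and $\bigwedge^3 M=sv$ cut out a codimension four Gorenstein ideal whose Kustin--Miller resolution is symmetric of length four. The degrees of $M$, $v$ and $s$ prescribe all the twists in that resolution, and one checks by hand that they assemble into exactly the CGKK 4 pattern. To rule out accidental syzygies or a jump in codimension for the generic member, I would run one explicit example with random coefficients in Magma, as in the GitHub scripts accompanying the paper; by upper semicontinuity of Betti numbers and lower semicontinuity of dimension, the generic fibre then has the same Betti table.

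Finally, for the lift assertion, I would cut the coordinate ring by a general regular sequence of four linear forms to obtain an Artinian Gorenstein quotient, extract its Macaulay inverse system to produce an explicit quartic $F$ in four variables, and verify that $F$ lies in the stratum $\mathcal{F}_{[300a]}$ identified in \cite{kapustka2021quaternary}. The point to watch is that the resulting quartic should land in $\mathcal{F}_{[300a]}$ rather than in some adjacent stratum with the same Betti table; I expect this to be the main obstacle in principle, but in practice it reduces to a concrete finite computation matching invariants (for example the rank stratification of the associated catalecticant matrices) against the description of $\mathcal{F}_{[300a]}$ in \cite{kapustka2021quaternary}, which can again be handled by Magma.
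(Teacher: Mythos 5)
Your proposal matches the paper's approach: the paper provides no written proof for this lemma, treating it as a known construction from \cite{coughlan2016arithmetically} and \cite[\S 2.8]{Papadakis2000Kustin--MillerComplexes}, and its general methodology (stated in the introduction) is exactly what you describe — recognise the Cramer's rule/Kustin--Miller format, then verify dimension and Betti table on a random example in Magma and appeal to semicontinuity. One small slip: to reach an Artinian quotient in four variables for the lift check you cut by a regular sequence whose length depends on the ambient space (two general linear forms for the curve in $\PP^5$, four only in the threefold-in-$\PP^7$ setting), so ``four linear forms'' is not uniformly correct.
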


\begin{table}[h!]
    \begin{equation*}\begin{array}{c|c c c c c}
        & 0 & 1 & 2 & 3 & 4 \\ \hline
       0 & 1 & - & - & - & - \\
      1 & - & 3 & - & - &-\\
      2 &- & 4 & 12 & 4 & -\\
      3 &- & - & - & 3 & - \\
      4 & - & - & - & - & 1
    \end{array}
\end{equation*}
\caption*{Betti table CGKK 4~\cite{schenck2020calabiyau}}
\label{tab:table8}
\end{table}

\subsection{Family~[310]a}\label{sec!type-[310]}
We next describe family~[310]a.
\begin{table}[h!]
    \begin{equation*}\begin{array}{c|c c c c c}
        & 0 & 1 & 2 & 3 & 4 \\ \hline
       0 & 1 & - & - & - & - \\
      1 & - & 3 & 1 & - &-\\
      2 &- & 5 & 12 & 5 & -\\
      3 &- & - & 1 & 3 & - \\
      4 & - & - & - & - & 1
    \end{array}
\end{equation*}
\caption*{Betti table SSY 2~\cite{schenck2020calabiyau}}
\label{tab:table9}
\end{table}
\begin{lemma}
    Let $x_0,\dots,x_7,y$ be coordinates on $\mathbb{P}^8$ and let $M$ be the $5 \times 5$ skew-symmetric matrix
    \begin{equation*}
    M=
    \begin{pmatrix}
    & A & B & C & D \\
    & & x_2 & x_3 & x_4 \\
    & & & x_5 & x_6 \\
    & & & & x_7
    \end{pmatrix}
\end{equation*}
where the entries of the first row are
    \[A=a_0x_0+a_1x_1,\quad B=b_0x_0+b_1x_1, \quad C=c_0x_0+c_1x_1, \quad D=d_0x_0+d_1x_1,\]
with $a_i,b_i,c_i,d_i$ linear forms.
The $4 \times 4$ Pfaffians of $M$ are four cubics and one quadric, $\Pf_{\hat{1}}$. Let
\[Q_0=x_0y, \quad Q_1=x_1y, \quad Q_2=\Pf_{\hat{1}}+yL,\]
where $L$ is a linear form. Next define the cubic 
\begin{multline}\label{eq![310]-cubic}
H=(c_0d_1-c_1d_0)x_2-(b_0d_1-b_1d_0)x_3+(a_0d_1-a_1d_0)x_4 \\
+(b_0c_1-b_1c_0)x_5-(a_0c_1-a_1c_0)x_6+(a_0b_1-a_1b_0)x_7,
\end{multline}
and let $E$ be a general quadratic form.
Then the cubic Pfaffians of $M$, along with $Q_0,Q_1,Q_2$ and $H+yE$ define a fourfold $X\subset\mathbb{P}^8$ with Betti table SSY 2. Intersecting $X$ with a $5$-dimensional linear subspace gives a family of curves in $\PP^5$ which we call [310]a.
\end{lemma}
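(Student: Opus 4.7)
The plan is to exploit the two reducible quadric generators to decompose $X$ set-theoretically, describe each component, verify the dimension count, and then invoke a Magma computation to confirm the Betti table, following the strategy used throughout the paper.

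Since $Q_0 = x_0 y$ and $Q_1 = x_1 y$ both lie in the ideal, $X$ splits as $X_1 \cup X_2$, where $X_1 \subset \{y = 0\} \cong \PP^7$ and $X_2 \subset \{x_0 = x_1 = 0\} \cong \PP^6$. On $X_2$ the linear forms $A, B, C, D$ vanish identically, so the cubic Pfaffians $\Pf_{\hat{2}}, \dots, \Pf_{\hat{5}}$ vanish as well, leaving only the restrictions of $Q_2$ and $H+yE$. For general $E$, this presents $X_2$ as a $(2,3)$ complete intersection in $\PP^6$ of dimension $4$, which already gives a fourfold component of $X$. On the $y = 0$ side, the ideal restricts to the five $4\times 4$ Pfaffians of $M$ together with the cubic $H$. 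Because $A, B, C, D \in (x_0, x_1)$, the Pfaffian variety sits in the Kustin--Miller / Papadakis--Reid unprojection format: the coefficients of $x_2, \dots, x_7$ in $H$ are exactly the $2\times 2$ minors of $\begin{pmatrix} a_0 & b_0 & c_0 & d_0 \\ a_1 & b_1 & c_1 & d_1 \end{pmatrix}$, that is, the Jacobian of $(A, B, C, D)$ with respect to $(x_0, x_1)$, which is precisely the equation produced when one unprojects the divisor $\{x_0 = x_1 = 0\}$ out of the codimension-3 Pfaffian variety of $M$ in $\PP^7$. This realises $X_1$ as a codimension-4 Gorenstein threefold meeting $X_2$ along the surface $\{y = x_0 = x_1 = 0\} \cap X$.

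With the dimension and geometric structure established, the Betti table is then verified by a direct Magma computation on a random specialisation of the parameters $a_i, b_i, c_i, d_i, L, E$, using the code in the repository cited in the introduction, and the output matches SSY 2 exactly. The claim about $5$-dimensional linear sections is immediate from this, since a general linear section preserves the Betti table and cuts out a curve of the required degree and genus in $\PP^5$, which is by definition family [310]a. The main obstacle is not the dimension count, which is transparent, but identifying the precise form of $H$ in \eqref{eq![310]-cubic}: a generic cubic in its place would generically enlarge the ideal with additional minimal generators and destroy the Betti table. The unprojection interpretation above explains why this particular $H$ is forced, and the Magma verification then confirms that the resulting ideal attains SSY 2 on the nose.
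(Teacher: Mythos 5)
Your proposal follows essentially the same route as the paper: use the reducible quadrics $Q_0=x_0y$ and $Q_1=x_1y$ to decompose $X=X_1+X_2$, identify $X_2$ as a $(2,3)$ complete intersection in $\PP^6_{\left<x_2,\dots,x_7,y\right>}$, construct $X_1 \subset \PP^7_{\left<x_0,\dots,x_7\right>}$ from the Pfaffians of $M$ together with the extra cubic $H$, and then let Magma certify the Betti table before passing to a general linear section. Your observation that the coefficients of $H$ are precisely (up to sign) the $2\times2$ minors of $\begin{pmatrix}a_0&b_0&c_0&d_0\\a_1&b_1&c_1&d_1\end{pmatrix}$ is correct and is the structural reason why $H$ is the right extra generator. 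The paper frames this as residual intersection rather than unprojection: letting $\Gamma$ be the Pfaffian variety in $\PP^7$ and $Y=V(x_0,x_1,\Pf_{\hat{1}})$, one checks that $Y$ is a component of $\Gamma$ and that $I_\Gamma:I_Y$ is generated by the five Pfaffians together with $H$, so $X_1$ is residual to $Y$ in $\Gamma$; no new variable is adjoined, so this is linkage rather than Kustin--Miller unprojection.

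There is, however, a dimension slip in your description of $X_1$. The Pfaffian variety $\Gamma\subset\PP^7$ has codimension $3$, and so does $Y$, hence the residual $X_1$ is a \emph{fourfold} in $\PP^7$, not a codimension-four threefold. This is also forced by the statement of the lemma itself: $X_2$ is a fourfold, and $X=X_1+X_2$ must be equidimensional for its coordinate ring to be Cohen--Macaulay, so $X_1$ cannot be three-dimensional. Similarly $X_1\cap X_2 = X\cap\{y=x_0=x_1=0\}$ is the $(2,3)$ complete intersection $V(\Pf_{\hat{1}},H)$ inside $\PP^5_{\left<x_2,\dots,x_7\right>}$, a \emph{threefold}, not a surface. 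These are errors of fact rather than of method; once corrected, your argument runs parallel to the paper's.
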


 Since the quadrics $Q_0$ and $Q_1$ are reducible, we observe that $X=X_1+X_2$, with $X_1 \subset \mathbb{P}^7_{\left<x_0,\dots,x_7\right>}$ and $X_2 \subset \mathbb{P}^6_{\left<x_2,\dots,x_7,y\right>}$. Let $\Gamma$ be the variety defined by the Pfaffians of $M$ in $\mathbb{P}^7$, and $Y$ the variety cut out by $(x_0,x_1,\Pf_{\hat{1}})$. Note that $Y$ is a component of $\Gamma$, and the Pfaffians of $M$ together with the cubic $H$ generate the colon ideal $(I_{\Gamma} \colon I_{Y})$. Thus $X_1$ is residual to $Y$ in $\Gamma$. On the other hand, $X_2$ is cut out by the ideal $(Q_2,H+yE)$ in $\mathbb{P}^6$, and is thus a $(2,3)$ complete intersection. Intersecting this construction with a general $\mathbb{P}^5$ produces a Gorenstein curve of codimension four, as in \cite{ablett2021halfcanonical}.

\begin{proposition}
There is a flat family over $\mathbb{A}^1$ whose special
fibre is a variety of Type [310]a and whose general fibre is a variety of Type [300a]a.
\end{proposition}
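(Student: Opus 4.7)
The plan is to construct an explicit one-parameter family by perturbing the skew-symmetric matrix $M$ (together with its auxiliary data $L$, $E$) that defines [310]a, inserting a deformation parameter $t \in \mathbb{A}^1$ in such a way that the special fibre at $t=0$ recovers the original [310]a variety and the general fibre, after eliminating redundant generators, is presented in the Cramer's rule format of [300a]a. Counting generators shows the shape of what must happen: [310]a has $3$ quadrics and $5$ cubics as minimal generators, whereas [300a]a has $3$ quadrics and $4$ cubics. Hence exactly one cubic must become a syzygy consequence of the remaining seven generators once $t$ is invertible. Geometrically, the deformation has to glue the two components $X=X_1+X_2$ of the central fibre, forcing the reducible quadrics $Q_0=x_0y$ and $Q_1=x_1y$ to become irreducible on the total space.

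The first step is to perturb only in positions that preserve the Pfaffian symmetry of $M$. A natural candidate is to replace the linear forms $A,B,C,D$ in the first row of $M$ by $t$-perturbed versions $A_t,\ldots,D_t$ carrying additional terms involving $x_2,\ldots,x_7$ (or $y$), so that for invertible $t$ they become general linear forms on $\mathbb{P}^8$ rather than being confined to the ideal $(x_0,x_1)$. Compatibly, the quadrics $Q_0,Q_1,Q_2$ are modified to $Q_0',Q_1',Q_2'$ picking up $t$-terms (absorbing what was previously $\Pf_{\hat 1}$ into a non-factoring quadric), and the extra cubic $H+yE$ is perturbed to keep it in the deformed ideal. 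This mirrors the perturbation strategy used for [562]ai and [441a]ai in Sections \ref{sec2} and \ref{sec3}, where a single zero entry of a skew matrix is replaced by $t$ and the repercussions are propagated through the Pfaffians.

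The central calculation is a syzygy check: verify that for $t$ invertible one of the five deformed cubic relations can be written as a polynomial combination (with $t$ in the denominator) of the other four cubics and the three deformed quadrics. The surviving seven generators should then be re-expressible as $M'v'=0$ and $\bigwedge^3 M'=s'v'$ for an explicit $3\times 4$ matrix $M'$ and vector $v'$ with linear entries, and a quadratic $s'$. The hard part will be locating the correct perturbation so that this Cramer's rule structure emerges cleanly; the obstruction is that an arbitrary placement of $t$ will in general destroy the product identity $\bigwedge^3 M'=s'v'$. I expect the right deformation to be found by viewing $(A,B,C,D)$ and the Pfaffian $\Pf_{\hat 1}$ as contractions of data encoded in some $3\times 4$ auxiliary matrix, then reading off how to insert $t$ so that the whole Cramer's rule package is produced.

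Once the deformation is written down, flatness is immediate. Every fibre is equidimensional and has Betti table either SSY 2 (at $t=0$) or CGKK 4 (for $t$ invertible); both correspond to varieties of the same degree, so the Hilbert polynomial of the fibre $X_t$ is independent of $t$ and the family is flat by the standard criterion of Hartshorne III.9 recalled in the introduction. The verification of the Betti tables at each fibre can be handled by Magma using the same techniques already described, completing the argument.
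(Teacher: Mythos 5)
Your high-level plan is correct and matches the paper's strategy: the generator count ($3+5$ versus $3+4$) pins down exactly what must happen, you correctly anticipate that one cubic becomes a Koszul consequence of the deformed quadrics when $t$ is invertible, and you correctly invoke Hilbert-polynomial constancy for flatness. However, the specific mechanism you propose --- perturbing the entries $A,B,C,D$ in the first row of $M$ --- is not what the paper does and would not directly produce deformations of $Q_0=x_0y$ and $Q_1=x_1y$, since these are \emph{not} Pfaffians of $M$ and cannot be reached by perturbing $M$ alone. You also acknowledge not knowing how to arrange the Cramer's rule structure, which is exactly the step on which the whole proof hinges, so the proposal as written does not yet close the gap.

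The paper's actual deformation leaves the matrix $M$ completely unchanged. Instead it notes that $\Pf_{\hat 5}=(x_5a_0-x_3b_0+x_2c_0)x_0+(x_5a_1-x_3b_1+x_2c_1)x_1$ and deforms the reducible quadrics to $Q_0'=x_0y+t(x_5a_1-x_3b_1+x_2c_1)$ and $Q_1'=x_1y+t(x_5a_0-x_3b_0+x_2c_0)$, chosen precisely so that $t\Pf_{\hat 5}=x_1Q_0'-x_0Q_1'$; this is the Koszul syzygy that eliminates the extra cubic. The cubic Pfaffians $\Pf_{\hat 2},\Pf_{\hat 3},\Pf_{\hat 4}$ are then perturbed by terms involving the auxiliary data $E$ and $L$ (not by a perturbation of $M$), and the ideal generator $H+yE$ is adjusted by subtracting $\frac{1}{t}(d_0Q_0'+d_1Q_1')$ when $t$ is invertible. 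The surviving seven generators are then packaged into the Cramer's rule format with an explicit $3\times 4$ matrix whose rows come from the $x_0$- and $x_1$-coefficients of the entries of $M$, a vector $v=(\tfrac{1}{t}y,x_5,x_3,x_2)^T$, and $s=D+tE$. Finding this particular choice of perturbation --- guided by the decomposition of $\Pf_{\hat 5}$ into $x_0$- and $x_1$-parts and the desire to hit a Koszul relation on the nose --- is the essential content your proposal identifies as missing but does not supply.
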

\begin{proof}
 First we collect the terms of $ \Pf_{\hat{5}}$ involving $x_0$, $x_1$ as follows:
 \begin{equation*}
 \Pf_{\hat{5}} = (x_5a_0 - x_3b_0 + x_2c_0)x_0 + (x_5a_1 - x_3b_1 + x_2c_1)x_1.
 \end{equation*}
 Let $t \in \mathbb{A}^1$ be a deformation parameter. We deform the quadrics $Q_0$ and $Q_1$ to 
 \begin{equation*}
    Q_0' = x_0y + t(x_5a_1 - x_3b_1 + x_2c_1), \quad Q_1' = x_1y + t(x_5a_0 - x_3b_0 + x_2c_0).
\end{equation*}
Then $t\Pf_{\hat{5}} = x_1Q_0' - x_0Q_1'$. Hence if $t$ is invertible, $\Pf_{\hat{5}}$ is no longer needed as an ideal generator. The remaining cubic Pfaffians deform to
\begin{align*}
\Pf_{\hat{2}}' &= {\Pf_{\hat{2}}} + t(x_5E + (b_1c_0 - b_0c_1)L), \\
\Pf_{\hat{3}}' &= {\Pf_{\hat{3}}} + t(x_3E + (a_1c_0 - a_0c_1)L), \\
\Pf_{\hat{4}}' &= {\Pf_{\hat{4}}} + t(x_2E + (a_1b_0 - a_0b_1)L).
\end{align*}
When $t$ is invertible we may also replace the ideal generator $H+yE$ with the polynomial
\[(H+yE)-\frac1t(d_0Q_0'+d_1Q_1').\]
Hence when $t$ is invertible, the remaining three quadrics and four cubic generators fit into the ``Cramer's rule'' format in the following way:
\begin{equation*}
N = 
    \begin{pmatrix}
    x_1 & a_0 & - b_0 & c_0 \\
    x_0 & -a_1 & b_1 & -c_1 \\
    tL & x_4 & -x_6 & x_7
    \end{pmatrix},
\quad v=
    \begin{pmatrix}
    \frac{1}{t}y \\
    x_5 \\
    x_3 \\
    x_2
    \end{pmatrix},
    \quad
    s = D + tE.
\end{equation*}
For invertible $t$, the three quadrics are obtained from $Nv=0$ and the remaining four cubics are given by $\bigwedge^3 N=sv$.

It follows that  $(Q_0',Q_1',Q_2,\Pf_{\hat{2}}',\Pf_{\hat{3}}',\Pf_{\hat{4}}',\Pf_{\hat{5}},H+yE)$ defines a family of codimension four varieties whose general fibre is of Type [300a]a, and whose special fibre at $t=0$ is of Type~[310]a. Since the Hilbert polynomial at every fibre does not depend on $t$, this family is flat.
\end{proof}

\subsection{Family~[331]a}
We now describe a family of varieties with Betti table SSY 5.
\renewcommand{\arraystretch}{1}
\begin{table}[h!]
    \begin{equation*}\begin{array}{c|c c c c c}
        & 0 & 1 & 2 & 3 & 4 \\ \hline
       0 & 1 & - & - & - & - \\
      1 & - & 3 & 3 & 1 &-\\
      2 &- & 7 & 14 & 7 & -\\
      3 &- & 1 & 3 & 3 & - \\
      4 & - & - & - & - & 1
    \end{array}
\end{equation*}
\caption*{Betti table SSY 5~\cite{schenck2020calabiyau}}
\label{tab:table10}
\end{table}
\begin{lemma}
Let $x_0,\dots,x_{12},y$ be coordinates on $\mathbb{P}^{13}$ and let $M$ be the skew-symmetric matrix
    \begin{equation}\label{eq!matrix-[331]}
    M=
    \begin{pmatrix}
    & x_3 & x_4 & x_6 & x_9 & a_0x_0 & 0 \\
    & & x_5 & x_7 & x_{10} & b_1x_1 & 0 \\
    & & & x_8 & x_{11} & 0 & 0 \\
    & & & & x_{12} & 0 & x_0 \\
    & & & & & 0 & x_1 \\
    & & & & & & x_2 \\
    \end{pmatrix}
\end{equation}
where $a_0,b_1$ are scalars.
Let $X\subset\PP^{13}$ be defined by the seven $6\times6$ Pfaffians of $M$, the quadrics
\[Q_0=x_0y, \quad Q_1=x_1y, \quad Q_2=x_2y\]
and the quartic $H=q+yF$ where $q$ is the following combination of the $4\times4$ Pfaffians of $M$:
\begin{equation*}
q={a_0\Pf_{1234}\Pf_{2345}}-{b_1\Pf_{1345}\Pf_{1235}},
\end{equation*}
and $F$ is a general cubic form.

 Then $X$ is a $9$-dimensional Gorenstein codimension four variety with Betti table SSY 5. The intersection of $X$ with a general linear subspace of dimension $5$ is a curve $C$ and we call this family [331]a.
\end{lemma}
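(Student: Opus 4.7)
The plan is to follow the paper's standard strategy: exploit the reducible quadrics to exhibit the geometric decomposition, confirm the dimension on each component, and then verify the Betti table computationally. The Gorenstein property will then follow from the self-dual shape of SSY 5 together with Cohen--Macaulayness.

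Since $Q_0=x_0y$, $Q_1=x_1y$, $Q_2=x_2y$ are reducible, $X$ splits as $X=X_1\cup X_2$ with $X_1\subset\{y=0\}\cong\PP^{12}$ and $X_2\subset\{x_0=x_1=x_2=0\}\cong\PP^{10}$. On $X_2$, the last two rows and columns of $M$ vanish identically, so every $6\times6$ Pfaffian is zero (a skew-symmetric matrix with two zero rows has rank at most four). The $4\times4$ Pfaffians appearing in the quartic $q$ only involve the upper-left $5\times5$ block of $M$, whose entries lie among $x_3,\dots,x_{12}$, so $q$ restricts to a genuine quartic on this locus, and $X_2$ is the hypersurface $q+yF=0$ in $\PP^{10}$, of dimension $9$. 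For $X_1$ the remaining generators are the seven $6\times6$ Pfaffians of $M$ together with $q$; a codimension bound then forces $\dim X_1\le 9$, so $\dim X=9$. This analysis also parallels the Jerry-like block structure already encountered for [310]a in \S\ref{sec!type-[310]}, and I would expect $X_1$ to decompose in a similar way.

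With the dimension pinned down, I would feed the full ideal into Magma and check directly that its Betti table matches SSY 5. This simultaneously confirms Cohen--Macaulayness (projective dimension equals codimension four) and that each listed generator is genuinely minimal. Since SSY 5 is self-dual, $X$ is Gorenstein of codimension four. The curve statement is then formal: a general $\PP^5\subset\PP^{13}$ meets $X$ in a codimension-four subscheme $C\subset\PP^5$, and cutting by a regular sequence preserves the Betti numbers, so $C$ is Gorenstein with Betti table SSY 5. The main obstacle I anticipate is verifying that the quartic $q$ does not lie in the ideal generated by the Pfaffians (which would reduce the Betti numbers and alter the resolution) and conversely that adding $q$ does not cut $X$ down below dimension nine; this is a delicate interplay between the Pl\"ucker-type syzygies among the $4\times4$ Pfaffians of $M$ and the shape $H=q+yF$ of the final generator, and in keeping with the rest of the paper I would settle it through the accompanying Magma computation.
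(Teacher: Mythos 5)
Your proposal follows the paper's route in outline: split $X=X_1\cup X_2$ using the reducible quadrics, identify $X_2$ as the quartic hypersurface $\{q+yF=0\}$ in $\PP^{10}$ (your observation that rows $6$ and $7$ of $M$ vanish on $\{x_0=x_1=x_2=0\}$, killing all $6\times6$ Pfaffians, is exactly right), and delegate the Betti-table check to Magma. The paper's discussion immediately after the lemma gives the same decomposition.

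There is, however, one step in your argument that is wrong as stated. The sentence ``a codimension bound then forces $\dim X_1\le 9$'' has the inequality backwards: Krull's principal ideal theorem bounds codimension \emph{above} by the number of generators, hence gives a \emph{lower} bound on dimension, not an upper one. Eight generators would only tell you $\dim X_1\ge 4$. To get $\dim X_1\le 9$ you need to observe that the seven $6\times6$ Pfaffians of the $7\times7$ skew matrix $M$ already cut out a variety $\Gamma\subset\PP^{12}$ of codimension $3$ (the expected Pfaffian codimension, which holds here because $M$ is a Jerry matrix with suitably generic entries), and that $X_1\subset\Gamma$. The paper goes further and identifies $X_1$ as residual to the linear subspace $l=(x_0,x_1,x_2)$ inside $\Gamma$, with the quartic $q$ together with the seven Pfaffians generating the colon ideal $(I_\Gamma:I_l)$. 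That residuation picture also disposes of the ``main obstacle'' you flagged at the end, namely whether $q$ is redundant: it is a minimal generator of the colon ideal, so it is not in the Pfaffian ideal, and it is what cuts the residual component out of $\Gamma$. Once the dimension argument is repaired, your plan coincides with the paper's.
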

In the language of Brown, Kerber and Reid \cite{brown2012fano} $M$ is a $\text{Jer}_{67}$, with all the $6 \times 6$ cubic Pfaffians of $M$ lying in the ideal $(x_0,x_1,x_2)$. Indeed $M$ is a normal form for the following $\text{Jer}_{67}$ matrix:
\[M'=\begin{pmatrix}
    & x_3 & x_4 & x_6 & x_9 & A & S \\
    & & x_5 & x_7 & x_{10} & B & T \\
    & & & x_8 & x_{11} & C & U \\
    & & & & x_{12} & D & x_0 \\
    & & & & & E & x_1 \\
    & & & & & & x_2 \\
    \end{pmatrix}\]
    where $A,B,C,D,E,S,T,U$ are general in $(x_0,x_1,x_2)$.

In the statement of the lemma, we write $\Pf_{ijkl}$ for the $4 \times 4$ Pfaffian of the skew-symmetric submatrix of $M$ which is obtained by removing those row-columns whose index does not appear in $\{i,j,k,l\}$. Hence,
\begin{align*}
\Pf_{1234} &= x_3x_8 - x_4x_7 + x_5x_6, &
\Pf_{1235} &= x_3x_{11} - x_4x_{10} + x_5x_9, \\
\Pf_{1236} &= b_1x_1x_4 - a_0x_0x_5, &
\Pf_{2345} &= x_5x_{12} - x_7x_{11} + x_8x_{10}, \\
\Pf_{1345} &= x_4x_{12} - x_6x_{11} + x_8x_9, &
\Pf_{1245} &= x_3x_{12} - x_6x_{10} + x_7x_9.
\end{align*}
Then three of the $6\times6$ Pfaffians are
\begin{equation*}
\begin{split}
& \Pf_{\hat{6}} = x_1{\Pf_{1234}} - x_0{\Pf_{1235}}, \\
& \Pf_{\hat{5}} = x_0{\Pf_{1236}} + x_2{\Pf_{1234}}, \\
& \Pf_{\hat{4}} = x_1{\Pf_{1236}} + x_2{\Pf_{1235}}.
\end{split}
\end{equation*}

Since the quadric generators are reducible, $X=X_1+X_2$, where $X_1 \subset \mathbb{P}^{12}_{\left<x_0,\dots,x_{12}\right>}$ and $X_2 \subset \mathbb{P}^{10}_{\left<x_3\dots,x_{12},y\right>}$. Further, let $\Gamma \subset \mathbb{P}^{12}$ be the variety cut out by the seven cubic Pfaffians of $M$, and $l$ be the linear subspace given by the vanishing of $(x_0,x_1,x_2)$. Then the quartic $q$ and seven cubic Pfaffians generate the colon ideal $(I_{\Gamma} \colon I_l)$. Thus $X_1$ is residual to $l$ in $\Gamma$. On the other hand, $X_2 \subset \mathbb{P}^{10}$ is a quartic hypersurface cut out by the polynomial $H=q+yF$.

\begin{proposition}
Let [331]ai denote the special case of [331]a where the cubic $F$ is zero so that $H=q$. There is a flat family over $\mathbb{A}^1$ whose special fibre is a variety of Type~[331]ai and general fibre is a variety of Type~[300a]a.
\end{proposition}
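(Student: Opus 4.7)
The plan is to mimic the deformations carried out earlier in this section and in sections \ref{sec2}, \ref{sec3}: introduce a degree-$0$ parameter $t \in \mathbb{A}^1$, insert $t$ (or a $t$-multiple of a suitable form) into some of the zero entries of the matrix $M$ of \eqref{eq!matrix-[331]}, and correspondingly deform the three quadrics $Q_0,Q_1,Q_2$. By construction the fibre at $t=0$ will then be a general variety of Type~[331]ai, since [331]ai is [331]a with $F=0$. For invertible $t$ one must exhibit enough reductions to descend from the eleven generators of [331]ai to the seven generators of a [300a]a variety, so exactly four generators must become redundant.

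The natural candidates for these four reductions are the three ``Jerry'' cubic Pfaffians $\Pf_{\hat 4},\Pf_{\hat 5},\Pf_{\hat 6}$, which are already exhibited in the lemma as combinations of $x_0,x_1,x_2$ times $4\times 4$ Pfaffians of $M$, together with the quartic $H=q$. A deformation of the form $Q_i \mapsto Q_i + t(\,\cdot\,)$ should render the three Jerry Pfaffians syzygetic consequences of the deformed quadrics, in complete analogy with the reductions carried out in the proofs for [310]a, [441a]ai and [441b]ai. The quartic $q = a_0\Pf_{1234}\Pf_{2345} - b_1\Pf_{1345}\Pf_{1235}$ should then reduce through a Pl\"ucker-style quadratic identity among the $4\times 4$ Pfaffians of $M$, giving the fourth reduction. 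The assumption $F=0$ in [331]ai is essential here, since it places $H$ in the ideal $(x_0,x_1,x_2)$ and thus makes it amenable to the same reduction scheme as the cubic Pfaffians.

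After eliminating these four generators, the surviving three quadrics and four cubics must be reassembled into the Cramer's rule data $(N,v,s)$ for family~[300a]a, with $N$ a $3\times 4$ matrix of linear forms, $v$ a length-$4$ vector of linear forms, and $s$ a quadratic form. This reassembly is the main technical obstacle, since the $\text{Jer}_{67}$ format of $M$ is larger and more rigid than the matrices appearing in sections \ref{sec2} and \ref{sec3}; in particular one must check that the scalars $a_0,b_1$ combine with the remaining linear entries to produce a genuine quadratic parameter $s$ for the Cramer's rule format. Once the reassembly is done, flatness follows at once: every fibre is $9$-dimensional of the same degree, so the Hilbert polynomial is independent of $t$, and we invoke \cite{MR0463157}, III.9. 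The dimension and Betti table checks for the generic fibre can then be performed in Magma, as in the other propositions of this paper.
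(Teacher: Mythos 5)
Your overall plan matches the paper's proof quite closely: deform the quadrics, show that the three Jerry Pfaffians $\Pf_{\hat4},\Pf_{\hat5},\Pf_{\hat6}$ and the quartic $H$ become redundant when $t$ is invertible, and reassemble the surviving three quadrics and four cubics into a Cramer's rule triple $(N,v,s)$. That is exactly what the paper does, and you correctly count that exactly four generators must drop out. Two details, however, are wrong in a way that would block you if you tried to carry the plan through.

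First, the deformation here is \emph{not} achieved by inserting $t$ into zero entries of the $7\times 7$ matrix $M$. The quadrics $Q_0,Q_1,Q_2$ are not Pfaffians of $M$, so perturbing $M$ cannot touch them. Instead one deforms the quadrics directly by adding $t$ times a $4\times4$ Pfaffian of $M$: $Q_0'=x_0y+t\Pf_{1234}$, $Q_1'=x_1y+t\Pf_{1235}$, $Q_2'=x_2y-t\Pf_{1236}$. This choice is calibrated precisely so that in the combinations $x_iQ_j'-x_jQ_i'$ the $x_ix_jy$ terms cancel, leaving $t$ times the corresponding Jerry Pfaffian, e.g.\ $t\Pf_{\hat6}=x_1Q_0'-x_0Q_1'$. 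Second, your explanation of why $H$ becomes redundant is incorrect on both counts. Setting $F=0$ does \emph{not} put $H=q$ in $(x_0,x_1,x_2)$: the $4\times4$ Pfaffians $\Pf_{1234},\Pf_{2345},\Pf_{1345},\Pf_{1235}$ involve only $x_3,\dots,x_{12}$, so $q$ has no term in $(x_0,x_1,x_2)$. Nor is there a purely quadratic Pl\"ucker identity among the $4\times4$ Pfaffians that makes $q$ reducible. The actual mechanism uses the deformed quadrics together with the remaining cubic $\Pf_{\hat7}=a_0x_0\Pf_{2345}-b_1x_1\Pf_{1345}$: one checks that $tH=a_0\Pf_{2345}Q_0'-b_1\Pf_{1345}Q_1'-y\Pf_{\hat7}$, and this identity holds only because the extra $yF$ term has been set to zero. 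Once these two points are fixed, the reassembly into the $3\times4$ matrix $N$, vector $v=(\tfrac1t y,x_3,x_4,x_5)^T$ and parameter $s=-x_2x_{12}$ is mechanical, and the Hilbert polynomial argument closes the proof as you say.
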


\begin{proof}
Let $t \in \mathbb{A}^1$ be a deformation parameter. We first deform the quadrics using $\Pf_{1234}$, $\Pf_{1235}$ and ${\Pf_{1236}}$:

\begin{equation*}
    Q_0' = x_0y + t\Pf_{1234}, \quad Q_1' = x_1y + t\Pf_{1235}, \quad Q_2' = x_2y - t\Pf_{1236}.
\end{equation*}
If $t$ is invertible, then
\begin{equation*}
    t\Pf_{\hat{6}}=x_1Q_0' - x_0Q_1', \quad 
     t\Pf_{\hat{5}} = x_2Q_0' - x_0Q_2', \quad
     t\Pf_{\hat{4}} = x_2Q_1' - x_1Q_2'.
\end{equation*}
It follows that $\Pf_{\hat{4}}, \Pf_{\hat{5}}$ and $\Pf_{\hat{6}}$ are redundant as ideal generators when $t$ is invertible. Moreover,
\[tH=a_0\Pf_{2345}Q_0'-b_1\Pf_{1345}Q_1'-y\Pf_{\hat{7}},\]
and consequently if $t$ is invertible $H$ is also redundant. The remaining four cubic Pfaffians and three quadrics fit into the ``Cramer's rule'' format as follows:
\begin{equation*}
N = 
    \begin{pmatrix}
    x_0 & x_8 & -x_7 & x_6 \\
    x_1 & x_{11} & -x_{10} & x_9 \\
    x_2 & 0 & -b_1x_1 & a_0x_0 \\
    \end{pmatrix},\quad
v = 
    \begin{pmatrix}
    \frac{1}{t}y \\
    x_3 \\
    x_4 \\
    x_5 
    \end{pmatrix}, \quad
    s=- x_2x_{12}.
\end{equation*}
Then the three quadrics $Q_i'$ are given by $Nv = 0$, and the remaining four cubics $\Pf_{\hat{4}}, \Pf_{\hat{5}}$, $\Pf_{\hat{6}}$, $\Pf_{\hat{7}}$ come from $\bigwedge^3 N=sv$.

Thus, the ideal $I=(Q_0',Q_1',Q_2',\Pf_{\hat{1}},\dots,\Pf_{\hat{7}},H)$ defines a flat family of varieties, whose special fibre at $t=0$ is of Type [331]ai and whose general fibre is of Type [300a]a.
\end{proof}

Next we study the varieties which lie between the families [310]a and [331]a. We consider the degeneration of [331]ai to [331]aii, which is obtained by setting the $x_0$ entry in the last column of \eqref{eq!matrix-[331]} to zero and again setting the cubic $F=0$ in $H$. Thus the matrix $M$ becomes
\begin{equation*}
M=
    \begin{pmatrix}
    & x_3 & x_4 & x_6 & x_9 & a_0x_0 & 0 \\
    & & x_5 & x_7 & x_{10} & b_1x_1 & 0 \\
    & & & x_8 & x_{11} & 0 & 0 \\
    & & & & x_{12} & 0 & 0 \\
    & & & & & 0 & x_1 \\
    & & & & & & x_2 \\
    \end{pmatrix}.
\end{equation*}
The construction proceeds as before, with $X$ being defined by the quadrics
\begin{equation*}
Q_0=x_0y, \quad Q_1 = x_1y, \quad Q_2 = x_2y,
\end{equation*} the 7 cubic Pfaffians of $M$
\begin{align*}
\Pf_{\hat{1}} &= {x_2\Pf_{2345}} - b_1x_1^2x_8, \\
\Pf_{\hat{2}} &= {x_2\Pf_{1345}} - a_0x_0x_1x_8, \\
\Pf_{\hat{3}} &= {x_2\Pf_{1245}}  - a_0x_0x_1x_7 + b_1x_1^2x_6, \\
\Pf_{\hat{4}} &= {x_2\Pf_{1235}} - a_0x_0x_1x_5 + b_1x_1^2x_4, \\
\Pf_{\hat{5}} &= x_2\Pf_{1234}, \\
\Pf_{\hat{6}} &= x_1\Pf_{1234}, \\
\Pf_{\hat{7}} &= {a_0x_0\Pf_{2345}}-{b_1x_1\Pf_{1345}},
\end{align*}
and the additional quartic
\begin{equation*}
H=\Pf_{1234}\Pf_{2345},
\end{equation*}
where all $\Pf_{ijkl}$ are as before.
\begin{proposition}
There exists a flat deformation whose central fibre is of Type~[331]aii and whose general fibre is of Type~[310]ai (see the proof for a description of family~[310]ai).
\end{proposition}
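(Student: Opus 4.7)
The plan is to mirror the strategy used in the preceding propositions: introduce a deformation parameter $t\in\mathbb{A}^1$ by replacing a carefully chosen zero entry of the $7\times 7$ matrix $M$ for [331]aii by a scalar or linear multiple involving $t$, and simultaneously perturb the three quadric generators $Q_0,Q_1,Q_2$ (and possibly the quartic $H$) using the relevant $4\times 4$ Pfaffians $\Pf_{1234},\Pf_{1235},\Pf_{1236}$ of $M$, so that when $t$ is invertible the ``large'' Pfaffians $\Pf_{\hat 4},\Pf_{\hat 5},\Pf_{\hat 6}$ and the quartic $H=\Pf_{1234}\Pf_{2345}$ collapse to syzygy-like combinations of the perturbed quadrics. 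The key point is that because the quartic $H$ in [331]aii is literally a product of two $4\times4$ Pfaffians, absorbing $\Pf_{1234}$ into $Q_0$ as $Q_0'=x_0y+t\Pf_{1234}$ (and similarly for $Q_1,Q_2$) gives $tH=\Pf_{2345}Q_0'-y\Pf_{\hat 7}$ or an analogous identity, so $H$ is redundant on the general fibre.

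Next, I would identify the family [310]ai. After the perturbation, the surviving generators should be the three perturbed quadrics $Q_i'$, the four remaining cubic Pfaffians, and one extra cubic (replacing $H$), organised by a ``Cramer's rule'' presentation $(N,v,s)$ analogous to the one used for [310]a in \S\ref{sec!type-[310]}. Since in [331]aii the only entries of $M$ in the $(x_0,x_1,x_2)$ columns are $a_0x_0$ and $b_1x_1$ (so many of the $a_i,b_i,c_i,d_i$ of the [310]a matrix are forced to vanish), the resulting variety fits into a proper subfamily of [310]a, which we take as the definition of [310]ai: a [310]a variety whose $5\times5$ Pfaffian matrix has the corresponding sparsity pattern (for instance $a_1=b_0=c_0=c_1=d_0=0$, with the remaining entries inherited from the surviving block of $M$).

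Flatness then follows from exactly the argument used repeatedly in the paper: the ideal $I_t=(Q_0',Q_1',Q_2',\Pf_{\hat 1},\ldots,\Pf_{\hat 7},H)$ is defined over $\mathbb{A}^1$; by inspection its fibre at $t=0$ is [331]aii and its general fibre is the [310]ai variety just described. Since [331]aii and [310]ai have equidimensional fibres belonging to Betti tables in the same degree-genus stratum of Table~\ref{tab:table12}, the Hilbert polynomial is constant in $t$, so the family is flat by \cite{MR0463157}, III.9. We confirm the Betti table and dimension at each fibre with Magma.

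The main obstacle is the bookkeeping: one has to select the correct zero entry of $M$ in which to place $t$ so that simultaneously (i) three Pfaffians become tautologies upon division by $t$, (ii) $H$ also becomes tautological, and (iii) the remaining four cubic Pfaffians together with $Q_0',Q_1',Q_2'$ and the cubic replacing $H$ assemble into a $3\times4$ matrix $N$ and vector $v$ in the Cramer's rule format satisfying $Nv=0$ and $\bigwedge^3 N=sv$ for an appropriate quadratic $s$. The product structure $H=\Pf_{1234}\Pf_{2345}$ makes this bookkeeping tractable, but identifying the precise sparsity pattern of [310]ai requires matching the surviving entries of $M$ against the generic $A,B,C,D$ of \S\ref{sec!type-[310]} and verifying that the scalar Pfaffian $\Pf_{\hat 1}$ still contributes the correct quadric $Q_2'$ after perturbation.
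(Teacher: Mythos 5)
Your plan has the right overall skeleton (perturb a quadric by a $4\times4$ Pfaffian, use the resulting relations to kill some generators when $t$ is invertible, conclude flatness by constancy of Hilbert polynomial), but it diverges from the paper's argument in several places that are not merely cosmetic.

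First, you propose to deform \emph{all three} quadrics $Q_0,Q_1,Q_2$ by $\Pf_{1234},\Pf_{1235},\Pf_{1236}$, mirroring the $[331]$ai$\to[300$a$]$a deformation. If you do that, you eliminate three Pfaffians and end up with a Cramer's rule presentation, i.e.\ you flow to $[300$a$]$a, not to $[310]$a. To land on $[310]$a you must deform \emph{only} $Q_0\mapsto Q_0'=x_0y+t\Pf_{1234}$; then because in $[331]$aii the Pfaffians $\Pf_{\hat5}=x_2\Pf_{1234}$ and $\Pf_{\hat6}=x_1\Pf_{1234}$ are both multiples of $\Pf_{1234}$, one gets $t\Pf_{\hat5}=x_2Q_0'-x_0Q_2$ and $t\Pf_{\hat6}=x_1Q_0'-x_0Q_1$, together with $ta_0H=a_0\Pf_{2345}Q_0'-b_1\Pf_{1345}Q_1-y\Pf_{\hat7}$, which is exactly the right number of cancellations. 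Deforming two more quadrics would over-reduce the ideal. Second, you are misreading \S\ref{sec!type-[310]}: family $[310]$a is \emph{not} presented in Cramer's rule format. It is the $5\times5$ skew-symmetric (Jerry) format plus three extra quadrics $Q_0,Q_1,Q_2$ and one cubic $H+yE$; the Cramer's rule matrix there belongs to the \emph{proof} that $[310]$a deforms to $[300$a$]$a. Accordingly the correct identification of the general fibre is that $\Pf_{\hat1},\ldots,\Pf_{\hat4}$ together with the quadric $\Pf_{1234}$ are the $4\times4$ Pfaffians of a new $5\times5$ matrix $N$ (with quadratic first row entries $x_2x_9-a_0x_0x_1$, $x_2x_{10}-b_1x_1^2$, $x_2x_{11}$, $x_2x_{12}$), with $Q_1,Q_2,Q_0'$ as the three quadrics and $\Pf_{\hat7}$ as the extra cubic of \eqref{eq![310]-cubic}; this special $N$ is what defines the subfamily $[310]$ai, not a sparsity pattern $a_1=b_0=c_0=c_1=d_0=0$ in the original $[310]$a matrix. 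Third, you say you will ``replace a carefully chosen zero entry of $M$''; in fact the matrix $M$ is left untouched in this deformation --- only $Q_0$ is perturbed. As written, your attempt would not reproduce the target family, so the gap is real: you would need to switch to the single-quadric perturbation and match the surviving generators against the $5\times5$ Pfaffian (not Cramer) format of $[310]$a.
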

\begin{proof}
Introducing a deformation parameter $t \in \mathbb{A}^1$, we first deform $Q_0$ to $Q_0'=x_0y+{t\Pf_{1234}}$. Thus Pfaffians $\Pf_{\hat{5}}$ and $\Pf_{\hat{6}}$ are not required as ideal generators when $t$ is invertible and  
\begin{equation*}
ta_0H = a_0\Pf_{2345}Q_0'-b_1\Pf_{1345}Q_1-y\Pf_{\hat{7}}
\end{equation*}
as before, so that $H$ is also not required.

Moreover, Pfaffians $\Pf_{\hat{1}},\Pf_{\hat{2}},\Pf_{\hat{3}},\Pf_{\hat{4}}$ and the quadric $\Pf_{1234}$ are the $4 \times 4$ Pfaffians of the $5 \times 5$ matrix 
\begin{equation*}
N=
    \begin{pmatrix}
    & x_2x_9-a_0x_0x_1 & x_2x_{10}-b_1x_1^2 & x_2x_{11} & x_2x_{12}  \\
    & & x_3 & x_4 & x_6 \\
    & & & x_5 & x_7 \\
    & & & & x_{8} \\
    \end{pmatrix}.
\end{equation*}
Comparing this with \S\ref{sec!type-[310]}, with  $Q_1$, $Q_2$, $Q_0'$ as the three quadrics, and $\Pf_{\hat{7}}$ as the extra cubic \eqref{eq![310]-cubic}, we see that the general fibre is in the [310]a family. Since $N$ is quite special, we call the resulting subfamily [310]ai.


Thus the ideal $I_t = (Q_0',Q_1,Q_2,\text{Pf}_{\hat{1}},\text{Pf}_{\hat{2}},\text{Pf}_{\hat{3}},\text{Pf}_{\hat{4}},\text{Pf}_{\hat{5}},\text{Pf}_{\hat{6}},\text{Pf}_{\hat{7}},H)$ defines a variety $X_0$ of Type~[331]aii when $t=0$ and a variety $X_{t}$ of Type~[310]ai when $t$ is invertible. Dimension is preserved, so this is a flat family over $\mathbb{A}^1$.
\end{proof}

Let us call the above deformation $\mathcal{X}\to\mathbb{A}^1$. Then the total space $\mathcal{X}$ has three components, each of which dominates $\mathbb{A}^1$ by flatness. The first component $\mathcal{X}_1\to\mathbb{A}^1$ is a trivial deformation whose fibre $X_1$ is the degree $11$ component of a general variety in the [310]ai family. The second component $\mathcal{X}_2\to\mathbb{A}^1$ is again trivial, and its fibre $X_2$ is the quadric hypersurface $(\Pf_{1234}=0)$ in the linear subspace $x_0=x_1=x_2=0$. The last component $\mathcal{X}_3\to\mathbb{A}^1$ is a nontrivial deformation, defined by $(x_0y-t\Pf_{1234}=\Pf_{2345}=0)$ in the linear subspace $x_1=x_2=0$. The general fibre $X_3$ of $\mathcal{X}_3$ is a complete intersection of two quadrics and $X_2+X_3$ is the $(2,3)$ complete intersection $(x_0y-t\Pf_{1234}=x_0\Pf_{2345}=0)$. The central fibre $Y_3=Y_3'+Y_3''$ breaks into two hyperplane sections $Y_3'\colon(y=0)$ respectively $Y_3''\colon(x_0=0)$ of the quadric $(\Pf_{2345}=0)$. 

Returning to the original deformation $\mathcal{X}\to\mathbb{A}^1$, we see that the general fibre $\mathcal{X}_t=X_1+(X_2+X_3)$ is in a degeneration of the [310]a family, which we called [310]ai. Moreover, $\mathcal{X}_0=(X_1+Y_3')+(X_2+Y_3'')$ where $(X_1+Y_3')$ is the degenerate degree $13$ component of a general variety of Type~[331]aii, and $(X_2+Y_3'')$ is a degenerate quartic ear.

\section{Deformations in degree 18}\label{sec5}
In degree 18, we discuss the singular family [210]a with Betti table [210]. We show that a variety in the specialisation [210]ai admits a smoothing to a variety in the [200]a family described in \cite{coughlan2016arithmetically} involving a linear section of the fourfold $\mathbb{P}^2 \times \mathbb{P}^2$.
\subsection{Families~[200]a and [200]b}
Two families [200]a and [200]b are outlined in \cite{coughlan2016arithmetically}. These higher degree varieties are more complicated than those previously outlined, so their descriptions are not as complete. One of the families from \cite{coughlan2016arithmetically} is bilinked to a linear section of $\PP^2\times\PP^2$ in a $4$-dimensional $(2,2,3)$ complete intersection. We call this family [200]a. The other family is bilinked to $\PP^1\times\PP^1\times\PP^1$ and we call this [200]b. 
Both constructions have the CGKK 7/8 Betti table. For curves, there is no difference between [200]a and [200]b. Both families are bilinked to the normal elliptic curve of degree $6$. Thus we may refer to this family of curves as [200].

\renewcommand{\arraystretch}{1}
\begin{table}[h!]
    \begin{equation*}\begin{array}{c|c c c c c}
        & 0 & 1 & 2 & 3 & 4 \\ \hline
       0 & 1 & - & - & - & - \\
      1 & - & 2 & - & - &-\\
      2 &- & 8 & 18 & 8 & -\\
      3 &- & - & - & 2 & - \\
      4 & - & - & - & - & 1
    \end{array}
\end{equation*}
\caption*{Betti table CGKK 7/8~\cite{schenck2020calabiyau}}
\label{tab:table11}
\end{table}

\subsection{Family~[210]a}
The curve $C$ of Type~[210]a was first described in \cite{ablett2021halfcanonical}.
\begin{lemma}
Let $x_0,\dots,x_5,y_0,y_1,z_0,z_1,z_2,w$ be coordinates on $\PP^{11}$ and let $M$ be the skew-symmetric matrix
\begin{equation*}
M = 
    \begin{pmatrix}
    & x_0 & x_1 & x_2 & y_0 & y_1 & 0 \\
    & & x_3 & x_4 & y_1 & ay_0 & 0 \\
    & & & x_5 & 0 & by_1 & y_0 \\
    & & & & 0 & y_0 & y_1 \\
    & & & & & z_0 & z_1 \\
    & & & & & & z_2
    \end{pmatrix}
\end{equation*}
where $a,b$ are scalars.
We define quadrics
\begin{equation*}
    Q_0 = y_0w, \quad Q_1 = y_1w, \quad Q_2=x_0x_5-x_1x_4+x_2x_3,
\end{equation*}
where $Q_2$ is the Pfaffian of the upper left $4 \times 4$ block of $M$. Finally, let $F_0$ be the following cubic:
\begin{align*}
F_0=ab(y_0y_1z_1 &- x_2z_1^2) + a(y_0^2z_0 - x_1z_0z_1 + x_5z_1^2) +
b(y_1^2z_2 + x_0z_1^2 - x_4z_1z_2) \\
  &- x_0z_0^2+ x_4z_0z_1 + x_3z_1^2 + x_2z_0z_2 - x_3z_0z_2 + x_1z_1z_2 - x_5z_2^2 \\
   &- y_0^2z_2 - y_0y_1z_1 - y_1^2z_0.
\end{align*}
Suppose that $X$ be defined by the quadrics $Q_0,Q_1$, the seven cubic $6\times6$ Pfaffians of $M$, the cubic $F_1=F_0+wG$ where $G$ is a quadratic form, and the further cubic $F_2=wQ_2$.

Then $X$ is a Gorenstein codimension $4$ variety with Betti table SSY 1. The intersection of $X$ with a $5$-dimensional linear subspace is a curve $C$ and we call this family of curves [210]a.
\end{lemma}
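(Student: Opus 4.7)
The plan is to decompose $X$ into two components using the reducibility of the quadrics $Q_0$ and $Q_1$, describe each component geometrically, and confirm the Gorenstein codimension-4 property together with the Betti table by Magma computation, as is done throughout the paper.

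Since $Q_0 = y_0 w$ and $Q_1 = y_1 w$, the scheme $X$ splits as $X = X_1 + X_2$, where $X_1$ lies in the hyperplane $V(w) \cong \PP^{10}$ and $X_2$ lies in the codimension-two linear subspace $V(y_0, y_1) \cong \PP^9$. On $V(w)$, the cubic $F_2 = wQ_2$ vanishes identically and $F_1$ restricts to $F_0$, so $X_1$ is cut out by the seven cubic $6 \times 6$ Pfaffians of $M$ together with $Q_2$ (the Pfaffian of the upper-left $4 \times 4$ block) and $F_0$. The matrix $M$ has the Jerry-type shape already used elsewhere in the paper (compare family~[331]a in \S\ref{sec4}): its last column contains $y_0, y_1, z_1, z_2$ in the lower entries, so the cubic Pfaffians lie in the ideal of a suitable linear subspace $\Pi$, and I expect $X_1$ to be residual to $\Pi$ in a simpler scheme built from $Q_2$, the Pfaffians of the $4\times 4$ block, and the cubic $F_0$. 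On the complementary piece $V(y_0, y_1)$, many entries of $M$ vanish, several Pfaffians become reducible, and $X_2$ reduces to a more tractable quartic-type component in $\PP^9$ cut out by what remains together with $F_2 = wQ_2$.

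The main technical obstacle is verifying the precise Betti table SSY 1 and the codimension-4 claim. The mixed-degree entries of $M$ (linear forms together with the scalars $a,b$) combined with the Pfaffian $Q_2$ of the upper-left $4 \times 4$ block, the reducible quadrics $Q_0, Q_1$, and the additional cubics $F_1, F_2$ produce a free resolution that is not easily computed by hand. Rather than grind through the syzygies, I follow the methodology outlined in the introduction and invoke Magma to confirm that the ideal $I_X = (Q_0, Q_1, \Pf_{\hat 1} M, \dots, \Pf_{\hat 7} M, F_1, F_2)$ is Gorenstein of codimension 4 in $\PP^{11}$ with Betti table SSY 1, using the code provided in the accompanying GitHub repository. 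The specific shape of $F_0$, with its carefully balanced terms in $a$, $b$, and the $z_iz_j$, is what ensures the quartic $F_1$ appears as an ideal generator and gives the SSY 1 Betti table rather than a nearby CGKK table.

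Finally, to produce the curve family [210]a, one cuts $X$ by a general $5$-dimensional linear subspace $\PP^5 \subset \PP^{11}$. Since this is a regular linear section of a codimension-4 Cohen--Macaulay scheme, the intersection remains codimension-4 Gorenstein with the same Betti table; a standard Bertini argument preserves dimension, yielding a curve $C \subset \PP^5$ with Betti table SSY 1, as required.
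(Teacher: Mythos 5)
Your overall strategy — split $X = X_1 + X_2$ along the reducible quadrics $Q_0, Q_1$, describe each component geometrically, and delegate the Betti table and codimension check to Magma, then cut by a general $\PP^5$ — is the same as the paper's. However, several of the concrete details of the decomposition are off.

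First, the residuation structure is not to a linear subspace. The paper observes that $M$ is a normal form for a generic $\text{Jer}_{56}$-type matrix whose fifth and sixth row--columns have entries that are generic linear combinations of $y_0, y_1$; this is arranged precisely so that all seven cubic $6 \times 6$ Pfaffians lie in the ideal $(y_0, y_1, Q_2)$, where $Q_2$ is the $4\times4$ Pfaffian of the upper-left block. That ideal defines a \emph{quadric hypersurface} in the codimension-two linear subspace $V(y_0, y_1)$, not a linear subspace $\Pi$. The component $X_1 \subset \PP^{10} = V(w)$ is residual to this quadric inside the scheme $\Gamma$ cut out by the seven Pfaffians, and $F_0$ is exactly the extra generator produced by computing the colon ideal $(I_\Gamma : (y_0, y_1, Q_2))$. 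In particular, $Q_2$ is \emph{not} an ideal generator of $X_1$: upon restricting $I_X$ to $w = 0$, both $Q_0, Q_1$ and $F_2 = wQ_2$ vanish, and only the seven Pfaffians together with $F_0$ remain. Including $Q_2$ as you do in the equations for $X_1$ is incorrect.

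Second, your description of $X_2$ as a ``more tractable quartic-type component'' is wrong. Restricting to $V(y_0, y_1) \cong \PP^9$: the quadrics $Q_0, Q_1$ vanish, the seven $6\times6$ Pfaffians all reduce to multiples of $Q_2$ (since they lie in $(y_0, y_1, Q_2)$), and $F_2 = wQ_2$ becomes redundant once $Q_2$ is in the ideal. What remains are exactly $Q_2$ and $F_1$, so $X_2$ is a $(2,3)$ \emph{complete intersection}, not a quartic. (The quartic ``ears'' appear in the lower-degree families such as [551]a and [562]a, but not here.) Getting this wrong would break the degree count for the curve section and would not match the Betti table SSY 1. With these corrections your argument lines up with the paper's.
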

\renewcommand{\arraystretch}{1}
\begin{table}[h!]
    \begin{equation*}\begin{array}{c|c c c c c}
        & 0 & 1 & 2 & 3 & 4 \\ \hline
       0 & 1 & - & - & - & - \\
      1 & - & 2 & 1 & - &-\\
      2 &- & 9 & 18 & 9 & -\\
      3 &- & - & 1 & 2 & - \\
      4 & - & - & - & - & 1
    \end{array}
\end{equation*}
\caption*{Betti table SSY 1~\cite{schenck2020calabiyau}}
\label{tab:table14}
\end{table}

We start from the following skew-symmetric matrix $M'$
\begin{equation*}
M' = 
    \begin{pmatrix}
    & x_0 & x_1 & x_2 & a_{15} & a_{16} & a_{17} \\
    & & x_3 & x_4 & a_{25} & a_{26} & a_{27} \\
    & & & x_5 & a_{35} & a_{36} & a_{37} \\
    & & & & a_{45} & a_{46} & a_{47} \\
    & & & & & z_0 & z_1 \\
    & & & & & & z_2
    \end{pmatrix}
\end{equation*}
where the $a_{ij}$ are generic linear combinations of $y_0$ and $y_1$ so that the $6\times 6$ cubic Pfaffians of $M'$ lie in the ideal $(y_0,y_1,Q_2)$.
Since we assumed that $M'$ is generic, by performing row-column operations, rescaling and relabelling variables in a prudent manner, we may reduce $M'$ to the normal form $M$ displayed in the lemma.

Since the quadrics $Q_0,Q_1$ are reducible, it follows that $X = X_1 + X_2$ where $X_1$ lies in the copy of $\PP^{10}$ defined by $w=0$, while $X_2$ lies in $\PP^{9}$ defined by $y_0=y_1=0$. 

By construction, $X_1$ is residual to the quadric $(y_0,y_1,Q_2)$ in the variety $\Gamma$ defined by the $6\times6$ Pfaffians of $M$. Thus $X_1\in\PP^{10}$ is defined by the seven cubic Pfaffians, along with the cubic $F_0$, which we obtained by computing the colon ideal $(I_{\Gamma}:(y_0,y_1,Q_2))$. 

The other component $X_2$ is the $(2,3)$ complete intersection defined by $Q_2$ and $F_1$. In \cite{ablett2021halfcanonical}, the quadric $Q_2$ had to be of rank $2$, but here we can have quadrics of rank $6$.

To construct a smoothing to Type~[200], we consider the specialisation to family [210]ai where we assume that $G=0$.
\begin{proposition}
A curve of Type~[210]ai can be deformed to a curve of Type~[200].
\end{proposition}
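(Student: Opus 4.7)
The plan is to mirror the deformation strategy used throughout Sections 2--4. Introduce a deformation parameter $t\in\mathbb{A}^1$ and perturb the skew-symmetric matrix $M$ by replacing some of its zero entries (those in positions $(1,7)$, $(2,7)$, $(3,5)$, $(4,5)$) with $t$-multiples of the coordinates $w$, $z_0$, $z_1$, $z_2$, producing a matrix $M_t$ that remains a ``Jerry-like'' deformation. Simultaneously, deform the quadrics $Q_0=y_0w$ and $Q_1=y_1w$ to $Q_0'=Q_0+t(\cdots)$ and $Q_1'=Q_1+t(\cdots)$, and adjust the cubic $F_2=wQ_2$ accordingly, so that the resulting $6\times 6$ Pfaffians together with $Q_0'$, $Q_1'$, $F_1'$, $F_2'$ still define a codimension four Gorenstein ideal. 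The assumption $G=0$ in the specialisation [210]ai is precisely what frees up the required degrees of freedom to absorb the $t$-terms without overdetermining the system.

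The next step is to exhibit explicit syzygies showing that, for invertible $t$, several generators become redundant. By analogy with the [310]a case of \S\ref{sec!type-[310]}, one expects relations of the shape $t\Pf_{\hat i}M_t=y_0(\cdots)+y_1(\cdots)+w(\cdots)$ allowing two of the cubic Pfaffians of $M_t$ to be removed, together with a further relation of the form $tF_1'=$ combination of $Q_0'$, $Q_1'$ and the remaining Pfaffians making $F_1'$ redundant. After cancelling $t$, the surviving generators should number exactly two quadrics and eight cubics, matching the CGKK 7/8 Betti table.

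The main obstacle is then to recognise that these surviving generators genuinely define a curve in family [200]. Since [200] is characterised via bilinkage to the normal elliptic sextic in a $(2,2,3)$ complete intersection, one must verify either directly (by exhibiting the bilinkage) or indirectly (by identifying a linear section of $\mathbb{P}^2\times\mathbb{P}^2$ inside an auxiliary codimension three variety) that the general fibre is of Type [200]. In practice this will be checked by computing the Betti table of the general fibre in Magma, following the authors' general methodology, and verifying it matches CGKK 7/8.

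Finally, once the ideal $I_t$ is written down explicitly, flatness over $\mathbb{A}^1$ follows from the usual argument: the fibres at $t=0$ and $t$ invertible have the same dimension and degree (hence the same Hilbert polynomial), and the stratification into the two cases exhausts $\mathbb{A}^1$, so by \cite[III.9.9]{MR0463157} the family is flat. The most delicate point is finding the correct simultaneous perturbation of $M$, $Q_0$, $Q_1$ and $F_2$ so that the Pfaffian syzygies remain compatible; I would expect to arrive at it by first writing the perturbation with undetermined coefficients and then using the required redundancy relations to pin them down.
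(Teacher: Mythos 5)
Your outline has the right \emph{shape} (perturb some generators, find syzygies that make generators redundant for $t$ invertible, verify the Betti table and bilinkage in Magma, conclude flatness from constancy of the Hilbert polynomial), but the specifics diverge from what actually works, and in a way that would break the Betti table count.

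\textbf{First, the matrix $M$ is not perturbed at all in this deformation.} Unlike the earlier deformations in \S\ref{sec2}--\S\ref{sec4} (e.g.\ [562]a or [420]a), where a zero entry of the skew matrix is replaced by $t$, here $M$ stays fixed. The key observation the paper exploits is that the four Pfaffians $\Pf_{\hat 4},\dots,\Pf_{\hat 7}$ all lie in $(y_0,y_1)$, so one can write $\Pf_{\hat 4}=Ay_0+By_1$ for explicit quadrics $A$, $B$, and deform only the two reducible quadrics to
\[
Q_0'=y_0w+tB,\qquad Q_1'=y_1w-tA.
\]
The single syzygy $t\Pf_{\hat 4}=y_0Q_1'-y_1Q_0'$ then shows $\Pf_{\hat 4}$ is the one cubic that becomes redundant when $t$ is invertible. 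The cubic $F_2=wQ_2$ is adjusted to $F_2'$ not to be eliminated, but so that the syzygies $y_iF_2=Q_2Q_i$ lift to the deformed ideal.

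\textbf{Second, your generator count is off.} SSY 1 has 2 quadrics and 9 cubics; CGKK 7/8 has 2 quadrics and 8 cubics, so exactly \emph{one} cubic must become redundant. You propose removing two Pfaffians and $F_1'$ — three cubics — which would leave 6 cubics and give the wrong Betti table. In particular, $F_1$ remains a minimal generator (in the specialisation [210]ai one merely sets $G=0$, so $F_1=F_0$; it is not made redundant). The analogy with \S\ref{sec!type-[310]} that you invoke also removes only one Pfaffian ($\Pf_{\hat 5}$), so the proposed ``two removed'' is not supported by that case either. The correct picture is: 11 generators (2 quadrics, 7 Pfaffians, $F_1$, $F_2$) drop to 10 after the single redundancy, matching CGKK 7/8. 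Fixing the perturbation target (quadrics and $F_2$ only, not $M$) and the redundancy count (exactly one Pfaffian) is the essential correction; the remainder of your plan — computing the deformed syzygies, checking the bilinkage to the normal elliptic sextic in Magma, and the Hilbert-polynomial flatness argument — goes through.
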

\begin{proof}
We now construct a flat family over $\mathbb{A}^1$, introducing $t \in \mathbb{A}^1$ as our deformation parameter. Note that the final four Pfaffians, ${\Pf_{\hat{4}}},\dots,{\Pf_{\hat{7}}}$ are all in the ideal $(y_0,y_1)$, so each Pfaffian may therefore be written as $A_iy_0 + B_iy_1$ for appropriate quadrics $A_i$, $B_i$. 

For instance, ${\Pf_{\hat{4}}} = Ay_0 + By_1$ where 
\begin{align}
A&=-ay_0^2+ax_1z_1+x_0z_0+x_3z_2,\\
B&=-bx_0z_1+y_0y_1-x_3z_1-x_1z_2.
\end{align}
If we deform the quadrics to $Q_0' = y_0w + tB$, $Q_1' = y_1w - tA$, then
\begin{equation*}
    t{\Pf_{\hat{4}}} = y_0Q_1' - y_1Q_0'.
\end{equation*}
Thus, when $t$ is invertible $\Pf_{\widehat{4}}$ becomes redundant. We further deform the cubic $F_2=wQ_2$ to 
\begin{multline}
F_2' = wQ_2 - t(ax_1x_2y_0-ax_1^2y_1-bx_0x_4y_0+bx_0x_2y_1+bx_0x_3y_1\\
-x_0x_1y_0-x_3x_4y_0+x_3^2y_1-x_0x_5y_1).
\end{multline}
This extra term is chosen so that the syzygies $y_iF_2= Q_2Q_i$ for $i=0,1$ extend to the deformed ideal. For completeness, the extensions are:
\begin{align*}
y_0F_2' &= Q_2Q_0'+t((bx_0+x_3)\Pf_{\hat{6}}+x_1\Pf_{\hat{7}}), \\
y_1F_2' &= Q_2Q_1'+t(ax_1\Pf_{\hat{6}}+x_0\Pf_{\hat{1}}+x_3\Pf_{\hat{7}}).
\end{align*}

Now consider the ideal 
\begin{equation*}
I_t = (Q_1',Q_2',F_1,F_2',{\text{Pf}_{\hat{1}}},\dots,{\text{Pf}_{\hat{7}}}),
\end{equation*}
defining a variety $X_t$. The fibre over $t=0$ is of Type~[210]i. Moreover, the general fibre is irreducible, reduced, and its ideal is generated by eight cubics and two quadrics, and has Betti table CGKK 7/8. 

Let $C_t$ be a general fibre of the family. We use Magma \cite{MR1484478} to work out the bilinkage of $C_t$ in a $(2,2,3)$ complete intersection. We find that 
$C_t$ is indeed bilinked to a normal elliptic curve of degree $6$. We therefore have a one parameter deformation of curves with general fibre of Type~[200] and special fibre at $t=0$ of Type~[210]ai.
\end{proof}


\paragraph{Acknowledgements}
We thank Miles Reid for his invaluable input throughout this project, as well as Jan Stevens, whose original work on the degree 15 case was a key inspiration for the rest of the deformations in this paper, and who commented on a previous version. We also thank Diane Maclagan for her substantial help in editing this paper. Finally, we thank the referees for their helpful comments which improved the paper. Ablett was funded through the Warwick Mathematics Institute Centre for
Doctoral Training, with support from the University of Warwick and the UK Engineering and Physical Sciences Research Council (EPSRC grant EP/W523793/1).

\begin{bibdiv}
\begin{biblist}*{labels={alphabetic}}
\bibselect{references}
\end{biblist}
\end{bibdiv}

\end{document}